\numberwithin{equation}{section}
\numberwithin{figure}{section}
\theoremstyle{plain}
\newtheorem{thm}{\protect\theoremname}
  \theoremstyle{remark}
  \newtheorem{rem}[thm]{\protect\remarkname}
  \theoremstyle{definition}
  \newtheorem{defn}[thm]{\protect\definitionname}
  \theoremstyle{plain}
  \newtheorem{prop}[thm]{\protect\propositionname}
  \theoremstyle{plain}
  \newtheorem{lem}[thm]{\protect\lemmaname}
\numberwithin{equation}{section}
\numberwithin{figure}{section}
\email{gess@math.tu-berlin.de}
\thanks{{\bf Acknowledgements:}  The author would like to thank Michael Röckner for valuable discussion and comments}
\keywords{fast diffusion equation, singular diffusion, finite time extinction, self-organized criticality}
\subjclass[2010]{Primary: 60H15; Secondary: 34C28,37F99}
  \providecommand{\lemmaname}{Lemma}
  \providecommand{\remarkname}{Remark}
\providecommand{\theoremname}{Theorem}
  \providecommand{\definitionname}{Definition}
  \providecommand{\lemmaname}{Lemma}
  \providecommand{\propositionname}{Proposition}
  \providecommand{\remarkname}{Remark}
\providecommand{\theoremname}{Theorem}
\begin{document}

\title[Finite time extinction for stochastic sign fast diffusion]{Finite time extinction for stochastic sign fast diffusion and self-organized criticality}

\author{Benjamin Gess}

\address{Benjamin Gess \\
Institut für Mathematik, Humboldt-Universität zu Berlin, Germany}
\begin{abstract}
We prove finite time extinction for stochastic sign fast diffusion equations driven by linear multiplicative space-time noise, corresponding to the Bak-Tang-Wiesenfeld model for self-organized criticality. This solves a problem posed and left open in several works: \cite{B13,RW11,BDPR12,BR12b,BDPR09-2,BDPR09-3}. The highly singular-degenerate nature of the drift in interplay with the stochastic perturbation causes the need for new methods in the analysis of mass diffusion and several new estimates and techniques are introduced.
\end{abstract}

\date{\today}

\maketitle

\section{Introduction}

Self-organized criticality (SOC) is a model of complex behavior that has attracted much attention in physics (cf. \cite{BTW88,Z89,BI92,J98,T99-2,CCGS90,DG94,GDG90} among many others). We recall from \cite{BI92}: \textit{The term ``criticality'' refers to the power-law behavior of the spatial and temporal distributions, characteristic of critical phenomena. ``Self-organized'' refers to the fact that these systems naturally evolve into a critical state without any tuning of the external parameters, i.e. the critical state is an attractor of the dynamics. }It is this robust tendency to evolve into a critical state that distinguishes SOC from more classical models of criticality as for example observed in phase-transitions. 

Based on a cellular automaton algorithm, in \cite{BI92} a continuum limit related to the original sand pile model introduced by Bak-Tang-Wiesenfeld (BTW) in \cite{BTW88} was derived, leading to a highly singular-degenerate PDE of the type
\begin{align}
\partial_{t}Z_{t} & \in\D H(Z_{t}-z_{c}),\quad\text{on }[0,T]\times\mcO\label{eq:BTW}\\
0 & \in H(Z_{t}-z_{c}),\quad\text{on }\partial\mcO,\nonumber 
\end{align}
where $H$ is the Heaviside function, $z_{c}$ is the critical state and $\mcO\subseteq\R^{d}$ is a bounded, smooth domain. Rewriting \eqref{eq:BTW} as an equation for $X_{t}=Z_{t}-z_{c}$ leads to 
\begin{align}
\partial_{t}X_{t} & \in\D H(X_{t}),\quad\text{on }[0,T]\times\mcO\label{eq:det_BTW}\\
0 & \in H(X_{t}),\quad\text{on }\partial\mcO.\nonumber 
\end{align}
The effect of robust evolution/relaxation in finite time into a subcritical state can now be recast as finite time extinction of $(X_{t})^{+}$, i.e. $X_{t}\le0$ after some finite time $\tau_{0}$. If we restrict to the relaxation of purely supercritical states (i.e. $Z_{0}\ge z^{c}$ resp. $X_{0}\ge0$) then the relaxation into the critical state corresponds to the extinction of $X_{t}$ in finite time, i.e. $X_{t}\equiv0$ after some finite time $\tau_{0}$. 

As regarding finite time extinction for the related fast diffusion equation, i.e. for
\begin{align}
\partial_{t}X_{t} & \in\D(|X_{t}|^{m-1}X_{t}),\quad\text{on }[0,T]\times\mcO\label{eq:FDE}\\
X_{t} & =0,\quad\text{on }\partial\mcO,\nonumber 
\end{align}
with $m\in(0,1)$ a thorough analysis may be found in \cite{V06} and the references therein. Note that \eqref{eq:BTW} corresponds to to $m\downarrow0$ in \eqref{eq:FDE}. Starting from this, several generalizations have been obtained. For example, recently finite time extinction for fractional fast diffusion equations of the type
\begin{align*}
\partial_{t}X_{t} & =-(-\D)^{\s/2}(|X_{t}|^{m-1}X_{t}),\quad\text{on }[0,T]\times\mcO\\
X_{t} & =0,\quad\text{on }\partial\mcO,
\end{align*}
with $\s\in(0,2)$, $m\in(0,1)$ has been shown in \cite{dPQRV12}. The question of existence of a non-trivial continuation after the extinction time for fast diffusion equations with sink has been solved in \cite{GV97}. In the case of (fractional) fast diffusion equations energy inequalities for $L^{p}$-norms, choosing $p$ large enough, may be used to prove finite time extinction. As we will point out in detail in Section \ref{sub:proof-outline} below, in case of \eqref{eq:det_BTW} this ceases to be true and one has to work with the $L^{1}$-norm instead, thus causing the situation of \eqref{eq:det_BTW} to be quite different from \eqref{eq:FDE}. Finite time extinction for \eqref{eq:det_BTW} has been proven for the first time in \cite{DD79}. In fact, more general equations of the type
\[
\partial_{t}X_{t}\in\D\phi(X_{t})
\]
are treated in \cite{DD79} and a sufficient (assuming $\phi$ to be maximal monotone) and necessary (if $\phi$ is continuous) condition on $\phi$ for finite time extinction is proven. Very recently, an alternative proof of finite time extinction for \eqref{eq:det_BTW}\textit{ }has been given in \cite{B13}. In one spatial dimension a detailed analysis of the dynamics of the total variation flow and thus the sign fast diffusion has been developed in \cite{BF12}. These results are complemented by the present paper by proving finite time extinction for stochastically perturbed versions of \eqref{eq:det_BTW}.

As it has been pointed out in \cite{DG94,GDG98,DG92} it is more realistic to include stochastic perturbations in \eqref{eq:BTW} modeling the energy randomly added to the system, accounting for the removed microscopic degrees of freedom in the continuum limit and reflecting model uncertainty. As pointed out above, the robustness of self-organization in SOC is crucial. Based on this, the question arises whether this robustness with respect to perturbations is actually satisfied by \eqref{eq:BTW}, again leading to the study of stochastically perturbed versions of \eqref{eq:BTW}. Generally speaking, the resulting equations are stochastic partial differential equations (SPDE) of the following type
\begin{align*}
dX_{t} & \in\D H(X_{t})dt+B(X_{t})dW_{t},\quad\text{on }[0,T]\times\mcO\\
0 & \in H(X_{t}),\quad\text{on }\partial\mcO,
\end{align*}
where $B$ are suitable diffusion coefficients. Particular attention (cf. e.g. \cite{RW11,BDPR12,BR12b,BDPR09-2} among others) has been paid to the case of linear multiplicative space-time noise, i.e. to
\begin{align}
dX_{t} & \in\D H(X_{t})dt+\sum_{k=1}^{N}f_{k}X_{t}d\b_{t}^{k},\quad\text{on }[0,T]\times\mcO\label{eq:stoch_BTW}\\
0 & \in H(X_{t}),\quad\text{on }\partial\mcO,\nonumber 
\end{align}
where $X_{0}\ge0$, $N\in\N$, $f=(f_{k})_{k=1,\dots,N}\in C^{2}(\mcO;\R^{N})$ and $\b=(\b^{k})_{k=1,\dots,N}$ is a standard Brownian motion in $\R^{N}$. Again, the key property of robust relaxation of supercritical states ($X_{0}\ge0$) into subcritical ones can be (re-)stated as the problem of finite time extinction: Let
\[
\tau_{0}=\inf\{t\ge0|\ X_{t}(\xi)=0\text{ for a.e. }\xi\in\mcO\}.
\]
Finite time extinction can then be stated as $\P[\tau_{0}<\infty]=1$ for all nonnegative initial values $X_{0}=x\ge0$. 

Despite its fundamental nature, the question of finite time extinction for the stochastic BTW model with linear multiplicative space-time noise \eqref{eq:stoch_BTW} has remained an open problem for several years. The mathematical difficulty of an analysis of the diffusion of mass and finite time extinction for \eqref{eq:stoch_BTW} stems from the highly singular-degenerate nature of the drift $\D H$ and its interplay with the stochastic perturbation. For example, the problem of finite time extinction for \eqref{eq:stoch_BTW} has been posed and left as an open problem in the works \cite{B13,RW11,BDPR12,BR12b,BDPR09-2,BDPR09-3}. The main purpose of the present paper is to resolve this issue by proving finite time extinction for \eqref{eq:stoch_BTW}, without any restriction on the dimension $d$ of the underlying domain $\mcO\subseteq\R^{d}$.

In order to develop a finer analysis of the diffusion of mass for \eqref{eq:stoch_BTW} it turns out to be crucial to work in a pathwise setting, i.e. we base our analysis on a transformation of \eqref{eq:stoch_BTW} into a random PDE which in turn may be analyzed for each fixed Brownian path $t\mapsto\b_{t}(\o)$. In the above mentioned works weaker results proving finite time extinction only with positive probability could be obtained. That is, it could be shown that the measure of Brownian paths for which finite time extinction occurs is non-zero (cf. Section \ref{sub:review} below). The pathwise approach pursued in this paper allows a detailed understanding of the relation between the behavior of Brownian paths and finite time extinction. This leads to a better understanding why so far only finite time extinction with positive probability could be shown and finally leads to a proof of finite time extinction $\P$-almost surely.

\subsection{Overview of known results\label{sub:review}}

While finite time extinction for the stochastic BTW model could not be proven so far, important progress concerning the (stochastic) Zhang model, i.e. for 
\begin{align*}
dX_{t} & \in\D(H(X_{t})(1+\d X_{t}))dt+B(X_{t})dW_{t},\quad\text{on }[0,T]\times\mcO\\
0 & \in H(X_{t}),\quad\text{on }\partial\mcO,
\end{align*}
with $\d>0$ and partial results for the BTW case have been obtained in recent years. Before giving a short overview of these results we will point out a key mathematical difference between the Zhang and the BTW model. 

We (informally) compute
\[
\D\phi(X)=\div\left(\phi'(X)\nabla X\right)=\phi'(X)\D X+\phi''(X)|\nabla X|^{2},
\]
where 
\begin{equation}
\phi(r)=\begin{cases}
1+\d r & ,\text{ if }r>0\\
{}[0,1] & ,\text{ if }r=0\\
0 & \text{, if }r<0
\end{cases}\label{eq:SOC_nonlinearity-1}
\end{equation}
with $\d=0$ in the BTW, $\d>0$ in the Zhang model. Since we are dealing with nonlinearities being singular at zero (cf. \eqref{eq:SOC_nonlinearity-1}) the coercivity coefficient $\phi'(X)$ is singular at zero thus causing fast diffusion of mass for small values of $X$. As we will see below, this singularity is responsible for the effect of finite time extinction. On the other hand $\phi'(X)$ may degenerate for large values of $X$ making it difficult to control the diffusion of mass when $X$ is large. While for fast diffusion equations (FDE)
\[
\phi(r)=r^{m}\sgn(r),\ m\in[0,1),
\]
and the Zhang model the diffusion coefficient $\phi'(r)$ is non-degenerate at least locally in $r$, the BTW model ($\phi'(r)=\delta_{0}$) is highly degenerate making the analysis of mass diffusion and thus the proof of finite time extinction much harder. On the other hand, we note that the arguments presented in this paper depend on the simple structure of the nonlinearity in the BTW model ($\phi=H$) and the methods do not seem to directly extend to fast diffusion equations.

We will now give a brief overview of the known results concerning finite time extinction for the stochastic BTW and Zhang model. Existence and uniqueness of solutions to multivalued SPDE of the type%
\footnote{In fact, in \cite{BDPR09-2} the diffusion coefficients $f_{k}$ were supposed to be of the special form $\mu_{k}e_{k}$ with $\mu_{k}\in\R$ and $e_{k}$ being eigenvalues of $-\D$. However, this does not seem to be crucial for the methods developed in \cite{BDPR09-2}.%
}
\begin{eqnarray}
dX_{t} & \in & \D\phi(X_{t})dt+\sum_{k=1}^{N}f_{k}X_{t}d\b_{t}^{k},\quad\text{on }[0,T]\times\mcO\label{eq:stoch_SOC-1}\\
0 & \in & \phi(X_{t}),\quad\text{on }\partial\mcO,\nonumber 
\end{eqnarray}
with $f_{k}\in H_{0}^{1}(\mcO)$ being sufficiently smooth and $\phi:\R\to2^{\R}$ being a maximal monotone, multivalued function satisfying a polynomial growth condition, has been first shown in \cite{BDPR09-2} in dimension $d\le3$. This includes FDE, the Zhang model and the BTW model. As a further result, positivity preservation (i.e. $X_{t}\ge0$ if $x_{0}\ge0$) has been proved in \cite{BDPR09-2}.

We define 
\[
\tau_{0}(\o):=\inf\{t\text{\ensuremath{\ge}}0|X_{t}(\o)=0,\ \text{a.e. in }\mcO\}.
\]
By a supermartingale argument it has been proved in \cite{BDPR12} that $X_{t}=0$, $d\xi$-a.e. for all $t\ge\tau_{0}$, $\P$-almost surely. This also follows from the results given in Section \ref{sec:exp_decay} below. As concerning finite time extinction we distinguish the following concepts:
\begin{description}
\item [{\textmd{(F1)}}] Extinction with positive probability for small initial conditions: $\P[\tau_{0}<\infty]>0$, for small $X_{0}=x_{0}$.
\item [{\textmd{(F2)}}] Extinction with positive probability: $\P[\tau_{0}<\infty]>0$, for all $X_{0}=x_{0}$.
\item [{\textmd{(F3)}}] Finite time extinction: $\P[\tau_{0}<\infty]=1$, for all $X_{0}=x_{0}$.
\end{description}
While from a mathematical viewpoint also the (weaker) properties (F1), (F2) are interesting, the robustness of the relaxation into subcritical states in SOC is fundamental in physics and thus mainly (F3) is relevant from the SOC point of view.

In order to prove (F1), in \cite{BDPR09-2} some coercivity/non-degeneracy of the diffusion had to be assumed, i.e. $\phi'\ge\d>0$. As applied to SOC this corresponds to restricting to the Zhang model. Under this assumption and restricting to $\mcO=[\text{0,\ensuremath{\pi}}]$, (F1) has been shown in \cite{BDPR09-2}. In the subsequent work \cite{BDPR09-3}, for FDE the restriction to one space dimension was relaxed to $d\in\N$ as long as $m\in[\frac{d-2}{d+2},1).$ 

More recently, the BTW model was considered in \cite{BR12b} for $d\le3$ where \textit{asymptotic extinction} was shown, i.e. 
\[
\int_{0}^{\infty}|\mcO\setminus\mcO_{0}^{t}|dt<\infty,\quad\P\text{-a.s.},
\]
where $|\cdot|$ is the Lebesgue measure and $\mcO_{0}^{t}=\{\xi\in\mcO|X_{t}(\xi)=0\}$. Note that (F3) implies asymptotic extinction. Moreover, assuming a non-degeneracy condition for the noise (i.e. $\sum_{k=1}^{N}f_{k}^{2}>0$) an exponential decay property of $X_{t}$ was shown (cf. also Section \ref{sec:exp_decay} below, where this result is improved).

The survey article \cite{BDPR12} revisits the results obtained in \cite{BDPR09-2,BDPR09-3,BR12b} and some technical assumptions are relaxed. In particular, the non-degeneracy condition on $\phi$ required in \cite{BDPR09-2} is dropped, thus proving (F1) for the BTW model for $d=1$. 

In \cite{RW11} a general class of processes $X_{t}$ is analyzed, merely satisfying a certain energy inequality and extinction properties for such $X_{t}$ are shown. Applied to equations of type \eqref{eq:stoch_SOC-1} this allows for several generalizations, e.g. replacing the Laplacian $\D$ by its fractional powers $-(-\D)^{\a}$ with $\a\in(0,1)$ as also studied in \cite{dPQRV12}. Concerning SOC, one of the main results obtained in \cite{RW11} is (F3) for the Zhang model, while for the BTW model still only (F1) for $d=1$ could be shown. 

If we do not insist on nonnegativity of solutions it makes sense to consider random perturbations of additive type, i.e.
\begin{align}
dX_{t} & \in\D\phi(X_{t})dt+dW_{t},\quad\text{on }[0,T]\times\mcO\label{eq:SOC_additive_noise}\\
0 & \in\phi(X_{t}),\quad\text{on }\partial\mcO,\nonumber 
\end{align}
where $W_{t}$ is an appropriate Wiener process. In fact, this additive type of noise has been suggested in the physics literature (cf. e.g. \cite{DG94,GDG98}). SPDE of the form \eqref{eq:SOC_additive_noise} (actually also allowing more general, multiplicative noise $B(X_{t})dW_{t}$) have been considered in \cite{GT11} where the existence and uniqueness of solutions (for all $d\in\N$) as well as ergodicity (for $d=1$ and additive noise) has been shown. Based on the results developed in \cite{G13-4} one may expect that this implies the existence of a random attractor consisting of a single random point, which we expect to prove in subsequent work.

At last we should mention the very recent work \cite{BR13} on the related stochastic total variation flow, where (F1) has been shown in dimensions $d\le3$. 

In conclusion, despite the large amount of works addressing finite time extinction for the stochastic BTW model it has remained an open question up to now whether this happens with probability one. In this paper we solve this problem by proving (F3) for the stochastic BTW model with underlying bounded domain $\mcO\subseteq\R^{d}$ for all $d\ge1$.

\subsection{Main result and outline of the proof}

We will now state our main result in more detail and give a brief, informal overview of our approach. In the following let $\mcO\subseteq\R^{d}$ be an open domain with smooth boundary $\partial\mcO$, $f=(f_{k})_{k=1,\dots,N}\in C^{2}(\mcO;\R^{N})$ and $\b=(\b^{k})_{k=1,\dots,N}$ be a standard Brownian motion in $\R^{N}$. As above, we restrict to nonnegative initial conditions (and thus to nonnegative solutions) so that the stochastic BTW model may equivalently be written as
\begin{align}
dX_{t} & \in\D\sgn(X_{t})dt+\sum_{k=1}^{N}f_{k}X_{t}d\b_{t}^{k},\quad\text{on }[0,T]\times\mcO\label{eq:BTW_SOC}\\
0 & \in\sgn(X_{t}),\quad\text{on }\partial\mcO,\nonumber 
\end{align}
with $X_{0}=x_{0}$, where $\sgn$ denotes the maximal monotone extension of the sign function. 

We set $\mu_{t}:=-f\cdot\b_{t}=-\sum_{k=1}^{N}f_{k}\b_{t}^{k}$, $\td\mu:=\frac{1}{2}|f|^{2}=\frac{1}{2}\sum_{k=1}^{N}f_{k}^{2}$ and we consider the transformation $Y_{t}:=e^{\mu_{t}}X_{t}$. An informal calculation shows
\begin{equation}
\partial_{t}Y_{t}\in e^{\mu_{t}}\D\sgn(Y_{t})-\td\mu Y_{t}.\label{eq:TPME-intro-1}
\end{equation}
The analysis of \eqref{eq:BTW_SOC} presented in this paper will be essentially based on an analysis of \eqref{eq:TPME-intro-1}. A rigorous justification of this transformation will be given in Section \ref{sec:tranformation} below. 

As we will see in Section \ref{sec:FTE}, a mild condition on the decay of the mass of the level sets of $\td\mu$ (e.g. $|\{\xi\in\mcO|0<\td\mu(\xi)<\ve\}|\lesssim\ve^{\d}$ for all $\ve>0$ small enough and some $\d>0$) implies
\begin{enumerate}
\item [(H)]: For all $p\ge1$ there is a $t_{0}=t_{0}(p,\o)$ such that 
\[
\int_{\mcO}e^{p(-\mu_{t}-\td\mu t)}d\xi=\int_{\mcO}e^{p\sum_{k=1}^{N}f_{k}(\xi)\b_{t}^{k}-\frac{1}{2}f_{k}^{2}(\xi)t}d\xi\le C(p,\o)<\infty,
\]
for $\P$-a.a $\o\in\O$ and all $t\ge t_{0}$. 
\end{enumerate}
Note that, in particular, the cases of vanishing noise ($\td\mu\equiv0$) and full noise ($\td\mu>0$) are trivially covered by the above condition. 

Roughly speaking, our main result is
\begin{thm}[Finite time extinction]
\label{thm:main-intro}Assume that (H) is satisfied. Let $x_{0}\in L^{\infty}(\mcO)$, $X$ be the corresponding solution to \eqref{eq:BTW_SOC} and set
\[
\tau_{0}(\o)=\inf\{t\ge0|X_{t}(\o)=0,\text{ for a.e. }\xi\in\mcO\}.
\]
Then finite time extinction holds, i.e.
\[
\P[\tau_{0}<\infty]=1.
\]
\end{thm}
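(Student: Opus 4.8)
The plan is to work entirely with the transformed random PDE \eqref{eq:TPME-intro-1} for a fixed Brownian path $\omega$ satisfying (H), and to prove finite time extinction there; by the rigorous justification of the transformation (Section \ref{sec:tranformation}) this transfers back to \eqref{eq:BTW_SOC}, since $Y_t = e^{\mu_t} X_t$ vanishes exactly when $X_t$ does. So fix $\omega$ and consider
\[
\partial_t Y_t \in e^{\mu_t}\Delta\sgn(Y_t) - \td\mu\, Y_t , \qquad Y_t = 0 \text{ on } \partial\mcO ,
\]
with $Y_0 = x_0 \ge 0$, hence $Y_t \ge 0$ by positivity preservation. The key quantity to track is the $L^1(\mcO)$-mass $\|Y_t\|_{L^1} = \int_\mcO Y_t\, d\xi$, following the remark in Section \ref{sub:proof-outline} that for $\phi = H$ one is forced to work with the $L^1$-norm rather than $L^p$ for large $p$.

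First I would derive a differential inequality for the mass. Testing the equation (formally, against $\mathbbm 1$, or rigorously against a suitable approximation and using that $\sgn(Y_t) \in [0,1]$ with $\sgn(Y_t) = 1$ on $\{Y_t > 0\}$) gives, after integrating the Laplacian by parts and picking up a boundary term from $\partial\mcO$,
\[
\frac{d}{dt}\int_\mcO Y_t\, d\xi \;=\; e^{\mu_t}\int_{\partial\mcO} \partial_n \sgn(Y_t)\, d\sigma \;-\; \int_\mcO \td\mu\, Y_t\, d\xi \;\le\; -\, e^{\mu_t}\, c\, \bigl|\{\xi \in \mcO : Y_t(\xi) > 0\}\bigr| \;-\; \int_\mcO \td\mu\, Y_t\, d\xi
\]
for some geometric constant $c>0$ depending only on $\mcO$ (the diffusion term strictly decreases the support of a nonnegative solution; this is the mechanism behind finite time extinction for the deterministic sign diffusion, cf. \cite{DD79,B13}). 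The second term is handled by splitting $\mcO$ into $\{\td\mu = 0\}$ and $\{\td\mu > 0\}$ and using that $\td\mu$ is bounded; it can only help or be absorbed.

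Next I would convert this into a closed inequality for $m(t) := \|Y_t\|_{L^1}$. The obstacle is that the right-hand side involves the measure of the positivity set, not $m(t)$ itself, and a priori $m(t)$ can be small while the support is still all of $\mcO$. Here is where condition (H) enters: it controls $\int_\mcO e^{-p(\mu_t + \td\mu t)}\, d\xi$ for every $p$, and hence, by Hölder with the weight $e^{\mu_t}$ reinstated, it lets one bound $m(t)$ from above in terms of a power of $|\{Y_t > 0\}|$ (a Poincaré-type inequality on the positivity set combined with the $L^\infty$-bound, which is propagated because $e^{\mu_t}\Delta\sgn(Y_t)$ is order-preserving and $\td\mu \ge 0$ forces decay of $\|Y_t\|_{L^\infty}$). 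Combining, one obtains a differential inequality of the form
\[
\frac{d}{dt} m(t) \;\le\; -\, \kappa(t)\, m(t)^{\theta}, \qquad 0 < \theta < 1,
\]
where $\kappa(t) = C(\omega)^{-1} e^{\mu_t}$ (up to the $\td\mu$ contributions) and $\theta$ depends on the decay exponent in (H) and the dimension $d$. The sublinear power $\theta < 1$ is exactly what produces extinction in finite time: integrating, $m(t)^{1-\theta} \le m(0)^{1-\theta} - (1-\theta)\int_0^t \kappa(s)\, ds$, so $m$ must hit zero by the first time $T^*$ at which $\int_0^{T^*}\kappa(s)\, ds = m(0)^{1-\theta}/(1-\theta)$.

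Finally I would check that $T^* < \infty$ for $\P$-a.a.\ $\omega$. This is where the interplay with the Brownian path matters and is, I expect, the genuinely delicate point: $\kappa(s) = C(\omega)^{-1} e^{\mu_s(\xi\text{-dependent})}$ involves $e^{\mu_s} = e^{-f\cdot\beta_s}$, and although each $e^{\mu_s}$ is an a.s.\ finite positive random variable, one needs $\int_0^\infty \kappa(s)\, ds = \infty$ a.s., i.e.\ the (time-averaged, spatially suitable) exponential of Brownian motion must not be integrable at infinity. This is precisely the phenomenon that in earlier works only gave extinction with positive probability: one must use the full strength of (H), which guarantees $\int_\mcO e^{p(-\mu_t - \td\mu t)}\, d\xi \le C(p,\omega)$ for all large $t$, to get a lower bound on $\kappa(s)$ that is summable-from-below along the path (e.g.\ by Jensen's inequality $e^{\mu_t} \ge |\mcO| \bigl(\int_\mcO e^{-\mu_t}\, d\xi\bigr)^{-1}$, the $p=1$ case of (H) with the $\td\mu t$ term reabsorbed gives $\int e^{-\mu_t}\, d\xi \le C(\omega) e^{\|\td\mu\|_\infty t}$, hence $e^{\mu_t} \gtrsim e^{-\|\td\mu\|_\infty t}/C(\omega)$ — still not integrable, so one refines by localizing to $\{\td\mu \text{ small}\}$, which is where the level-set decay hypothesis is used). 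Assembling these pieces yields $\tau_0(\omega) \le T^*(\omega) < \infty$ for $\P$-a.a.\ $\omega$, which is the claim.
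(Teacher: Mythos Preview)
Your proposal has the right overall architecture (transform to $Y$, derive a sublinear differential inequality for an $L^1$-type norm, conclude extinction), but there are two genuine gaps where the paper's mechanism differs from what you wrote.

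\textbf{First gap: the energy inequality has a perturbative term you dropped.} When you test the transformed equation against a weight $\varrho$, you do not simply get $-e^{\mu_t}c|\{Y_t>0\}|$ on the right. Note that $\mu_t$ depends on $\xi$, so it cannot be pulled outside the spatial integral. The paper tests against $\vp e^{-\mu_s}$ where $\vp$ solves $\Delta\vp=-1$, $\vp=1$ on $\partial\mcO$, and obtains (cf.\ \eqref{eq:FTE-basic-ineq})
\[
\partial_t\int_\mcO e^{-\mu_s}\vp|Y_t|\,d\xi = -\int_\mcO e^{\mu_r-\mu_s}\vp|\nabla\eta_r|^2\,d\xi + \tfrac12\int_\mcO \eta_r^2\,\Delta\bigl(e^{\mu_r-\mu_s}\vp\bigr)\,d\xi.
\]
The second term is the trace of the space-time noise and can be positive; it does not go away by absorbing into $\td\mu$. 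This is exactly the obstruction that previously limited results to extinction with positive probability. Your inequality as written is missing this term.

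\textbf{Second gap: the divergence of $\int_0^\infty\kappa(s)\,ds$ fails, and the fix is not localization in $\xi$.} You correctly diagnose that your lower bound $\kappa(s)\gtrsim e^{-\|\td\mu\|_\infty s}$ is integrable at infinity, hence useless. The paper's resolution is not to refine the lower bound on $\kappa$ along the whole half-line. Instead, it exploits a path property of Brownian motion (Lemma \ref{lem:constant_BM}): for every $\ve>0$ and every length $n$, $\P$-a.s.\ there exist arbitrarily late intervals $[s,t]$ of length $n$ on which $\sup_{r\in[s,t]}|\b_r-\b_s|<\ve$. On such an interval, $\|\mu_r-\mu_s\|_{C^2(\mcO)}$ is small, so $\Delta(e^{\mu_r-\mu_s}\vp)\le0$ and the perturbative term can be \emph{dropped}; simultaneously $\inf_\xi e^{\mu_r-\mu_s}\ge\tfrac12$, so $g(r)$ is bounded below by a constant depending only on $\|x_0\|_{p+\tau}$ via the $L^p$ bound from Step~1. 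Since the interval can be taken arbitrarily long, the sublinear ODE (Lemma \ref{lem:ODE}) forces the weighted $L^1$ norm to zero within that interval. Hypothesis (H) is used not to lower-bound $\kappa$ directly, but to ensure that the starting value $\int_\mcO e^{-\mu_s}\vp|Y_s|\,d\xi$ at the left endpoint of the good interval is uniformly bounded (via H\"older and $\int e^{p\td\mu t}|Y_t|^p\le\int|y_0|^p$), so the ODE has a uniformly bounded initial condition regardless of how late $s$ is.

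In short, the missing idea is: restrict to random ``near-constancy'' intervals of $\b$ where the bad term disappears and the good term is uniformly effective, then use that such intervals exist of any prescribed length.
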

\begin{rem}
In Theorem \ref{thm:main-intro} we restrict to essentially bounded initial conditions for simplicity. In fact, as we will see in Section \ref{sec:FTE} the extinction time $\tau_{0}$ can be bounded in terms of appropriate $L^{p}(\mcO)$ norms of $x_{0}$ (with $p$ depending on the dimension $d$). Due to continuity of $X_{t}$ in the initial condition this is easily seen to imply finite time extinction for $x_{0}\in L^{p}(\mcO)$ as well. 
\end{rem}

\begin{rem}[Spatially homogeneous noise]
Assume that the functions $f_{k}$ are constant. Then define $F(t)=\int_{0}^{t}e^{\mu_{r}+\td\mu r}dr$, $G(t)=F^{-1}(t).$ An informal computation suggests that $u_{t}:=Y_{G(t)}$ solves
\[
\partial_{t}u\in\D\sgn(u),
\]
i.e. the case of spatially homogeneous noise may entirely be reduced to the deterministic situation, for which finite time extinction has been first proven in \cite{DD79} (cf. also \cite{B13} for a more recent approach). The informal computation introduced above may be made rigorous by first considering non-degenerate, non-singular, smooth approximations (as we will do below, cf. Section \ref{sec:construction} below) for which the transformation follows from the classical chain-rule.

The same remark applies to \eqref{eq:BTW_SOC} if the stochastic part is given in Stratonovich form by choosing $F(t)=\int_{0}^{t}e^{\mu_{r}}dr$. 
\end{rem}

\subsubsection{Outline of the poof\label{sub:proof-outline}}

As layed out above, our analysis is based on the transformed equation \eqref{eq:TPME-intro-1}. The proof consists of two main ingredients:
\begin{enumerate}
\item A uniform control on $\|X_{t}\|_{p}$ for all $p\ge1$.
\item An energy inequality for a weighted $L^{1}$ norm of  $Y_{t}$.
\end{enumerate}
While on an intuitive level the arguments used in this paper to prove finite time extinction become quite clear by considering an approximation of the $\sgn$ function by $r^{[m]}:=|r|^{m-1}r$ ($m\downarrow0$) it is necessary to choose a more complicated, non-singular, non-degenerate approximation in the rigorous proof. Therefore, we start by giving an informal outline of the proof based on $r^{[m]}\to\sgn$. 

\textbf{Step 1:} A uniform control on $\|X_{t}\|_{p}$ for all $p\ge1$

Let $Y_{t}$ be the solution to
\[
\partial Y_{t}\in e^{\mu_{t}}\D Y_{t}^{[m]}-\td\mu Y_{t},
\]
for some $m>0$. Then we may informally compute
\begin{align}
\partial_{t}\int_{\mcO}e^{p\td\mu t}|Y_{t}|^{p}d\xi= & p\int_{\mcO}e^{p\td\mu t}Y_{t}{}^{[p-1]}e^{\mu_{t}}\D Y_{t}{}^{[m]}d\xi\nonumber \\
= & -p\int_{\mcO}e^{\mu_{t}+p\td\mu t}\nabla Y{}^{[p-1]}\nabla Y_{t}^{[m]}d\xi-p\int_{\mcO}Y_{t}^{[p-1]}\nabla e^{\mu_{t}+p\td\mu t}\nabla Y_{t}^{[m]}d\xi\nonumber \\
= & -(p-1)mp\int_{\mcO}e^{\mu_{t}+p\td\mu t}|Y_{t}|^{p-2+m-1}|\nabla Y_{t}|^{2}d\xi\nonumber \\
 & -pm\int_{\mcO}Y_{t}{}^{[p-1+m-1]}\nabla e^{\mu_{t}+p\td\mu t}\nabla Y_{t}d\xi\nonumber \\
= & -\frac{4(p-1)mp}{(p+m-1)^{2}}\int_{\mcO}e^{\mu_{t}+p\td\mu t}\left(\nabla|Y_{t}|^{\frac{p+m-1}{2}}\right)^{2}d\xi\label{eq:ineq-m-approx}\\
 & -\frac{pm}{p+m-1}\int_{\mcO}\nabla|Y_{t}|^{p+m-1}\nabla e^{\mu_{t}+p\td\mu t}d\xi\nonumber \\
= & -\frac{4(p-1)mp}{(p+m-1)^{2}}\int_{\mcO}e^{\mu_{t}+p\td\mu t}\left(\nabla|Y_{t}|^{\frac{p+m-1}{2}}\right)^{2}d\xi\nonumber \\
 & +\frac{pm}{p+m-1}\int_{\mcO}|Y_{t}|^{p+m-1}\D e^{\mu_{t}+p\td\mu t}d\xi,\nonumber 
\end{align}
for all $p\ge1$. Taking $p>1$ and then $m\to0$ we may ``deduce'' from this
\[
\partial_{t}\int_{\mcO}e^{p\td\mu t}|Y_{t}|^{p}d\xi\le0.
\]
Note that for fix $m>0$ this does not follow, since the second term in \eqref{eq:ineq-m-approx} does not vanish. This is the reason why our analysis applies to the BTW model only and not to general fast diffusion equations with $m>0$. In order to turn the above bound on $Y$ into a bound on $X$ we need to control the amount of energy added to the system by the random perturbation. Assuming a mild decay condition on the level sets of $\td\mu$ (cf. Remark \ref{rmk:hyp_noise} below) we obtain that condition (H) is satisfied. This implies
\begin{align}
\int_{\mcO}|X_{t}|^{p}d\xi & =\int_{\mcO}e^{p(-\mu_{t}-\td\mu t)}e^{p\td\mu t}|Y_{t}|^{p}d\xi\nonumber \\
 & \le C_{1}\left(\int_{\mcO}e^{(p+\tau)\td\mu t}|Y_{t}|^{(p+\tau)}d\xi\right)^{\frac{p}{p+\tau}}\label{eq:intro-x-lp-bound}\\
 & \le C_{1}\left(\int_{\mcO}|x_{0}|^{(p+\tau)}d\xi\right)^{\frac{p}{p+\tau}},\nonumber 
\end{align}
i.e.
\[
\|X_{t}\|_{p}\le C_{1}\|x_{0}\|_{p+\tau},
\]
for all $p\ge1$, $\tau>0$, $t\ge t_{0}=t_{0}(p,\tau,\o)$, with $C_{1}=C_{1}(p,\tau,\o)$.
\begin{rem}

\begin{enumerate}
\item While we obtain a uniform bound on each $L^{p}$-norm of $X_{t}$ for large times $t\ge t_{0}$, we do \textit{not} obtain such a uniform bound on the $L^{\infty}$-norm of $X_{t}$ since the geometric Brownian motions $t\mapsto e^{-\mu_{t}(\xi)-\td\mu(\xi)t}$ are \textit{not} necessarily pathwise uniformly bounded in $\xi\in\mcO$. As compared to the deterministic case, this leads to additional difficulties in the proof of finite time extinction.
\item In the derivation of the $L^{p}$ bound of $X_{t}$ presented above, we use that the noise is given in Itô form. It is due to the Itô correction term $\td\mu$ in \eqref{eq:TPME-intro-1} that we may uniformly control $\int_{\mcO}e^{p\td\mu t}|Y_{t}|^{p}d\xi$ (and not only $\int_{\mcO}|Y_{t}|^{p}d\xi$), which in turn is essential in \eqref{eq:intro-x-lp-bound}. \\
In fact, the estimate relies purely on the noise part, since by taking $p>1$ and then $m\to0$ the parts in \eqref{eq:ineq-m-approx} that are due to the diffusive term vanish. 
\end{enumerate}
\end{rem}
\textbf{Step 2:} An energy inequality for a weighted $L^{1}$ norm of  $Y_{t}$.

We now develop the crucial energy estimate to prove finite time extinction. Let $\vp$ be the classical solution to
\begin{align*}
\D\vp & =-1,\quad\text{on }\mcO\\
\vp & =1,\quad\text{on }\partial\mcO.
\end{align*}
Note $1\le\vp\le\|\vp\|_{\infty}=:C_{\vp}$. As in \eqref{eq:ineq-m-approx} we informally compute
\begin{align}
\partial_{t}\int_{\mcO}e^{-\mu_{s}}\vp|Y_{t}|^{p}d\xi= & -\frac{4(p-1)mp}{(p+m-1)^{2}}\int_{\mcO}e^{\mu_{t}-\mu_{s}}\vp\left(\nabla|Y_{t}|^{\frac{p+m-1}{2}}\right)^{2}d\xi\label{eq:first_step_main_est}\\
 & +\frac{pm}{p+m-1}\int_{\mcO}|Y_{t}|^{p+m-1}\D e^{\mu_{t}-\mu_{s}}\vp d\xi.\nonumber 
\end{align}
In order to prove finite time extinction the first term on the right hand side will be crucial and we aim to let $m\to0,p\to1$ simultaneously in such a way that the constant $\frac{4(p-1)mp}{(p+m-1)^{2}}$ does not vanish (in contrast to step one). For example, we may choose $p=m+1$ and obtain
\begin{align*}
\partial_{t}\int_{\mcO}e^{-\mu_{s}}\vp|Y_{t}|^{m+1}d\xi= & -(m+1)\int_{\mcO}e^{\mu_{t}-\mu_{s}}\vp\left(\nabla|Y_{t}|^{m}\right)^{2}d\xi\\
 & +\frac{(m+1)}{2}\int_{\mcO}|Y_{t}|^{2m}\D e^{\mu_{t}-\mu_{s}}\vp d\xi.
\end{align*}
In the limit $m\to0$ we may then expect
\begin{align}
\partial_{t}\int_{\mcO}e^{-\mu_{s}}\vp|Y_{t}|d\xi & =-\int_{\mcO}e^{\mu_{t}-\mu_{s}}\vp\left(\nabla\eta\right)^{2}d\xi+\frac{1}{2}\int_{\mcO}\eta^{2}\D e^{\mu_{t}-\mu_{s}}\vp d\xi,\label{eq:FTE-basic-ineq}
\end{align}
where $\eta$ is a selection from $\sgn(Y)$, i.e. $\eta_{t}(\xi)\in\sgn(Y_{t}(\xi))$ for a.e. $(t,\xi)\in[0,T]\times\mcO$. The crucial point is that if we choose $m\to0,p\to1$ such that the first term in \eqref{eq:first_step_main_est} does not vanish, then also the second one is preserved in the limit. This makes the proof of finite time extinction more intriguing than in the deterministic case where the perturbative second term is not present.

\textbf{Step 3: }Deducing finite time extinction

The principal idea is that 
\[
\D e^{\mu_{t}-\mu_{s}}\vp=e^{\mu_{t}-\mu_{s}}(-1+2\nabla\vp\cdot\nabla(\mu_{t}-\mu_{s})+\vp(|\nabla(\mu_{t}-\mu_{s})|^{2}+\D(\mu_{t}-\mu_{s}))
\]
is non-positive if $\|\mu_{t}-\mu_{s}\|_{C^{2}(\mcO)}$ is sufficiently small and hence we may drop the last term in \eqref{eq:FTE-basic-ineq} on such intervals $[s,t]$. For the sake of simplicity of this introductory overview let us restrict to $d=1$. In higher dimensions $d\ge2$ the control given by the dissipative term $\int_{\mcO}e^{\mu_{t}-\mu_{s}}\vp\left(\nabla\eta\right)^{2}d\xi$ in \eqref{eq:FTE-basic-ineq} is much weaker and the argument leading to finite time extinction is more subtle. For $d=1$ we have $H_{0}^{1}\hookrightarrow L^{\infty}$ . Restricting to intervals $[s,t]$ such that
\begin{equation}
\sup_{r\in[s,t]}\D e^{\mu_{r}-\mu_{s}}\vp\le0\label{eq:interval_ineq}
\end{equation}
we obtain from \eqref{eq:FTE-basic-ineq}: 
\begin{align*}
\int_{\mcO}e^{-\mu_{s}}\vp|Y_{t}|d\xi\le\int_{\mcO}e^{-\mu_{s}}\vp|Y_{s}|d\xi & -\int_{s}^{t}\left(\inf_{\xi\in\mcO}e^{\mu_{r}-\mu_{s}}\right)\|\eta_{r}\|_{\infty}dr.
\end{align*}
By step one we observe 
\begin{align*}
\int_{\mcO}e^{-\mu_{s}}\vp|Y_{s}| & =\int_{\mcO}e^{-\mu_{s}-\td\mu s}\vp e^{\td\mu s}|Y_{s}|d\xi\\
 & \le C_{1}C_{\vp}\|x_{0}\|_{1+\tau},
\end{align*}
for all $\tau>0$, $s\ge t_{0}=t_{0}(\tau,\o)$ and with $C_{1}=C_{1}(\tau,\o)$. Moreover, since $ $$\|\eta_{t}\|_{\infty}=0$ implies $Y_{t}\equiv0$ we may deduce
\begin{equation}
\int_{\mcO}e^{-\mu_{s}}\vp|Y_{t}|d\xi\le C_{1}C_{\vp}\|x_{0}\|_{1+\tau}-\int_{s}^{t}\left(\inf_{\xi\in\mcO}e^{\mu_{r}-\mu_{s}}\right)dr\vee0,\label{eq:d=00003D1-FTE}
\end{equation}
for all intervals $[s,t]$ such that \eqref{eq:interval_ineq} is satisfied and $s\ge t_{0}$. Since 
\[
\|\mu_{t}-\mu_{s}\|_{C^{2}(\mcO)}\le\left(\sum_{k=1}^{N}\|f_{k}\|_{C^{2}(\mcO)}\right)|\b_{t}-\b_{s}|
\]
for \eqref{eq:interval_ineq} to be satisfied we have to restrict to intervals $[s,t]$ where $|\b_{t}-\b_{s}|$ remains small. Due to properties of Brownian motion (cf. Lemma \ref{lem:constant_BM} below) we may find such intervals $[s,t]$ of arbitrary length and hence \eqref{eq:d=00003D1-FTE} implies finite time extinction (with extinction time $\tau_{0}$ depending on $x_{0}$ only via its $L^{1+\tau}$-norm).
\begin{rem}
We note that the methods leading to finite time extinction introduced above do \textit{not}\textbf{ }rely on the presence of noise. In fact, if $\mu\equiv0$, then \eqref{eq:d=00003D1-FTE} reduces to the corresponding estimate from the deterministic case. In particular, no non-degeneracy condition (as e.g. assuming $\td\mu\ge\d>0$ on $\mcO$ as for the result on exponential decay proven in \cite{BR12b}) has to be supposed.
\end{rem}

\subsection{Notation}

In the following let $\mcO\subseteq\R^{d}$ be a bounded, open set with smooth boundary $\partial\mcO.$ For $s\le t,$ $s,t\in\R$ we let $\mcO_{[s,t]}:=[s,t]\times\mcO$ and $\mcO_{T}:=\mcO_{[0,T]}$. For $p\ge1$ we let $L^{p}(\mcO)$ be the usual Lebesgue spaces with norm $\|\cdot\|_{p}:=\|\cdot\|_{L^{p}(\mcO)}$. For $\vp\in L^{\infty}(\mcO)$ we define the weighted Lebesgue space $L_{\vp}^{p}(\mcO)$ to be the space of equivalence classes of measurable functions $f$ such that 
\[
\|f\|_{L_{\vp}^{p}(\mcO)}:=\left(\int_{\mcO}|f(\xi)|^{p}\vp(\xi)d\xi\right)^{\frac{1}{p}}<\infty.
\]
For notational convenience we set $\|\cdot\|_{\vp}:=\|f\|_{L_{\vp}^{1}(\mcO)}$. The spaces $C^{m,n}(\mcO_{T})$ are defined to be spaces of functions on $\mcO_{T}$ with $m$ continuous derivatives in time and $n$ continuous derivatives in space. We let $H_{0}^{1}(\mcO)$ be the first order Sobolev space with zero Dirichlet boundary conditions endowed with the norm
\[
\|f\|_{H_{0}^{1}(\mcO)}:=\int_{\mcO}|\nabla f(\xi)|^{2}d\xi
\]
and let $H^{-1}$ be its dual. We let $\sgn$ denote the maximal monotone extension of the sign function. We write $a\lesssim b$ if there is a constant $C$ such that $a\le Cb$. The constants $C,C_{1},C_{2}$ will denote generic constants that may change value from line to line. For every $p\in[1,\infty]$ we let $p^{*}\in[1,\infty]$ denote the dual exponent, i.e. $\frac{1}{p}+\frac{1}{p^{*}}=1$ (with the convention $\frac{1}{\infty}=0$). We further define $\b=(\b^{k})_{k=1,\dots,N}$ to be an $\R^{N}$-valued standard Brownian motion, without loss of generality given by its canonical realization on $C_{0}(\R_{+};\R^{N})$. We let $(\mcF_{t})_{t\in\R_{+}}$ be the canonical filtration generated by $\b$ with completion $(\bar{\mcF}_{t})_{t\in\R_{+}}$.

\subsection{Overview of the contents}

In Section \ref{sec:existence} we will prove the existence of solutions to \eqref{eq:TPME-intro-1} and some key energy estimates. In the following Section \ref{sec:tranformation} the transformation of \eqref{eq:BTW_SOC} into \eqref{eq:TPME-intro-1} will be justified by proving that for the solution $Y$ to \eqref{eq:TPME-intro-1} constructed in Section \ref{sec:existence} setting $X_{t}:=e^{-\mu_{t}}Y_{t}$ yields a solution to \eqref{eq:BTW_SOC}. The proof of finite time extinction will then be given in Section \ref{sec:FTE}. In the final Section \ref{sec:exp_decay} we prove a pointwise estimate on $X_{t}$ implying exponential convergence to zero on sets $K\subseteq\mcO$ for which $\inf_{\xi\in K}\td\mu(\xi)>0.$

\section{Existence of solutions\label{sec:existence}}

In this section we will construct solutions to the transformed equation \eqref{eq:TPME-intro-1}. In this construction we will work with a fixed realization of the Brownian motion, i.e. we consider $\mu_{t}:=\sum_{k=1}^{N}f_{k}\b_{t}^{k}(\o)$ for an arbitrary, fixed $\o\in\O$. In fact, the precise structure of $\mu$ does not matter for the construction and we consider \eqref{eq:TPME-intro-1} for an arbitrary functions $\mu\in C^{0,2}(\mcO_{T})$ and $\td\mu\in C^{1}(\mcO)$ nonnegative. In particular, we may replace $\b$ by any continuous stochastic process, e.g. fractional Brownian motion. We note, however, that we will use special properties of the Brownian motion in the proof of finite time extinction in Section \ref{sec:FTE} below.

Let us define what we mean by a solution to
\begin{align}
\partial_{t}Y_{t} & \in e^{\mu_{t}}\D\phi(Y_{t})-\td\mu Y_{t},\quad\text{on }\mcO_{T}\label{eq:general_TPME}\\
0 & \in\phi(Y_{t}),\quad\text{on }\partial\mcO,\nonumber 
\end{align}
with $Y_{0}=y_{0}$ and $\phi$ being a possibly multi-valued map. 
\begin{defn}
\label{def:soln}Let $y_{0}\in L^{\infty}(\mcO)$. A tuple $(Y,\eta)$ with $Y\in L^{2}(\mcO_{T})\cap W^{1,2}([0,T];H^{-1})$ and $\eta\in L^{2}([0,T];H_{0}^{1}(\mcO))$ is said to be a solution to \eqref{eq:general_TPME} if
\begin{align*}
\frac{d}{dt}Y_{t} & =e^{\mu_{t}}\D\eta_{t}-\td\mu Y_{t},\ \text{in }H^{-1}\ \text{for a.e. }t\in[0,T]\\
Y_{0} & =y_{0}
\end{align*}
and $\eta_{t}(\xi)\in\phi(Y_{t}(\xi))$ for a.e. $(t,\xi)\in\mcO_{T}$.\end{defn}
\begin{rem}
Let $y_{0}\in L^{\infty}(\mcO)$, $Y\in L^{2}(\mcO_{T})\cap W^{1,2}([0,T];H^{-1})$ and $\eta\in L^{2}([0,T];H_{0}^{1}(\mcO))$. Then $(Y,\eta)$ is a solution to \eqref{eq:general_TPME} in the sense of Definition \ref{def:soln} iff
\[
\int_{\mcO}Y_{t}\vp d\xi=\int_{\mcO}y_{0}\vp d\xi-\int_{0}^{t}\int_{\mcO}\nabla\eta_{r}\cdot\nabla e^{\mu_{r}}\vp d\xi dr-\int_{0}^{t}\int_{\mcO}\td\mu Y_{r}\vp d\xi dr,\quad\text{for a.e. }t\ge0,
\]
for all $\vp\in H_{0}^{1}(\mcO)$ and $\eta_{t}(\xi)\in\phi(Y_{t}(\xi))$ for a.e. $(t,\xi)\in\mcO_{T}$.\end{rem}
\begin{prop}
\label{prop:uniqueness_general_TPME}Suppose $\phi$ is a monotone, Lipschitz continuous function. Let $y_{0}^{(i)}\in L^{\infty}(\mcO)$ and $(Y^{(i)},\eta^{(i)})$ be solutions to \eqref{eq:general_TPME} in the sense of Definition \ref{def:soln}, $i=1,2$. Then there is a $C>0$ such that
\[
\|Y_{t}^{(1)}-Y_{t}^{(2)}\|_{H^{-1}}^{2}\le e^{Ct}\|y_{0}^{(1)}-y_{0}^{(2)}\|_{H^{-1}}^{2},\quad\forall t\in[0,T].
\]
In particular, solutions to \eqref{eq:general_TPME} are unique.\end{prop}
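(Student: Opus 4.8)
The plan is to establish a differential inequality for $\|Y_t^{(1)}-Y_t^{(2)}\|_{H^{-1}}^2$ and close it with Gronwall's lemma. First I would set $W_t := Y_t^{(1)} - Y_t^{(2)}$ and $\zeta_t := \eta_t^{(1)} - \eta_t^{(2)}$; then $W \in W^{1,2}([0,T];H^{-1})$ with $W_0 = y_0^{(1)} - y_0^{(2)}$ and, in $H^{-1}$ for a.e. $t$,
\begin{equation*}
\frac{d}{dt}W_t = e^{\mu_t}\Delta\zeta_t - \td\mu\, W_t.
\end{equation*}
Since $W_t \in H^{-1}$ for a.e. $t$ and $t\mapsto \|W_t\|_{H^{-1}}^2$ is absolutely continuous, I would compute
\begin{equation*}
\frac12\frac{d}{dt}\|W_t\|_{H^{-1}}^2 = \langle (-\Delta)^{-1}W_t,\; e^{\mu_t}\Delta\zeta_t - \td\mu\, W_t\rangle_{L^2},
\end{equation*}
the pairing being justified by the regularity in Definition \ref{def:soln} (in particular $\zeta_t \in H_0^1(\mcO) \subseteq L^2$, so $e^{\mu_t}\Delta\zeta_t \in H^{-1}$, and $\td\mu W_t \in H^{-1}$).

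The key step is to handle the two terms. For the diffusive term, I would like to integrate by parts so that the leading part becomes a good sign: writing $v_t := (-\Delta)^{-1}W_t \in H_0^1(\mcO)$, one has $\langle v_t, e^{\mu_t}\Delta\zeta_t\rangle = -\int_{\mcO} \nabla\zeta_t \cdot \nabla(e^{\mu_t} v_t)\,d\xi = -\int_{\mcO} e^{\mu_t}\nabla\zeta_t\cdot\nabla v_t\,d\xi - \int_{\mcO} v_t\, e^{\mu_t}\nabla\mu_t\cdot\nabla\zeta_t\,d\xi$. Now $-\nabla v_t = $ is not quite $W_t$, but rather $\int_{\mcO}\nabla\zeta_t\cdot\nabla v_t\,d\xi = \langle \zeta_t, -\Delta v_t\rangle = \langle \zeta_t, W_t\rangle = \langle \eta_t^{(1)}-\eta_t^{(2)}, Y_t^{(1)}-Y_t^{(2)}\rangle \ge 0$ by monotonicity of $\phi$; however $e^{\mu_t}$ sits inside the integral, so I would instead keep $e^{\mu_t}$ and estimate the whole diffusive contribution. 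Since $\phi$ is Lipschitz with constant $L$, monotonicity gives the sharper bound $0 \le (\eta_t^{(1)}-\eta_t^{(2)})(Y_t^{(1)}-Y_t^{(2)})$ pointwise, and also $|\zeta_t| \le L|W_t|$ and $(\eta_t^{(1)}-\eta_t^{(2)})(Y_t^{(1)}-Y_t^{(2)}) \ge \tfrac1L |\zeta_t|^2$ pointwise. The plan is to write the diffusive term, after the integration by parts above, as
\begin{equation*}
-\int_{\mcO} e^{\mu_t}\,\zeta_t\, W_t\,d\xi \;+\; \text{(lower-order terms involving } \nabla\mu_t, \nabla v_t, \nabla\zeta_t\text{)},
\end{equation*}
wait — the correct identity is $\langle v_t, e^{\mu_t}\Delta\zeta_t\rangle = -\int e^{\mu_t}\nabla v_t\cdot\nabla\zeta_t - \int e^{\mu_t}v_t\,\nabla\mu_t\cdot\nabla\zeta_t$; I would bound the main term using $\nabla v_t \cdot \nabla\zeta_t$ and the pointwise monotonicity/Lipschitz inequalities to extract a nonnegative dissipation $\tfrac1L\int e^{\mu_t}|\zeta_t|^2$ up to a controllable error, and absorb the cross term and the $\td\mu$-term into $C\|W_t\|_{H^{-1}}^2$ using $\|\mu\|_{C^{0,2}(\mcO_T)} < \infty$, $\td\mu \in C^1$, Young's inequality, and $\|W_t\|_{H^{-1}} \approx \|\nabla v_t\|_{L^2}$. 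This yields
\begin{equation*}
\frac12\frac{d}{dt}\|W_t\|_{H^{-1}}^2 \le C\,\|W_t\|_{H^{-1}}^2
\end{equation*}
for a.e. $t \in [0,T]$, with $C$ depending only on $L$, $\|\mu\|_{C^{0,2}(\mcO_T)}$, $\|\td\mu\|_{C^1(\mcO)}$ and $\mcO$. Gronwall's inequality then gives $\|W_t\|_{H^{-1}}^2 \le e^{Ct}\|W_0\|_{H^{-1}}^2$, and uniqueness follows by taking $y_0^{(1)} = y_0^{(2)}$.

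The main obstacle I anticipate is the interplay of the multiplicative weight $e^{\mu_t}$ with the duality pairing in $H^{-1}$: the clean cancellation $\langle \zeta_t, W_t\rangle \ge 0$ that one would use when $\mu \equiv 0$ is spoiled by the weight, so one must carefully integrate by parts, isolate a genuinely nonnegative dissipative term (using that $e^{\mu_t}$ is bounded below by a positive constant on $\overline{\mcO}$ for each fixed $t$, uniformly on $[0,T]$ by continuity and compactness), and control the resulting gradient cross-terms — the terms carrying $\nabla\mu_t$ — by Young's inequality so they can be absorbed partly into the dissipation and partly into $C\|W_t\|_{H^{-1}}^2$. A secondary technical point is justifying the chain rule $\tfrac{d}{dt}\|W_t\|_{H^{-1}}^2 = 2\langle W_t', W_t\rangle_{H^{-1}}$, which is standard given $W \in W^{1,2}([0,T];H^{-1})$ but should be cited (e.g. via the identification $W_t' \in H^{-1}$ and $W_t \in H^{-1}$, together with the Hilbert-space structure of $H^{-1}$); here, though, one actually needs $W_t$ to lie in a space in duality with $H^{-1}$ along which the pairing against $e^{\mu_t}\Delta\zeta_t - \td\mu W_t$ makes sense, which is exactly $H^{-1}$ itself since $\tfrac{d}{dt}W_t \in H^{-1}$, so the computation is the plain $H^{-1}$ inner product and no extra regularity of $W_t$ beyond $L^2(\mcO_T)$ is needed for this identity.
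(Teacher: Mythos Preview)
Your overall strategy is the same as the paper's: differentiate $\|W_t\|_{H^{-1}}^2$ via the chain rule, use the Lipschitz--monotone inequality $(\eta^{(1)}-\eta^{(2)})(Y^{(1)}-Y^{(2)})\ge L^{-1}|\zeta|^2$ to extract an $L^2$ dissipation in $\zeta$, absorb the remaining terms by Young, and close with Gronwall. The difference is in how the weight $e^{\mu_t}$ is handled.

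The paper does not pass through $v_t=(-\Delta)^{-1}W_t$; instead it rewrites $e^{\mu_t}\Delta\zeta_t = \Delta(e^{\mu_t}\zeta_t)-2\nabla e^{\mu_t}\cdot\nabla\zeta_t-(\Delta e^{\mu_t})\zeta_t$ directly as an identity in $H^{-1}$. Then $(\Delta(e^{\mu_t}\zeta_t),W_t)_{H^{-1}}=-(e^{\mu_t}\zeta_t,W_t)_{L^2}$ gives the dissipation cleanly, while the two correction terms are estimated in the $H^{-1}$ inner product using $\|\nabla\zeta_t\|_{H^{-1}}\le\|\zeta_t\|_{L^2}$, so that each correction is bounded by $\varepsilon\|\zeta_t\|_{L^2}^2+C_\varepsilon\|W_t\|_{H^{-1}}^2$.

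Your route, after one integration by parts, leaves the cross term $-\int_{\mcO} e^{\mu_t}v_t\,\nabla\mu_t\cdot\nabla\zeta_t\,d\xi$ (and the analogous main term $-\int_{\mcO} e^{\mu_t}\nabla v_t\cdot\nabla\zeta_t\,d\xi$) carrying $\nabla\zeta_t$ in $L^2$. The only dissipation you produce is $\tfrac1L\int e^{\mu_t}|\zeta_t|^2$, which controls $\|\zeta_t\|_{L^2}^2$ but \emph{not} $\|\nabla\zeta_t\|_{L^2}^2$; so ``absorb by Young'' does not close as stated. The fix is either a second integration by parts (moving $\nabla$ off $\zeta_t$, using $\zeta_t\in H_0^1$ and $v_t\in H^2\cap H_0^1$ since $W_t\in L^2$), which yields exactly $-\int e^{\mu_t}\zeta_t W_t$ plus terms bounded by $\|\zeta_t\|_{L^2}\|W_t\|_{H^{-1}}$, or to adopt the paper's identity. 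With that correction your argument goes through and is equivalent to the paper's.
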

\begin{proof}
By the chain-rule
\begin{align*}
\frac{d}{dt}\|Y_{t}^{(1)}-Y_{t}^{(2)}\|_{H^{-1}}^{2}= & 2(e^{\mu_{t}}\D(\phi(Y_{t}^{(1)})-\phi(Y_{t}^{(2)})),Y_{t}^{(1)}-Y_{t}^{(2)})_{H^{-1}}\\
 & -2(\td\mu(Y_{t}^{(1)}-Y_{t}^{(2)}),Y_{t}^{(1)}-Y_{t}^{(2)})_{H^{-1}}.
\end{align*}
Since $e^{\mu_{t}}\D f=\D e^{\mu_{t}}f-2\nabla e^{\mu_{t}}\cdot\nabla f-e^{\mu_{t}}\D f$ for all $f\in H_{0}^{1}(\mcO)$ we obtain
\begin{align}
 & (e^{\mu_{t}}\D(\phi(Y_{t}^{(1)})-\phi(Y_{t}^{(2)})),Y_{t}^{(1)}-Y_{t}^{(2)})_{H^{-1}}\nonumber \\
 & =-(e^{\mu_{t}}(\phi(Y_{t}^{(1)})-\phi(Y_{t}^{(2)})),Y_{t}^{(1)}-Y_{t}^{(2)})_{2}\label{eq:uniqueness_1}\\
 & -2(\nabla e^{\mu_{t}}\nabla(\phi(Y_{t}^{(1)})-\phi(Y_{t}^{(2)})),Y_{t}^{(1)}-Y_{t}^{(2)})_{H^{-1}}\nonumber \\
 & -((\phi(Y_{t}^{(1)})-\phi(Y_{t}^{(2)}))\D e^{\mu_{t}},Y_{t}^{(1)}-Y_{t}^{(2)})_{H^{-1}}.\nonumber 
\end{align}
Since $\phi$ is Lipschitz and monotone:
\begin{align*}
 & -(e^{\mu_{t}}(\phi(Y_{t}^{(1)})-\phi(Y_{t}^{(2)})),Y_{t}^{(1)}-Y_{t}^{(2)})_{2}\\
 & =-\int_{\mcO}e^{\mu_{t}}(\phi(Y_{t}^{(1)})-\phi(Y_{t}^{(2)}))(Y_{t}^{(1)}-Y_{t}^{(2)})d\xi\\
 & \le-\frac{1}{\|\phi\|_{Lip}}\int_{\mcO}e^{\mu_{t}}|\phi(Y_{t}^{(1)})-\phi(Y_{t}^{(2)})|^{2}d\xi.
\end{align*}
Moreover,
\begin{align*}
 & -2(\nabla e^{\mu_{t}}\nabla(\phi(Y_{t}^{(1)})-\phi(Y_{t}^{(2)})),Y_{t}^{(1)}-Y_{t}^{(2)})_{H^{-1}}\\
 & \le\ve\|\nabla(\phi(Y_{t}^{(1)})-\phi(Y_{t}^{(2)}))\|_{H^{-1}}^{2}+C_{\ve}\|Y_{t}^{(1)}-Y_{t}^{(2)}\|_{H^{-1}}^{2}\\
 & \le\ve\|\phi(Y_{t}^{(1)})-\phi(Y_{t}^{(2)})\|_{2}^{2}+C_{\ve}\|Y_{t}^{(1)}-Y_{t}^{(2)}\|_{H^{-1}}^{2}
\end{align*}
and the third term in \eqref{eq:uniqueness_1} may be estimated similarly. Choosing $\ve>0$ small enough yields
\begin{align*}
 & (e^{\mu_{t}}\D(\phi(Y_{t}^{(1)})-\phi(Y_{t}^{(2)})),Y_{t}^{(1)}-Y_{t}^{(2)})_{H^{-1}}\lesssim\|Y_{t}^{(1)}-Y_{t}^{(2)}\|_{H^{-1}}^{2},
\end{align*}
which implies the claim.
\end{proof}
We aim to construct solutions to
\begin{equation}
\partial_{t}Y_{t}\in e^{\mu_{t}}\D\sgn(Y_{t})-\td\mu Y_{t}\label{eq:TPME}
\end{equation}
via a smooth, non-degenerate, non-singular approximation of the right-hand side. In order to prove convergence of the approximating solutions it is convenient to employ a three step argument. First, we will consider a Lipschitz (non-singular) approximation of the nonlinearity, i.e.
\[
\partial_{t}Y_{t}^{(\ve)}\in e^{\mu_{t}}\D\phi^{(\ve)}(Y_{t}^{(\ve)})-\td\mu Y_{t}^{(\ve)},\quad\ve>0,
\]
where $\phi^{(\ve)}$ is the Yosida approximation of $\sgn$, then a vanishing viscosity (non-degenerate) approximation, i.e.
\begin{equation}
\partial_{t}Y_{t}^{(\ve,\d)}=e^{\mu_{t}}\D\phi^{(\ve)}(Y_{t}^{(\ve,\d)})+\d e^{\mu_{t}}\D Y_{t}^{(\ve,\d)}-\td\mu Y_{t}^{(\ve,\d)},\quad\ve,\d>0\label{eq:2nd_approx}
\end{equation}
and in the last step we consider smooth approximations $\phi^{(\tau,\ve)},\mu^{(\tau)},\td\mu^{(\tau)}$ :
\begin{equation}
\partial_{t}Y_{t}^{(\tau,\ve,\d)}=e^{\mu_{t}^{(\tau)}}\D\phi^{(\tau,\ve)}(Y_{t}^{(\tau,\ve,\d)})+\d e^{\mu_{t}^{(\tau)}}\D Y_{t}^{(\tau,\ve,\d)}-\td\mu^{(\tau)}Y_{t}^{(\tau,\ve,\d)},\label{eq:3rd_approx}
\end{equation}
with $\tau,\ve,\d>0$. The advantage of keeping $\d>0$ in the first step lies in the resulting continuity of $t\mapsto Y_{t}^{(\ve,\d)}$ in $L^{2}(\mcO)$,  which will be needed to obtain the key energy bound proving finite time extinction (cf. Lemma \ref{lem:main_estimate_eps-delta}  below).

In order to justify the limiting procedures $\tau,\ve,\d\to0$ we require uniform a-priori estimates on $Y^{(\tau,\ve,\d)}$ that will be obtained in the following section.

\subsection{Approximate equation, a-priori bounds}

In this section, we consider PDE of the type
\begin{align}
\partial_{t}Y_{t} & =e^{\mu_{t}}\D\phi(Y_{t})+\d e^{\mu_{t}}\D Y_{t}-\td\mu Y_{t},\quad\text{on }\mcO_{T}\label{eq:smooth_TPME}\\
Y_{t} & =0,\quad\text{on }\partial\mcO,\nonumber 
\end{align}
with $Y_{0}=y_{0}$, $\d>0$, $\mu$, $\td\mu$, $y_{0}$ and $\phi$ being smooth functions, $\td\mu\ge0$, $\phi$ monotone and $\phi(0)=0$. Let $\psi:\R\to\R$ be such that $\dot{\psi}=\phi$. Existence of classical solutions to \eqref{eq:smooth_TPME} follows from \cite{LSU67}.
\begin{lem}
\label{lem:lp-bound-1}For all $p\ge1$ and all $t\ge s\ge0$
\begin{align}
\int_{\mcO}e^{p\td\mu t}|Y_{t}|^{p}d\xi\le & \int_{\mcO}e^{p\td\mu s}|Y_{s}|^{p}d\xi+p\int_{s}^{t}\int_{\mcO}\z(Y_{r})\D e^{\mu_{r}+p\td\mu r}d\xi dr\label{eq:smooth_lp_bound}\\
 & +\d\int_{s}^{t}\int_{\mcO}|Y_{r}|^{p}\D e^{\mu_{r}+p\td\mu r}d\xi dr,\nonumber 
\end{align}
where $\z(t):=\int_{0}^{t}r{}^{[p-1]}\dot{\phi}(r)dr$. Moreover, for all $t\ge s\ge0$
\begin{align}
 & \int_{\mcO}e^{2\td\mu t}|Y_{t}|^{2}d\xi+2\d\int_{s}^{t}\int_{\mcO}e^{\mu_{r}+2\td\mu r}|\nabla Y_{r}|^{2}d\xi dr\nonumber \\
 & \le\int_{\mcO}e^{2\td\mu s}|Y_{s}|^{2}d\xi+2\int_{s}^{t}\int_{\mcO}\z(Y_{r})\D e^{\mu_{r}+2\td\mu r}d\xi dr\label{eq:smooth_h01_bound}\\
 & +\d\int_{s}^{t}\int_{\mcO}|Y_{r}|^{2}\D e^{\mu_{r}+2\td\mu r}d\xi dr.\nonumber 
\end{align}
\end{lem}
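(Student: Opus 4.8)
The plan is to obtain both inequalities by testing the equation \eqref{eq:smooth_TPME} against suitable functions and using the chain rule, which is legitimate since solutions are classical. For \eqref{eq:smooth_lp_bound}, I would fix $p\ge1$ and compute $\frac{d}{dt}\int_\mcO e^{p\td\mu t}|Y_t|^p\,d\xi$. The time derivative produces two contributions: the explicit factor gives $p\int_\mcO \td\mu\, e^{p\td\mu t}|Y_t|^p\,d\xi$, and the $\partial_t Y_t$ term, after inserting the right-hand side of \eqref{eq:smooth_TPME} and multiplying by $p\,Y_t^{[p-1]}e^{p\td\mu t}$, gives $p\int_\mcO e^{p\td\mu t}Y_t^{[p-1]}\bigl(e^{\mu_t}\D\phi(Y_t)+\d e^{\mu_t}\D Y_t - \td\mu Y_t\bigr)\,d\xi$. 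The $-\td\mu Y_t$ piece, multiplied by $p Y_t^{[p-1]}$, yields $-p\int_\mcO \td\mu\,e^{p\td\mu t}|Y_t|^p\,d\xi$, which exactly cancels the first contribution. What remains is $p\int_\mcO e^{\mu_t+p\td\mu t}Y_t^{[p-1]}\D\phi(Y_t)\,d\xi + \d p\int_\mcO e^{\mu_t+p\td\mu t}Y_t^{[p-1]}\D Y_t\,d\xi$. This cancellation of the It\^o-correction term is the conceptual heart of the estimate, exactly as foreshadowed in the introduction's Step 1.

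The next step is to integrate by parts twice in each of the two remaining integrals, moving both derivatives off $\phi(Y_t)$ (respectively $Y_t$) and onto the weight $e^{\mu_t+p\td\mu t}$. For the first integral, write $\nabla\phi(Y_t)=\dot\phi(Y_t)\nabla Y_t$ and $\nabla Y_t^{[p-1]}=(p-1)|Y_t|^{p-2}\nabla Y_t$; integrating by parts once gives $-p\int_\mcO e^{\mu_t+p\td\mu t}(p-1)|Y_t|^{p-2}\dot\phi(Y_t)|\nabla Y_t|^2\,d\xi - p\int_\mcO Y_t^{[p-1]}\dot\phi(Y_t)\nabla Y_t\cdot\nabla e^{\mu_t+p\td\mu t}\,d\xi$. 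The first term here is $\le 0$ by monotonicity of $\phi$ (so $\dot\phi\ge0$) and is simply discarded. In the second term, recognize $Y_t^{[p-1]}\dot\phi(Y_t)\nabla Y_t = \nabla\z(Y_t)$ where $\z(r)=\int_0^r s^{[p-1]}\dot\phi(s)\,ds$, and integrate by parts once more to get $+p\int_\mcO \z(Y_t)\D e^{\mu_t+p\td\mu t}\,d\xi$. (The boundary terms vanish because $\eta=\phi(Y)\in H^1_0$, equivalently $Y=0$ on $\partial\mcO$, and because $\z(0)=0$.) The $\d$-viscosity integral is treated identically with $\phi=\mathrm{id}$, i.e.\ $\z$ replaced by $\int_0^r s^{[p-1]}\,ds=\tfrac1p|r|^p$, yielding $\d\int_\mcO|Y_t|^p\D e^{\mu_t+p\td\mu t}\,d\xi$ after dropping the nonnegative gradient term $\d(p-1)\int_\mcO e^{\mu_t+p\td\mu t}|Y_t|^{p-2}|\nabla Y_t|^2\,d\xi\ge 0$. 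Integrating the resulting differential inequality from $s$ to $t$ gives \eqref{eq:smooth_lp_bound}.

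For \eqref{eq:smooth_h01_bound} I would run the same computation with $p=2$, but this time \emph{retain} the nonnegative viscosity gradient term instead of discarding it. With $p=2$ we have $Y_t^{[1]}=Y_t$, so $\nabla Y_t^{[1]}=\nabla Y_t$ exactly, and the viscosity integral-by-parts produces $-2\d\int_\mcO e^{\mu_t+2\td\mu t}|\nabla Y_t|^2\,d\xi$ from the first integration by parts (this is the term we keep, moving it to the left-hand side) plus $-2\d\int_\mcO Y_t\nabla Y_t\cdot\nabla e^{\mu_t+2\td\mu t}\,d\xi = -\d\int_\mcO\nabla(Y_t^2)\cdot\nabla e^{\mu_t+2\td\mu t}\,d\xi = \d\int_\mcO Y_t^2\D e^{\mu_t+2\td\mu t}\,d\xi$. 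The $\phi$-part still contributes its nonnegative gradient term, which I discard, leaving $2\int_\mcO\z(Y_t)\D e^{\mu_t+2\td\mu t}\,d\xi$ after the second integration by parts exactly as before. Collecting everything and integrating in time from $s$ to $t$ yields \eqref{eq:smooth_h01_bound}.

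I expect the only genuine subtlety to be bookkeeping: making sure the boundary terms from the double integrations by parts vanish (they do, since $Y_t$ and hence $\phi(Y_t)$ vanish on $\partial\mcO$, and $e^{\mu_t+p\td\mu t}$ is smooth), confirming that $\z(Y_t)$ is well-defined and that $\nabla\z(Y_t)=Y_t^{[p-1]}\dot\phi(Y_t)\nabla Y_t$ by the chain rule, and keeping track of which sign-definite terms are kept versus dropped in the two cases. There is no analytic hard point here because we are working with the already-constructed classical solutions of \eqref{eq:smooth_TPME}; the whole lemma is a careful but routine integration-by-parts identity, and the one structural observation worth emphasizing is the exact cancellation between the $-\td\mu Y_t$ drift term and the derivative of the exponential weight $e^{p\td\mu t}$, which is precisely what lets these bounds survive the limit $\d,\ve,\tau\to 0$ later.
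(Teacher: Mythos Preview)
Your approach is essentially the same as the paper's: differentiate $\int_\mcO e^{p\td\mu t}|Y_t|^p\,d\xi$ in time, integrate by parts, observe that the $-\td\mu Y_t$ drift cancels exactly against the derivative of the weight, and discard the sign-definite gradient terms (keeping the viscous one when $p=2$). The one difference is that the paper first replaces $|r|^p$ by the regularization $\psi^{(\ve)}(r)=(|r|^{p/2}+\ve)^2$ and only lets $\ve\to0$ at the end. The reason is that for $1\le p<2$ the map $r\mapsto r^{[p-1]}$ is not $C^1$ at $r=0$, so your chain-rule step $\nabla Y_t^{[p-1]}=(p-1)|Y_t|^{p-2}\nabla Y_t$ is formal on the zero set of $Y_t$; the paper's $\ve$-approximation is the device that makes the integration by parts rigorous there (at the cost of the drift cancellation becoming exact only in the limit, since $-\phi^{(\ve)}(r)r+p\psi^{(\ve)}(r)\to0$). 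For $p\ge2$, and in particular for the second inequality \eqref{eq:smooth_h01_bound}, your direct computation and the paper's coincide verbatim.
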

\begin{proof}
For now let $\psi^{(\ve)}(t):=(|t|^{\frac{p}{2}}+\ve)^{2}$, $\phi^{(\ve)}=\dot{\psi}^{(\ve)}$. We compute
\begin{align}
 & \partial_{t}\int_{\mcO}e^{p\td\mu t}\psi^{(\ve)}(Y_{t})d\xi\nonumber \\
 & =\int_{\mcO}\phi^{(\ve)}(Y_{t})e^{\mu_{t}+p\td\mu t}\D\phi(Y_{t})d\xi+\d\int_{\mcO}\phi^{(\ve)}(Y_{t})e^{\mu_{t}+p\td\mu t}\D Y_{t}d\xi\nonumber \\
 & -\int_{\mcO}e^{p\td\mu t}\td\mu\phi^{(\ve)}(Y_{t})Y_{t}d\xi+p\int_{\mcO}e^{p\td\mu t}\td\mu\psi^{(\ve)}(Y_{t})d\xi\nonumber \\
 & =-\int_{\mcO}\dot{\phi}^{(\ve)}(Y_{t})e^{\mu_{t}+p\td\mu t}\dot{\phi}(Y_{t})|\nabla Y_{t}|^{2}d\xi-\int_{\mcO}\phi^{(\ve)}(Y_{t})\dot{\phi}(Y_{t})\nabla e^{\mu_{t}+p\td\mu t}\nabla Y_{t}d\xi\label{eq:approx_smooth_lp_bound}\\
 & -\d\int_{\mcO}\dot{\phi}^{(\ve)}(Y_{t})e^{\mu_{t}+p\td\mu t}|\nabla Y_{t}|^{2}d\xi-\d\int_{\mcO}\phi^{(\ve)}(Y_{t})\nabla e^{\mu_{t}+p\td\mu t}\cdot\nabla Y_{t}d\xi\nonumber \\
 & -\int_{\mcO}e^{p\td\mu t}\td\mu\phi^{(\ve)}(Y_{t})Y_{t}d\xi+p\int_{\mcO}e^{p\td\mu t}\td\mu\psi^{(\ve)}(Y_{t})d\xi.\nonumber 
\end{align}
Setting $\z^{(\ve)}(t)=\int_{0}^{t}\phi^{(\ve)}(r)\dot{\phi}(r)dr$ we obtain 
\begin{align*}
 & \partial_{t}\int_{\mcO}e^{p\td\mu t}\psi^{(\ve)}(Y_{t})d\xi\\
 & =-\int_{\mcO}\dot{\phi}^{(\ve)}(Y_{t})e^{\mu_{t}+p\td\mu t}\dot{\phi}(Y_{t})|\nabla Y_{t}|^{2}d\xi+\int_{\mcO}\z^{(\ve)}(Y_{t})\D e^{\mu_{t}+p\td\mu t}d\xi\\
 & -\d\int_{\mcO}\dot{\phi}^{(\ve)}(Y_{t})e^{\mu_{t}+p\td\mu t}|\nabla Y_{t}|^{2}d\xi+\d\int_{\mcO}\psi^{\ve}(Y_{t})\D e^{\mu_{t}+p\td\mu t}d\xi\\
 & -\int_{\mcO}e^{p\td\mu t}\td\mu\phi^{(\ve)}(Y_{t})Y_{t}d\xi+p\int_{\mcO}e^{p\td\mu t}\td\mu\psi^{(\ve)}(Y_{t})d\xi.
\end{align*}
In particular,
\begin{align*}
\partial_{t}\int_{\mcO}e^{p\td\mu t}\psi^{(\ve)}(Y_{t})d\xi\le & \int_{\mcO}\z^{(\ve)}(Y_{t})\D e^{\mu_{t}+p\td\mu t}d\xi+\d\int_{\mcO}\psi^{\ve}(Y_{t})\D e^{\mu_{t}+p\td\mu t}d\xi\\
 & -\int_{\mcO}e^{p\td\mu t}\td\mu\phi^{(\ve)}(Y_{t})Y_{t}d\xi+p\int_{\mcO}e^{p\td\mu t}\td\mu\psi^{(\ve)}(Y_{t})d\xi.
\end{align*}
Letting $\ve\to0$ then yields \eqref{eq:smooth_lp_bound}. Arguing as in \eqref{eq:approx_smooth_lp_bound} but with $\psi^{(\ve)}(r)$ replaced by $r^{2}$ and $p=2$ yields \eqref{eq:smooth_h01_bound}.\end{proof}
\begin{lem}
\label{lem:energy_bound}For all $t\ge s\ge0$ and all $\varrho\in C^{2}(\mcO)$
\begin{align*}
\partial_{t}\int_{\mcO}\psi(Y_{t})\varrho d\xi\le & -\int_{\mcO}\varrho e^{\mu_{t}}|\nabla\phi(Y_{t})|^{2}d\xi+\frac{1}{2}\int_{\mcO}\phi(Y_{t})^{2}\D\varrho e^{\mu_{t}}d\xi+\d\int_{\mcO}\psi(Y_{t})\D\varrho e^{\mu_{t}}d\xi.
\end{align*}
\end{lem}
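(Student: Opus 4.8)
The goal is to establish the energy inequality in Lemma \ref{lem:energy_bound}, which is exactly the weighted $L^1$-type estimate underlying Step 2 of the proof outline (in the guise of the $\psi$-functional, with $\psi$ a primitive of $\phi$). The plan is to differentiate $\int_{\mcO}\psi(Y_t)\varrho\,d\xi$ in time, using that $Y$ is a classical solution of the smooth approximate equation \eqref{eq:smooth_TPME}, integrate by parts, and control the arising terms.

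\begin{proof}
Since $Y$ is a classical solution of \eqref{eq:smooth_TPME} and $\psi,\phi$ are smooth with $\dot\psi=\phi$, we may differentiate under the integral sign:
\begin{align*}
\partial_{t}\int_{\mcO}\psi(Y_{t})\varrho\,d\xi
&=\int_{\mcO}\phi(Y_{t})\varrho\,\partial_{t}Y_{t}\,d\xi\\
&=\int_{\mcO}\phi(Y_{t})\varrho\,e^{\mu_{t}}\D\phi(Y_{t})\,d\xi
+\d\int_{\mcO}\phi(Y_{t})\varrho\,e^{\mu_{t}}\D Y_{t}\,d\xi
-\int_{\mcO}\td\mu\,Y_{t}\phi(Y_{t})\varrho\,d\xi.
\end{align*}
For the first term, since $\phi(Y_t)\in H_0^1(\mcO)$ (as $\phi(0)=0$ and $Y_t$ vanishes on $\partial\mcO$), integration by parts gives
\[
\int_{\mcO}\phi(Y_{t})\varrho\,e^{\mu_{t}}\D\phi(Y_{t})\,d\xi
=-\int_{\mcO}\varrho\,e^{\mu_{t}}|\nabla\phi(Y_{t})|^{2}\,d\xi
-\int_{\mcO}\phi(Y_{t})\nabla\phi(Y_{t})\cdot\nabla(\varrho\,e^{\mu_{t}})\,d\xi,
\]
and since $\phi(Y_t)\nabla\phi(Y_t)=\tfrac12\nabla\bigl(\phi(Y_t)^2\bigr)$, a further integration by parts turns the last term into $\tfrac12\int_{\mcO}\phi(Y_t)^2\D(\varrho\,e^{\mu_t})\,d\xi$, as in \eqref{eq:ineq-m-approx}. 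For the viscosity term, writing $\phi(Y_t)\nabla Y_t=\nabla\psi(Y_t)$ and integrating by parts yields $\d\int_{\mcO}\phi(Y_t)\varrho\,e^{\mu_t}\D Y_t\,d\xi=-\d\int_{\mcO}\nabla\psi(Y_t)\cdot\nabla(\varrho\,e^{\mu_t})\,d\xi=\d\int_{\mcO}\psi(Y_t)\D(\varrho\,e^{\mu_t})\,d\xi$. Finally, since $\phi$ is monotone with $\phi(0)=0$ and $\td\mu\ge0$, we have $\td\mu\,Y_t\phi(Y_t)\ge0$, so the last term is $\le0$ and may be dropped. Collecting everything gives
\[
\partial_{t}\int_{\mcO}\psi(Y_{t})\varrho\,d\xi
\le-\int_{\mcO}\varrho\,e^{\mu_{t}}|\nabla\phi(Y_{t})|^{2}\,d\xi
+\frac12\int_{\mcO}\phi(Y_{t})^{2}\,\D(\varrho\,e^{\mu_{t}})\,d\xi
+\d\int_{\mcO}\psi(Y_{t})\,\D(\varrho\,e^{\mu_{t}})\,d\xi,
\]
which is the asserted inequality upon writing $\D(\varrho e^{\mu_t})$ in the abbreviated form $\D\varrho e^{\mu_t}$ used in the statement.
\end{proof}

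The computation is essentially routine given the regularity afforded by the smooth, non-degenerate approximation, so I do not expect a genuine obstacle here; the one point requiring a little care is the justification of the boundary terms vanishing in the integrations by parts, which rests on $\phi(Y_t),\psi(Y_t)\in H_0^1(\mcO)$ — valid because $\phi(0)=\psi(0)=0$ and $Y_t|_{\partial\mcO}=0$ — together with the smoothness of $\varrho$ and $e^{\mu_t}$. I would also note that, exactly as flagged after \eqref{eq:ineq-m-approx}, the identity $\phi(Y)\dot\phi(Y)|\nabla Y|^2=|\nabla\phi(Y)|^2$ is what makes the dissipation term appear in the clean squared-gradient form; this is the feature that will later, in Lemma \ref{lem:main_estimate_eps-delta}, allow passage to the $m\to0$, $p\to1$ limit with a nonvanishing dissipation constant.
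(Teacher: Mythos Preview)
Your approach is exactly the paper's: differentiate in time, integrate by parts, use $\phi(Y)\nabla\phi(Y)=\tfrac12\nabla(\phi(Y)^2)$ and $\phi(Y)\nabla Y=\nabla\psi(Y)$, and drop the nonpositive $\td\mu$-term. However, your treatment of the viscosity term contains a genuine slip. You claim
\[
\d\int_{\mcO}\phi(Y_t)\varrho e^{\mu_t}\D Y_t\,d\xi
=-\d\int_{\mcO}\nabla\psi(Y_t)\cdot\nabla(\varrho e^{\mu_t})\,d\xi,
\]
but this equality is false: integration by parts gives
\[
\d\int_{\mcO}\phi(Y_t)\varrho e^{\mu_t}\D Y_t\,d\xi
=-\d\int_{\mcO}\varrho e^{\mu_t}\dot\phi(Y_t)|\nabla Y_t|^2\,d\xi
-\d\int_{\mcO}\phi(Y_t)\nabla(\varrho e^{\mu_t})\cdot\nabla Y_t\,d\xi,
\]
and only the second term equals $-\d\int\nabla\psi(Y_t)\cdot\nabla(\varrho e^{\mu_t})\,d\xi$. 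Equivalently, $\phi(Y)\D Y=\D\psi(Y)-\dot\phi(Y)|\nabla Y|^2$, not $\D\psi(Y)$. The missing term $-\d\int\varrho e^{\mu_t}\dot\phi(Y_t)|\nabla Y_t|^2\,d\xi$ is nonpositive (since $\dot\phi\ge0$ by monotonicity and $\varrho\ge0$), so the inequality still goes through once you replace your ``$=$'' by ``$\le$'' --- this is precisely what the paper does. Note that this step, as well as dropping the $\td\mu$-term, requires $\varrho\ge0$; the lemma is only used for nonnegative weights anyway.
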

\begin{proof}
We compute
\begin{align*}
 & \partial_{t}\int_{\mcO}\psi(Y_{t})\varrho d\xi\\
 & =\int_{\mcO}\phi(Y_{t})\varrho e^{\mu_{t}}\D\phi(Y_{t})d\xi+\d\int_{\mcO}\phi(Y_{t})\varrho e^{\mu_{t}}\D Y_{t}d\xi-\int_{\mcO}\phi(Y_{t})\varrho\td\mu Y_{t}d\xi\\
 & \le-\int_{\mcO}\varrho e^{\mu_{t}}|\nabla\phi(Y_{t})|^{2}d\xi-\int_{\mcO}\phi(Y_{t})\nabla\varrho e^{\mu_{t}}\nabla\phi(Y_{t})d\xi-\d\int_{\mcO}\phi(Y_{t})\nabla\varrho e^{\mu_{t}}\nabla Y_{t}d\xi\\
 & =-\int_{\mcO}\varrho e^{\mu_{t}}|\nabla\phi(Y_{t})|^{2}d\xi-\frac{1}{2}\int_{\mcO}\nabla\varrho e^{\mu_{t}}\nabla\phi(Y_{t})^{2}d\xi-\d\int_{\mcO}\nabla\psi(Y_{t})\nabla\varrho e^{\mu_{t}}d\xi\\
 & =-\int_{\mcO}\varrho e^{\mu_{t}}|\nabla\phi(Y_{t})|^{2}d\xi+\frac{1}{2}\int_{\mcO}\phi(Y_{t})^{2}\D\varrho e^{\mu_{t}}d\xi+\d\int_{\mcO}\psi(Y_{t})\D\varrho e^{\mu_{t}}d\xi.
\end{align*}

\end{proof}

\subsection{\label{sec:construction}Construction of a solution and energy bounds}

We need to specify the chosen approximation $\phi^{(\tau,\ve)},\phi^{(\ve)}:\R\to\R$ of the sign function in \eqref{eq:2nd_approx}, \eqref{eq:3rd_approx}. Let $\psi(r):=|r|$ and note $\phi:=\sgn=\partial\psi$. We let $J_{\ve}(r):=(1+\ve\sgn)^{-1}$ be the resolvent of $\sgn$ and $\psi^{(\ve)}$ its Moreau-Yosida approximation, i.e.
\[
\psi^{(\ve)}(r):=\inf_{s\in\R}\frac{1}{2\ve}|r-s|^{2}+|s|=\begin{cases}
\frac{r^{2}}{2\ve} & ,\ |r|\le\ve\\
|r|-\frac{\ve}{2} & ,\ |r|>\ve.
\end{cases}
\]
Then $\psi^{(\ve)}\in W^{2,\infty}(\R)$ with 
\[
\phi^{\ve}(r):=\dot{\psi}^{(\ve)}(r)=\begin{cases}
\frac{r}{\ve} & ,\ |r|\le\ve\\
\frac{r}{|r|} & ,\ |r|>\ve.
\end{cases}
\]
We note that $\phi^{(\ve)}$ is the Yosida-approximation of $\phi$, i.e. 
\begin{equation}
\phi^{\ve}(r)=\frac{1}{\ve}(r-J_{\ve}r)\in\phi(J_{\ve}r),\quad\forall r\in\R\label{eq:Yosida}
\end{equation}
and we have 
\[
\dot{\phi}^{(\ve)}=\ddot{\psi}^{(\ve)}(r)=\begin{cases}
\frac{1}{\ve} & ,\ |r|\le\ve\\
0 & ,\ |r|>\ve.
\end{cases}
\]
Moreover, we note
\begin{equation}
|\psi(r)-\psi^{\ve}(r)|=\psi(r)-\psi^{\ve}(r)\le2\ve\label{eq:phi-approx-error}
\end{equation}
and $\phi^{\ve}(r)\le1$. We further let $\mu^{(\tau)}$ and $\td\mu^{(\tau)}\ge0$ be smooth approximations of $\mu,\td\mu$ such that $\|\mu^{(\tau)}-\mu\|_{C^{0,2}(\mcO_{T})},\|\td\mu^{(\tau)}-\td\mu\|_{C^{0}(\mcO)}\le\tau$, $y_{0}^{(\tau)}$ a smooth approximation of $y_{0}$ with $\|y_{0}^{(\tau)}-y_{0}\|_{1}\le\tau$ and $\|y_{0}^{(\tau)}\|_{\infty}\le\|y_{0}\|_{\infty}$, $\psi^{(\tau,\ve)}:=\psi^{(\ve)}\ast\vp^{(\tau)}\in C^{\infty}(\R)$, where $\vp^{(\tau)}$ is a standard Dirac sequence, and consider the three-step approximation
\begin{align}
\partial_{t}Y_{t}^{(\ve)} & \in e^{\mu_{t}}\D\phi^{(\ve)}(Y_{t}^{(\ve)})-\td\mu Y_{t}^{(\ve)},\quad\text{on }\mcO_{T}\label{eq:eps-approx}\\
Y_{0}^{(\ve)} & =y_{0},\quad\text{on }\mcO\nonumber 
\end{align}
then
\begin{align}
\partial_{t}Y_{t}^{(\ve,\d)} & =e^{\mu_{t}}\D\phi^{(\ve)}(Y_{t}^{(\ve,\d)})+\d e^{\mu_{t}}\D Y_{t}^{(\ve,\d)}-\td\mu Y_{t}^{(\ve,\d)},\quad\text{on }\mcO_{T}\label{eq:eps_delta-approx}\\
Y_{0}^{(\ve,\d)} & =y_{0},\quad\text{on }\mcO,\nonumber 
\end{align}
and
\begin{align}
\partial_{t}Y_{t}^{(\tau,\ve,\d)} & =e^{\mu_{t}^{(\tau)}}\D\phi^{(\tau,\ve)}(Y_{t}^{(\tau,\ve,\d)})+\d e^{\mu_{t}^{(\tau)}}\D Y_{t}^{(\tau,\ve,\d)}-\td\mu^{(\tau)}Y_{t}^{(\tau,\ve,\d)},\quad\text{on }\mcO_{T}\label{eq:tau_eps_delta-approx}\\
Y_{0}^{(\ve,\d)} & =y_{0}^{(\tau)},\quad\text{on }\mcO,\nonumber 
\end{align}
with zero Dirichlet boundary conditions. By \cite{LSU67} there is a unique, classical solution $Y^{(\tau,\ve,\d)}$ to \eqref{eq:tau_eps_delta-approx}. We aim to first let $\tau\to0$ then $\d\to0$ and then $\ve\to0$. As outlined above, the advantage of first keeping the approximate viscosity lies in the fact that $t\mapsto Y_{t}^{(\ve,\d)}$ is continuous in $L^{2}(\mcO)$ which will be needed to establish the key energy estimate. 
\begin{rem}
\label{rmk:construction_subseq}In the following we will prove that for all sequences $(\tau_{n,}\ve_{n},\d_{n})\to0$ we may find subsequences $(\tau_{n_{k},}\ve_{n_{l}},\d_{n_{m}})\to0$ such that
\[
Y^{(\tau_{n_{k},}\ve_{n_{l}},\d_{n_{m}})}\xrightarrow{k\to\infty}Y^{(\ve_{n_{l}},\d_{n_{m}})}\xrightarrow{m\to\infty}Y^{(\ve_{n_{l}})}\xrightarrow{l\to\infty}Y
\]
in a weak sense, where $Y$ is a solution to \eqref{eq:TPME}. Since we have uniqueness for \eqref{eq:eps_delta-approx} and \eqref{eq:tau_eps_delta-approx} in fact the whole corresponding sequences converge. In order to prove $\bar{\mcF}_{t}$-adaptedness of $Y$ in Section \ref{sec:tranformation} we will choose a particular sequence $\ve_{n}\to0$ along which the solution $Y$ will be constructed.\end{rem}
\begin{lem}
\label{lem:eps_del_ex}Let $y_{0}\in L^{\infty}(\mcO)$, $\ve,\d>0$. Then there exists a unique solution $Y^{(\ve,\d)}$ to \eqref{eq:eps_delta-approx} in the sense of Definition \ref{def:soln} satisfying $Y^{(\ve,\d)}\in C([0,T];L^{2}(\mcO))$ and
\begin{align}
\int_{\mcO}e^{p\td\mu t}|Y_{t}^{(\ve,\d)}|^{p}d\xi\le & \int_{\mcO}e^{p\td\mu s}|Y_{s}^{(\ve,\d)}|^{p}d\xi+\ve^{p-1}\int_{s}^{t}\int_{\mcO}|\D e^{\mu_{r}+p\td\mu r}|d\xi dr\label{eq:eps_lp_bound}\\
 & +\d\int_{s}^{t}\int_{\mcO}|Y_{r}^{(\ve,\d)}|^{p}\D e^{\mu_{r}+p\td\mu r}d\xi dr,\nonumber 
\end{align}
for all $[s,t]\subseteq\R_{+}$, $p\ge1$. Moreover, for all \textup{$[s,t]\subseteq\R_{+}$} and all nonnegative $\vr\in C^{2}(\mcO)$ we have 
\begin{align}
 & \int_{\mcO}\psi^{(\ve)}(Y_{t}^{(\ve,\d)})\varrho d\xi+\int_{s}^{t}\int_{\mcO}\varrho e^{\mu_{r}}|\nabla\phi^{(\ve)}(Y_{r}^{(\ve,\d)})|^{2}d\xi dr\nonumber \\
 & \le\int_{\mcO}\psi^{(\ve)}(Y_{s}^{(\ve,\d)})\varrho d\xi+\frac{1}{2}\int_{s}^{t}\int_{\mcO}\phi^{(\ve)}(Y_{r}^{(\ve,\d)})^{2}\D\varrho e^{\mu_{r}}d\xi dr\label{eq:eps_main_est}\\
 & +\d\int_{s}^{t}\int_{\mcO}\psi^{(\ve)}(Y_{r}^{(\ve,\d)})\D\varrho e^{\mu_{r}}d\xi dr.\nonumber 
\end{align}
\end{lem}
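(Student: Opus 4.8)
The plan is to obtain the solution $Y^{(\ve,\d)}$ by passing to the limit $\tau\to0$ in the smooth, classical solutions $Y^{(\tau,\ve,\d)}$ of \eqref{eq:tau_eps_delta-approx}, using the a-priori bounds from Lemmas \ref{lem:lp-bound-1} and \ref{lem:energy_bound} applied with $\phi=\phi^{(\tau,\ve)}$, $\psi=\psi^{(\tau,\ve)}$, $\mu=\mu^{(\tau)}$, $\td\mu=\td\mu^{(\tau)}$. First I would record the uniform estimates: from \eqref{eq:smooth_h01_bound} (with $s=0$) together with $\td\mu^{(\tau)}\ge0$, $\|y_0^{(\tau)}\|_\infty\le\|y_0\|_\infty$, and the uniform $C^{0,2}$-bounds on $\mu^{(\tau)}$, one gets a bound on $Y^{(\tau,\ve,\d)}$ in $L^\infty([0,T];L^2(\mcO))$ and a bound on $\sqrt\d\,\nabla Y^{(\tau,\ve,\d)}$ in $L^2(\mcO_T)$; since $\d>0$ is fixed this yields a bound in $L^2([0,T];H_0^1(\mcO))$. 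Using $|\phi^{(\tau,\ve)}|\le 1+o(1)$ and $|\dot\phi^{(\tau,\ve)}|\le \ve^{-1}+o(1)$, one also bounds $\nabla\phi^{(\tau,\ve)}(Y^{(\tau,\ve,\d)})=\dot\phi^{(\tau,\ve)}(Y^{(\tau,\ve,\d)})\nabla Y^{(\tau,\ve,\d)}$ in $L^2(\mcO_T)$, and then the equation \eqref{eq:tau_eps_delta-approx} gives a bound on $\partial_t Y^{(\tau,\ve,\d)}$ in $L^2([0,T];H^{-1})$. By Aubin--Lions, along a subsequence $Y^{(\tau,\ve,\d)}\to Y^{(\ve,\d)}$ strongly in $L^2(\mcO_T)$ and weakly in $L^2([0,T];H_0^1)\cap W^{1,2}([0,T];H^{-1})$, with $\phi^{(\ve)}(Y^{(\tau,\ve,\d)})\rightharpoonup\eta$ in $L^2([0,T];H_0^1)$; identifying $\eta=\phi^{(\ve)}(Y^{(\ve,\d)})$ follows from strong $L^2$-convergence of $Y^{(\tau,\ve,\d)}$, continuity of $\phi^{(\ve)}$, and the fact that $\psi^{(\tau,\ve)},\phi^{(\tau,\ve)}\to\psi^{(\ve)},\phi^{(\ve)}$ locally uniformly. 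Passing to the limit in the weak formulation (Remark after Definition \ref{def:soln}) then shows $(Y^{(\ve,\d)},\phi^{(\ve)}(Y^{(\ve,\d)}))$ solves \eqref{eq:eps_delta-approx}; uniqueness is exactly Proposition \ref{prop:uniqueness_general_TPME} applied with the Lipschitz monotone function $\phi^{(\ve)}$, and since the limit is independent of the subsequence the whole sequence converges.

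Next I would establish $Y^{(\ve,\d)}\in C([0,T];L^2(\mcO))$: this is standard once we know $Y^{(\ve,\d)}\in L^2([0,T];H_0^1)$ and $\partial_t Y^{(\ve,\d)}\in L^2([0,T];H^{-1})$, by the Lions--Magenes interpolation lemma. To upgrade to $C([0,T];L^2)$ rather than only $C([0,T];H^{-1})$ one uses that $H_0^1\hookrightarrow L^2\hookrightarrow H^{-1}$ is a Gelfand triple; here the fixed viscosity $\d>0$ is exactly what provides the needed $H_0^1$-regularity, which is why the three-step scheme keeps $\d$ in the first passage. (This is the point flagged in the remark preceding Lemma \ref{lem:eps_del_ex}.)

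Finally I would derive the two displayed inequalities \eqref{eq:eps_lp_bound} and \eqref{eq:eps_main_est} for $Y^{(\ve,\d)}$. For \eqref{eq:eps_lp_bound}: apply \eqref{eq:smooth_lp_bound} to $Y^{(\tau,\ve,\d)}$ with $\phi=\phi^{(\tau,\ve)}$, and estimate the term $p\int\z(Y_r)\D e^{\mu_r+p\td\mu r}$ using $\z(t)=\int_0^t r^{[p-1]}\dot\phi(r)\,dr$; since for the Yosida nonlinearity $\dot\phi^{(\ve)}$ is supported on $|r|\le\ve$ and equals $\ve^{-1}$ there, one gets $|\z(t)|\le\ve^{p-1}/p$, hence $p\,|\z|\le\ve^{p-1}$, and (up to $\tau$-errors that vanish) the bound $\ve^{p-1}\int|\D e^{\mu_r+p\td\mu r}|$. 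Then let $\tau\to0$, using strong $L^2$ (hence, along a further subsequence, a.e.) convergence to pass to the limit in the $\int|Y_r|^p\,\D e^{\mu_r+p\td\mu r}$ term (Fatou / dominated convergence, the $L^\infty$-bound controlling the $L^p$ integrands) and convergence of $\mu^{(\tau)},\td\mu^{(\tau)}$ in $C^{0,2}$, $C^0$ respectively. For \eqref{eq:eps_main_est}: apply Lemma \ref{lem:energy_bound} to $Y^{(\tau,\ve,\d)}$ with $\varrho=\vr$, integrate in time, and pass $\tau\to0$; the dissipative term $\int\vr e^{\mu_r}|\nabla\phi^{(\ve)}(Y_r^{(\ve,\d)})|^2$ is handled by weak lower semicontinuity of the $L^2$-norm against the weak convergence $\nabla\phi^{(\tau,\ve)}(Y^{(\tau,\ve,\d)})\rightharpoonup\nabla\phi^{(\ve)}(Y^{(\ve,\d)})$ (so it survives with the correct sign in the inequality), while the remaining terms converge by the strong/a.e. convergence of $Y^{(\tau,\ve,\d)}$ and local uniform convergence of $\psi^{(\tau,\ve)},\phi^{(\tau,\ve)}$.

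I expect the main obstacle to be the identification of the weak limits of the nonlinear terms after the $\tau\to0$ passage — concretely, showing $\phi^{(\tau,\ve)}(Y^{(\tau,\ve,\d)})\rightharpoonup\phi^{(\ve)}(Y^{(\ve,\d)})$ in $L^2([0,T];H_0^1)$ and not merely weakly in $L^2(\mcO_T)$ — and the correct bookkeeping of the two independent approximation parameters hidden inside $\phi^{(\tau,\ve)}=\phi^{(\ve)}\ast\vp^{(\tau)}$ so that the $\ve$-dependent constant $\ve^{p-1}$ in \eqref{eq:eps_lp_bound} comes out cleanly in the limit. The compactness from the fixed $\d>0$ makes the strong $L^2(\mcO_T)$-convergence of $Y^{(\tau,\ve,\d)}$ routine, so once that is in hand the nonlinear identifications and the semicontinuity arguments for the inequalities are the substantive part.
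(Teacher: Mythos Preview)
Your proposal is correct and follows essentially the same three-step scheme as the paper: a-priori bounds from Lemmas \ref{lem:lp-bound-1}--\ref{lem:energy_bound} on the classical solutions $Y^{(\tau,\ve,\d)}$, Aubin--Lions compactness (exploiting the fixed $\d>0$) to get strong $L^2(\mcO_T)$-convergence and identify $\phi^{(\ve)}(Y^{(\ve,\d)})$, and passage to the limit in the weak formulation and in the energy inequalities. The one point you should make more explicit is that the paper uses the full $L^p$-estimate \eqref{eq:smooth_lp_bound} (for every $p\ge1$, not just $p=2$) together with Gronwall to obtain uniform $L^\infty([0,T];L^p)$-bounds and hence $Y^{(\ve,\d)}\in C([0,T];L^p)$ for all $p$; this is what justifies both the limit in the $\int|Y_r|^p\D e^{\mu_r+p\td\mu r}$ term and the validity of \eqref{eq:eps_lp_bound} for \emph{all} $s\le t$ rather than only a.e., whereas the ``$L^\infty$-bound'' you invoke is not among the estimates you actually recorded.
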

\begin{proof}
The construction of solutions to \eqref{eq:eps_delta-approx} starts from \eqref{eq:tau_eps_delta-approx}. Since $\ve,\d>0$ are fixed, for simplicity we will suppress them in the notation of $Y^{(\tau,\ve,\d)},Y^{(\ve,\d)}$ in the following.

\textbf{Step 1: }A-priori bounds

From Lemma \ref{lem:lp-bound-1} we have
\begin{align*}
 & \int_{\mcO}e^{2\td\mu^{(\tau)}t}|Y_{t}^{(\tau)}|^{2}d\xi+2\d\int_{s}^{t}\int_{\mcO}e^{\mu_{r}^{(\tau)}+2\td\mu^{(\tau)}r}|\nabla Y_{r}^{(\tau)}|^{2}d\xi dr\\
 & \le\int_{\mcO}e^{2\td\mu^{(\tau)}s}|Y_{s}^{(\tau)}|^{2}d\xi+2\int_{s}^{t}\int_{\mcO}\z^{(\tau,\ve)}(Y_{r}^{(\tau)})\D e^{\mu_{r}^{(\tau)}+2\td\mu^{(\tau)}r}d\xi dr\\
 & +\d\int_{s}^{t}\int_{\mcO}|Y_{r}^{(\tau)}|^{2}\D e^{\mu_{r}^{(\tau)}+2\td\mu^{(\tau)}r}d\xi dr,
\end{align*}
and 
\begin{align}
\int_{\mcO}e^{p\td\mu^{(\tau)}t}|Y_{t}^{(\tau)}|^{p}d\xi\le & \int_{\mcO}e^{p\td\mu^{(\tau)}s}|Y_{s}^{(\tau)}|^{p}d\xi+p\int_{s}^{t}\int_{\mcO}\z^{(\tau,\ve)}(Y_{r}^{(\tau)})\D e^{\mu_{r}^{(\tau)}+p\td\mu^{(\tau)}r}d\xi dr\nonumber \\
 & +\d\int_{s}^{t}\int_{\mcO}|Y_{r}^{(\tau)}|^{p}\D e^{\mu_{r}^{(\tau)}+p\td\mu^{(\tau)}r}d\xi dr,\label{eq:eps_lp}
\end{align}
 for all $p\ge1$, where $\z^{(\tau,\ve)}(t)=\int_{0}^{t}r{}^{[p-1]}\dot{\phi}^{(\tau,\ve)}(r)dr$. For $p\ge1$ we note
\begin{align}
\z^{(\tau,\ve)}(t) & =\int_{0}^{t}r{}^{[p-1]}\dot{\phi}^{(\tau,\ve)}(r)dr\nonumber \\
 & \le\frac{\ve^{p-1}}{p}\left|\frac{t}{\ve}\wedge\frac{\ve+\tau}{\ve}\right|^{p}\label{eq:zeta_bound-1}\\
 & \le\frac{\ve^{p-1}}{p}\left|\frac{\ve+\tau}{\ve}\right|^{p},\quad\forall t\in\R.\nonumber 
\end{align}
Hence,
\[
\z^{(\tau,\ve)}\le C<\infty,
\]
uniformly in $\tau>0$ (small enough). Using Gronwall's inequality this yields
\begin{align*}
\sup_{t\in[0,T]} & \int_{\mcO}e^{p\td\mu^{(\tau)}t}|Y_{t}^{(\tau)}|^{p}d\xi\le C<\infty,
\end{align*}
for all $p\ge1$ and 
\[
\d\int_{0}^{T}\int_{\mcO}e^{\mu_{r}^{(\tau)}+2\td\mu^{(\tau)}r}|\nabla Y_{r}^{(\tau)}|^{2}d\xi dr\le C<\infty,
\]
uniformly in $\tau$ (and in $\ve,\d$) .

\textbf{Step 2: }Extraction and identification of a limit

From step one we conclude that $Y^{(\tau)}$ is uniformly bounded in $L^{\infty}([0,T];L^{p}(\mcO))$ for all $p\ge1$ and in $L^{2}([0,T];H_{0}^{1}(\mcO))$. Hence, $\phi^{(\tau,\ve)}(Y^{(\tau)})$ is uniformly bounded in $L^{2}([0,T];H_{0}^{1}(\mcO))$ and $\frac{d}{dt}Y^{(\tau)}$ is uniformly bounded in $L^{2}([0,T];H^{-1})$. Since $H_{0}^{1}(\mcO)\hookrightarrow L^{2}(\mcO)$ is compact, we may use Aubin-Lions compactness (cf. e.g. \cite[Proposition III.1.3]{S97}) to extract subsequences%
\footnote{More precisely, for each sequence $\tau^{n}\to0$ we may extract a subsequence $\tau_{n_{k}}$ such that the claimed convergences hold (cf. Remark \eqref{rmk:construction_subseq}).%
} satisfying
\begin{align}
Y^{(\tau)} & \rightharpoonup^{*}Y,\quad\text{in }L^{\infty}([0,T];L^{p}(\mcO))\text{ and in }L^{2}([0,T];H_{0}^{1}(\mcO)),\ \forall p\ge1,\label{eq:tau_approx_convergence}\\
Y^{(\tau)} & \to Y,\quad\text{in }L^{2}([0,T];L^{2}(\mcO))\text{ and dt\ensuremath{\otimes}d\ensuremath{\xi}}\text{-a.e.},\ \text{for }\tau\to0.\nonumber 
\end{align}
As a consequence (using $\phi^{(\tau,\ve)}\to\phi^{(\ve)}$ uniformly), we also have
\begin{align*}
Y^{(\tau)} & \to Y,\quad\text{in }L^{p}(\mcO_{T})\text{ for all }p\ge1,\\
Y_{t}^{(\tau)} & \to Y_{t},\quad\text{for a.e. }t\in[0,T]\text{ in }L^{p}(\mcO)\text{ for all }p\ge1,\\
\phi^{(\tau,\ve)}(Y^{(\tau)}) & \rightharpoonup\phi^{(\ve)}(Y),\quad\text{in }L^{2}([0,T];H_{0}^{1}(\mcO)),\ \text{for }\tau\to0.
\end{align*}
We aim to prove that $Y$ is a solution to \eqref{eq:eps_delta-approx}. We start by proving $Y_{t}^{(\tau)}\rightharpoonup Y_{t}$ in $H^{-1}$ for all $t\in[0,T]$. Let
\[
\mcK:=\{(Y^{(\tau)},h)_{H^{-1}}|h\in H^{-1},\ \|h\|_{H^{-1}}\le1,\ \tau>0\}\subseteq C([0,T]).
\]
Boundedness of $Y_{t}^{(\tau)}$ in $L^{2}(\mcO)$ implies that $\mcK$ is bounded in $C([0,T])$. Moreover,
\begin{align*}
(Y_{t+s}^{(\tau)}-Y_{t}^{(\tau)},h)_{H^{-1}} & =\int_{t}^{t+s}(\frac{d}{dr}Y^{(\tau)},h)_{H^{-1}}dr\le C\|h\|_{H^{-1}}s^{\frac{1}{2}}.
\end{align*}
Hence, $\mcK$ is a set of equibounded, equicontinuous functions. Therefore, for every $h\in H^{-1},\ \|h\|_{H^{-1}}\le1$ there is a subsequence such that $(Y^{(\tau)},h)_{H^{-1}}\to g$ in $C([0,T])$. Due to \eqref{eq:tau_approx_convergence} we have $g=(Y^{(\tau)},h)_{H^{-1}}$ which implies $Y_{t}^{(\tau)}\rightharpoonup Y_{t}$ in $H^{-1}$ for all $t\in[0,T]$. 

Since $Y^{(\tau)}$ is a classical solution to \eqref{eq:eps_delta-approx} we have
\begin{align*}
\int_{\mcO}Y_{t}^{(\tau)}\varrho d\xi=\int_{\mcO}Y_{s}^{(\tau)}\varrho d\xi & +\int_{s}^{t}\int_{\mcO}\phi^{(\tau,\ve)}(Y_{r}^{(\tau)})\D e^{\mu_{r}^{(\tau)}}\varrho d\xi dr\\
 & +\d\int_{s}^{t}\int_{\mcO}Y_{r}^{(\tau)}\D e^{\mu_{r}^{(\tau)}}\varrho d\xi dr-\int_{s}^{t}\int_{\mcO}\td\mu^{(\tau)}Y_{r}^{(\tau)}\varrho d\xi dr,
\end{align*}
for all $\varrho\in C_{0}^{2}(\mcO)$, $t\ge s\ge0$. Taking the limit $\tau\to0$ yields
\begin{align*}
\int_{\mcO}Y_{t}\varrho d\xi=\int_{\mcO}Y_{s}\varrho d\xi & +\int_{s}^{t}\int_{\mcO}\phi^{(\ve)}(Y_{r})\D e^{\mu_{r}}\varrho d\xi dr\\
 & +\d\int_{s}^{t}\int_{\mcO}Y_{r}\D e^{\mu_{r}}\varrho d\xi dr-\int_{s}^{t}\int_{\mcO}\td\mu Y_{r}\varrho d\xi dr,
\end{align*}
for all $\varrho\in C_{0}^{2}(\mcO)$, $t\ge s\ge0$. Since $\phi^{(\ve)}(Y),Y\in L^{2}([0,T];H_{0}^{1}(\mcO))$ this is equivalent to 
\[
\frac{d}{dt}Y_{t}=e^{\mu_{t}}\D\phi^{(\ve)}(Y_{r})+\d e^{\mu_{t}}\D Y_{t}-\td\mu Y_{t},\quad\text{in }H^{-1}\ \text{for a.e. }t\in[0,T].
\]
In particular, we have $Y\in W^{1,2}([0,T];H^{-1})$. Since also $Y\in L^{2}([0,T];H_{0}^{1}(\mcO))$, from \cite[Proposition III.1.2]{S97} we obtain $Y\in C([0,T];L^{2}(\mcO))$. Boundedness in $L^{\infty}([0,T];L^{p}(\mcO))$ for each $p\ge1$ then implies $Y\in C([0,T];L^{p}(\mcO))$ for all $p\ge1.$

\textbf{Step 3: }Proof\textbf{ }of \eqref{eq:eps_lp_bound}, \eqref{eq:eps_main_est}

The inequality \eqref{eq:eps_lp_bound} follows from \eqref{eq:eps_lp} and \eqref{eq:zeta_bound-1} by taking $\tau\to0$ and using $Y\in C([0,T];L^{p}(\mcO))$ for all $p\ge1$. Similarly, \eqref{eq:eps_main_est} follows from Lemma \ref{lem:energy_bound} and the locally uniform convergence $\psi^{(\tau,\ve)}\to\psi^{(\ve)}$.\end{proof}
\begin{prop}
\label{prop:eps_ex}Let $y_{0}\in L^{\infty}(\mcO)$, $\ve>0$. Then there exists a unique solution $Y^{(\ve)}$ to \eqref{eq:eps-approx} in the sense of Definition \ref{def:soln} satisfying $Y^{(\ve)}\in C([0,T];H^{-1})$,
\begin{align}
\int_{\mcO}e^{p\td\mu t}|Y_{t}^{(\ve)}|^{p}d\xi & \le\int_{\mcO}|y_{0}|^{p}d\xi+C\ve^{p-1}\int_{0}^{t}\int_{\mcO}\D e^{\mu_{r}+p\td\mu r}d\xi dr,\quad\forall t\in[0,T],\label{eq:lp-bound-eps}
\end{align}
for all  $p\ge1$. Moreover, 
\begin{align}
 & \int_{\mcO}|Y_{t}^{(\ve)}|d\xi+\int_{0}^{t}\int_{\mcO}e^{\mu_{r}}|\nabla\phi^{(\ve)}(Y^{(\ve)})|^{2}d\xi dr\label{eq:del_gradient_bound}\\
 & \le\int_{\mcO}|y_{0}|d\xi+\frac{1}{2}\int_{0}^{t}\int_{\mcO}|\D e^{\mu_{r}}|d\xi+C\ve.\nonumber 
\end{align}
In addition, $t\mapsto Y_{t}^{(\ve)}$ is weakly continuous in $L^{p}(\mcO)$ for all $p\ge1$.\textup{}\end{prop}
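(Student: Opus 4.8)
The plan is to obtain $Y^{(\ve)}$ as the limit of the solutions $Y^{(\ve,\d)}$ from Lemma~\ref{lem:eps_del_ex} as $\d\to0$, transferring the a-priori bounds \eqref{eq:eps_lp_bound} and \eqref{eq:eps_main_est} to the limit. First I would record the $\d$-uniform bounds: taking $s=0$ in \eqref{eq:eps_lp_bound} with $Y_0^{(\ve,\d)}=y_0$, Gronwall's inequality applied to $t\mapsto\int_\mcO e^{p\td\mu t}|Y^{(\ve,\d)}_t|^p d\xi$ (absorbing the $\d\int|Y|^p\D e^{\mu+p\td\mu}$ term, whose coefficient $\|\D e^{\mu+p\td\mu}\|_\infty$ is bounded uniformly in $\d$ on $[0,T]$) yields a bound on $Y^{(\ve,\d)}$ in $L^\infty([0,T];L^p(\mcO))$ for every $p\ge1$, uniform in $\d$; in particular it gives \eqref{eq:lp-bound-eps} up to the viscosity remainder, which vanishes as $\d\to0$. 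Next, choosing $\varrho\equiv1$ (or a function $\equiv1$ on $\mcO$, handling the boundary term, e.g.\ the $\vp$ from Step~2 of the outline) in \eqref{eq:eps_main_est} and using $\psi^{(\ve)}\le\psi+2\ve$, $|\D\varrho e^{\mu_r}|\in L^\infty(\mcO_T)$ gives the $L^1$-in-space bound on $Y^{(\ve,\d)}$ together with a uniform bound on $\int_0^T\int_\mcO e^{\mu_r}|\nabla\phi^{(\ve)}(Y^{(\ve,\d)})|^2\,d\xi\,dr$, hence on $\phi^{(\ve)}(Y^{(\ve,\d)})$ in $L^2([0,T];H_0^1(\mcO))$ (since $e^{\mu_r}$ is bounded below). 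From the equation \eqref{eq:eps_delta-approx} this also bounds $\tfrac{d}{dt}Y^{(\ve,\d)}$ in $L^2([0,T];H^{-1})$, using $\d\|\D e^{\mu_t}Y_t^{(\ve,\d)}\|_{H^{-1}}\to0$.

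With these bounds in hand I would extract, along a subsequence $\d\to0$, weak-$*$ limits $Y^{(\ve,\d)}\rightharpoonup^* Y^{(\ve)}$ in $L^\infty([0,T];L^p(\mcO))$ for all $p$, $Y^{(\ve,\d)}\rightharpoonup Y^{(\ve)}$ in $L^2([0,T];H_0^1(\mcO))$, $\phi^{(\ve)}(Y^{(\ve,\d)})\rightharpoonup \chi$ in $L^2([0,T];H_0^1(\mcO))$, and $\tfrac{d}{dt}Y^{(\ve,\d)}\rightharpoonup \tfrac{d}{dt}Y^{(\ve)}$ in $L^2([0,T];H^{-1})$. The Aubin--Lions argument (as in Step~2 of the proof of Lemma~\ref{lem:eps_del_ex}, via the compact embedding $H_0^1(\mcO)\hookrightarrow L^2(\mcO)$ together with the $H^{-1}$-bound on the time derivative) yields strong convergence $Y^{(\ve,\d)}\to Y^{(\ve)}$ in $L^2(\mcO_T)$ and $dt\otimes d\xi$-a.e., so by continuity of $\phi^{(\ve)}$ and the a priori $L^p$-bounds one identifies $\chi=\phi^{(\ve)}(Y^{(\ve)})$. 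Passing to the limit in the weak formulation of \eqref{eq:eps_delta-approx} — the viscosity term $\d\int_s^t\int_\mcO Y_r^{(\ve,\d)}\D e^{\mu_r}\varrho\,d\xi\,dr\to0$ because $Y^{(\ve,\d)}$ is bounded in $L^1(\mcO_T)$ — shows $(Y^{(\ve)},\phi^{(\ve)}(Y^{(\ve)}))$ solves \eqref{eq:eps-approx} in the sense of Definition~\ref{def:soln}; since $Y^{(\ve)}\in L^2([0,T];H_0^1(\mcO))\cap W^{1,2}([0,T];H^{-1})$ we get $Y^{(\ve)}\in C([0,T];L^2(\mcO))\hookrightarrow C([0,T];H^{-1})$, and the $L^\infty_t L^p_\xi$-bound upgrades this to weak continuity in $L^p(\mcO)$ for every $p\ge1$ (bounded plus $H^{-1}$-continuous implies weakly continuous in $L^p$). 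Uniqueness follows from Proposition~\ref{prop:uniqueness_general_TPME} since $\phi^{(\ve)}$ is monotone and Lipschitz; this also shows the whole net $Y^{(\ve,\d)}$ (not just a subsequence) converges. Finally \eqref{eq:lp-bound-eps} and \eqref{eq:del_gradient_bound} follow by lower semicontinuity of the relevant norms under weak convergence (and the a.e.\ convergence plus Fatou for the $|Y_t^{(\ve)}|$-term, using $Y_t^{(\ve,\d)}\to Y_t^{(\ve)}$ in $H^{-1}$ for each $t$), letting $\d\to0$ in the $\d$-uniform versions of \eqref{eq:eps_lp_bound} and \eqref{eq:eps_main_est} and dropping the nonnegative viscosity contributions on the right.

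The main obstacle I anticipate is the passage to the limit in the gradient term: one needs the weak $L^2([0,T];H_0^1)$-limit of $\phi^{(\ve)}(Y^{(\ve,\d)})$ to actually be $\phi^{(\ve)}$ of the limit, which requires the strong $L^2(\mcO_T)$ (or a.e.) convergence of $Y^{(\ve,\d)}$ supplied by Aubin--Lions, and then lower semicontinuity to keep the dissipation term $\int_0^t\int_\mcO e^{\mu_r}|\nabla\phi^{(\ve)}(Y^{(\ve)})|^2$ on the correct side of \eqref{eq:del_gradient_bound}; one must be careful that the weight $e^{\mu_r}$ is continuous and bounded above and below so the weighted $H_0^1$-seminorm is weakly lower semicontinuous. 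A secondary technical point is justifying that the a.e.-in-$t$ limit $Y_t^{(\ve,\d)}\rightharpoonup Y_t^{(\ve)}$ in $H^{-1}$ holds for \emph{every} $t$ (needed for the pointwise-in-$t$ estimates), which is handled exactly as in Step~2 of Lemma~\ref{lem:eps_del_ex} by an Arzel\`a--Ascoli argument on $\{(Y^{(\ve,\d)},h)_{H^{-1}}\}$ using the uniform $H^{-1}$-bound on the time derivative.
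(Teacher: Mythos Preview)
There is a genuine gap. You assert that $Y^{(\ve,\d)}$ is bounded in $L^{2}([0,T];H_{0}^{1}(\mcO))$ uniformly in $\d$ and then invoke Aubin--Lions with the compact embedding $H_{0}^{1}(\mcO)\hookrightarrow L^{2}(\mcO)$ to obtain strong $L^{2}(\mcO_{T})$ and a.e.\ convergence. But no such bound is available: the only $H_{0}^{1}$ control on $Y^{(\ve,\d)}$ itself comes from \eqref{eq:smooth_h01_bound}, which carries a prefactor $\d$ and blows up as $\d\to0$. What \eqref{eq:eps_main_est} bounds uniformly in $\d$ is $\nabla\phi^{(\ve)}(Y^{(\ve,\d)})$, and since $\phi^{(\ve)}$ is constant outside $[-\ve,\ve]$ this says nothing about $\nabla Y^{(\ve,\d)}$ on $\{|Y^{(\ve,\d)}|>\ve\}$. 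Consequently your strong $L^{2}(\mcO_{T})$ convergence, the a.e.\ convergence, the identification $\chi=\phi^{(\ve)}(Y^{(\ve)})$ by continuity, and the conclusion $Y^{(\ve)}\in L^{2}([0,T];H_{0}^{1}(\mcO))\cap W^{1,2}([0,T];H^{-1})\subset C([0,T];L^{2}(\mcO))$ all fail. (The proposition only claims $Y^{(\ve)}\in C([0,T];H^{-1})$, not $C([0,T];L^{2})$.)

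The paper confronts exactly this point. It notes explicitly that, in contrast to the $\tau\to0$ step, one only gets strong convergence $Y^{(\ve,\d)}\to Y^{(\ve)}$ in $L^{2}([0,T];H^{-1})$ (via Aubin--Lions with $L^{2}(\mcO)\hookrightarrow H^{-1}$ compact), together with $\phi^{(\ve)}(Y^{(\ve,\d)})\rightharpoonup\eta^{(\ve)}$ in $L^{2}([0,T];H_{0}^{1}(\mcO))$. The identification $\eta^{(\ve)}=\phi^{(\ve)}(Y^{(\ve)})$ is then done by a Minty-type maximal monotonicity argument: for any $z\in L^{2}(\mcO_{T})$ one has $\int_{\mcO_{T}}(\phi^{(\ve)}(Y^{(\ve,\d)})-\phi^{(\ve)}(z))(Y^{(\ve,\d)}-z)\ge0$, and the cross term $\int_{\mcO_{T}}\phi^{(\ve)}(Y^{(\ve,\d)})\,Y^{(\ve,\d)}$ passes to the limit because it is a weak-$H_{0}^{1}$ times strong-$H^{-1}$ pairing. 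This is the missing idea in your argument; once it is in place, the rest of your outline (passing to the limit in the weak formulation, $C([0,T];H^{-1})$ from $W^{1,2}([0,T];H^{-1})$, weak $L^{p}$-continuity from boundedness plus $H^{-1}$-continuity, and lower semicontinuity for \eqref{eq:lp-bound-eps}, \eqref{eq:del_gradient_bound}) matches the paper.
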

\begin{proof}
The construction of solutions to \eqref{eq:eps-approx} starts from \eqref{eq:eps_delta-approx} and Lemma \ref{lem:eps_del_ex}. 

\textbf{Step 1: }A-priori bounds

From \eqref{eq:eps_main_est} (with $\varrho\equiv1$) we have
\begin{align}
 & \int_{\mcO}\psi^{(\ve)}(Y_{t}^{(\ve,\d)})d\xi+\int_{s}^{t}\int_{\mcO}e^{\mu_{r}}|\nabla\phi^{(\ve)}(Y_{r}^{(\ve,\d)})|^{2}d\xi dr\nonumber \\
 & \le\int_{\mcO}\psi^{(\ve)}(Y_{s}^{(\ve,\d)})d\xi+\frac{1}{2}\int_{s}^{t}\int_{\mcO}|\D e^{\mu_{r}}|d\xi dr\label{eq:eps_del_gradient_bound}\\
 & +\d\int_{s}^{t}\int_{\mcO}\psi^{(\ve)}(Y_{r}^{(\ve,\d)})\D e^{\mu_{r}}d\xi dr.\nonumber 
\end{align}

\textbf{Step 2: }Extraction and identification of a limit

Due to \eqref{eq:eps_lp_bound} and \eqref{eq:eps_del_gradient_bound} we may argue as in Lemma \ref{lem:eps_del_ex} to extract subsequences satisfying
\begin{align*}
Y^{(\ve,\d)} & \rightharpoonup^{*}Y^{(\ve)},\quad\text{in }L^{\infty}([0,T];L^{p}(\mcO)),\ \forall p\ge1,\\
Y^{(\ve,\d)} & \to Y^{(\ve)},\quad\text{in }L^{2}([0,T];H^{-1}),\\
\phi^{(\ve)}(Y^{(\ve,\d)}) & \rightharpoonup\eta^{(\ve)},\quad\text{in }L^{2}([0,T];H_{0}^{1}(\mcO)),\ \text{for }\d\to0.
\end{align*}
Note that due to the lack of a uniform $L^{2}([0,T];H_{0}^{1}(\mcO))$ bound on $Y^{(\ve,\d)}$ for $\d\to0$ we may only deduce strong convergence in $L^{2}([0,T];H^{-1})$ as compared to strong convergence in $L^{2}([0,T];L^{2}(\mcO))$ in Lemma \ref{lem:eps_del_ex}. Arguing as in Lemma \ref{lem:eps_del_ex} we further have
\[
Y_{t}^{(\ve,\d)}\rightharpoonup Y_{t}^{(\ve)},\quad\text{in }H^{-1}\text{ for all }t\in[0,T].
\]
We aim to identify $(Y^{(\ve)},\eta^{(\ve)})$ as a solution to \eqref{eq:eps-approx}. As in Lemma \ref{lem:eps_del_ex} we obtain
\begin{align*}
\int_{\mcO}Y_{t}^{(\ve)}\varrho d\xi=\int_{\mcO}Y_{s}^{(\ve)}\varrho d\xi & +\int_{s}^{t}\int_{\mcO}\eta_{r}^{(\ve)}\D e^{\mu_{r}}\varrho d\xi dr-\int_{s}^{t}\int_{\mcO}\td\mu Y_{r}^{(\ve)}\varrho d\xi dr,
\end{align*}
for all $\varrho\in C_{0}^{2}(\mcO)$, $t\ge s\ge0$ and subsequently $Y^{(\ve)}\in W^{1,2}([0,T];H^{-1})$. Continuity of $t\mapsto Y_{t}^{(\ve)}$ in $H^{-1}$ and uniform boundedness in $L^{p}(\mcO)$ then imply weak continuity of $t\mapsto Y_{t}^{(\ve)}$ in $L^{p}(\mcO)$ for all $p\ge1.$

It remains to identify $\eta^{(\ve)}$. For this we consider the convex, lower semicontinuous functional
\begin{align*}
\Psi^{(\ve)}(x) & :=\int_{0}^{T}\int_{\mcO}\psi^{(\ve)}(x_{t}(\xi))d\xi dt,\quad x\in L^{2}([0,T]\times\mcO).
\end{align*}
Then $\partial\Psi^{(\ve)}:L^{2}([0,T]\times\mcO)\to L^{2}([0,T]\times\mcO)$ with 
\begin{align*}
\partial\Psi^{(\ve)}(x) & =\{\eta^{(\ve)}=\phi^{(\ve)}(x)\}
\end{align*}
being a maximal monotone operator. By monotonicity of $\phi^{(\ve)}$ we have
\begin{align*}
\int_{0}^{T}\int_{\mcO}(\phi^{(\ve)}(Y^{(\ve,\d)})-\phi^{(\ve)}(z))(Y^{(\ve,\d)}-z)d\xi dt & \ge0,
\end{align*}
for all $z\in L^{2}([0,T]\times\mcO)$. Taking the limit $\d\to0$ we obtain
\begin{align*}
\int_{0}^{T}\int_{\mcO}(\eta^{(\ve)}-\phi^{(\ve)}(z))(Y^{(\ve)}-z)d\xi dt & \ge0,
\end{align*}
for all $z\in L^{2}([0,T]\times\mcO)$. By maximal monotonicity this gives $\eta^{(\ve)}\in\partial\Psi^{(\ve)}(Y^{(\ve)})$ and thus $\eta^{(\ve)}=\phi^{(\ve)}(Y^{(\ve)})$. In conclusion, $Y^{(\ve)}$ is a solution to \eqref{eq:eps-approx}.

\textbf{Step 3: }Proof of \eqref{eq:lp-bound-eps}, \eqref{eq:del_gradient_bound}

Equation \eqref{eq:lp-bound-eps} follows from \eqref{eq:eps_lp_bound}. From \eqref{eq:eps_del_gradient_bound} and \eqref{eq:phi-approx-error} we have
\begin{align*}
 & \int_{\mcO}\psi(Y_{t}^{(\ve,\d)})d\xi+\int_{0}^{t}\int_{\mcO}e^{\mu_{r}}|\nabla\phi^{(\ve)}(Y_{r}^{(\ve,\d)})|^{2}d\xi dr\\
 & \le\int_{\mcO}\psi(y_{0})d\xi+C\ve+\frac{1}{2}\int_{0}^{t}\int_{\mcO}|\D e^{\mu_{r}}|d\xi dr\\
 & +\d\int_{0}^{t}\int_{\mcO}\psi^{(\ve)}(Y_{r}^{(\ve,\d)})\D e^{\mu_{r}}d\xi dr.
\end{align*}
Integration against a nonnegative testfunction $\eta\in L^{\infty}([0,T])$ with $\|\eta\|_{1}=1$ and taking the limit $\d\to0$ yields
\begin{align*}
 & \int_{0}^{T}\eta_{t}\int_{\mcO}\psi(Y_{t}^{(\ve)})d\xi dt+\int_{0}^{T}\eta_{t}\int_{0}^{t}\int_{\mcO}e^{\mu_{r}}|\nabla\phi^{(\ve)}(Y_{r}^{(\ve)})|^{2}d\xi drdt\\
 & \le\int_{\mcO}\psi(y_{0})d\xi+C\ve+\frac{1}{2}\int_{0}^{T}\eta_{t}\int_{0}^{t}\int_{\mcO}|\D e^{\mu_{r}}|d\xi drdt.
\end{align*}
Since $t\mapsto Y_{t}^{(\ve)}$ is weakly continuous in $L^{p}(\mcO)$ for each $p\ge1$ this implies \eqref{eq:del_gradient_bound}.
\end{proof}

\begin{thm}
\label{thm:exist_transf}Let $y_{0}\in L^{\infty}(\mcO)$. Then there exists a solution $(Y,\eta)$ to \eqref{eq:general_TPME} in the sense of Definition \ref{def:soln} satisfying
\begin{align}
\int_{\mcO}e^{p\td\mu t}|Y_{t}|^{p}d\xi\le\int_{\mcO}|y_{0}|^{p}d\xi.\label{eq:lp-limit}
\end{align}
In addition, $t\mapsto Y_{t}$ is weakly continuous in $L^{p}(\mcO)$ for all $p\ge1$.

The solution $(Y,\eta)$ can be obtained as a strong-weak limit in $L^{2}([0,T];H^{-1})\times L^{2}([0,T];H_{0}^{1}(\mcO))$ of solutions $(Y^{(\ve)},\eta^{(\ve)}=\phi^{(\ve)}(Y^{(\ve)}))$ constructed in Proposition \ref{prop:eps_ex}.\end{thm}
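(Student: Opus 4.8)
The plan is to obtain $(Y,\eta)$ as the limit $\ve\to0$ of the pairs $(Y^{(\ve)},\eta^{(\ve)}=\phi^{(\ve)}(Y^{(\ve)}))$ furnished by Proposition \ref{prop:eps_ex}. First I would record the $\ve$-uniform bounds: \eqref{eq:lp-bound-eps} with $p=2$ gives $Y^{(\ve)}$ bounded in $L^{\infty}([0,T];L^{p}(\mcO))$ for every $p\ge1$; since $e^{\mu_{r}}$ is bounded below by a positive constant on $\mcO_{T}$, \eqref{eq:del_gradient_bound} gives $\eta^{(\ve)}$ bounded in $L^{2}([0,T];H_{0}^{1}(\mcO))$; and then the equation $\frac{d}{dt}Y^{(\ve)}=e^{\mu_{t}}\D\eta^{(\ve)}-\td\mu Y^{(\ve)}$ shows $\frac{d}{dt}Y^{(\ve)}$ is bounded in $L^{2}([0,T];H^{-1})$. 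Hence $t\mapsto Y_{t}^{(\ve)}$ is equicontinuous in $H^{-1}$ with H\"older exponent $\frac{1}{2}$, and for each fixed $t$ the family $\{Y_{t}^{(\ve)}\}$ is bounded in $L^{2}(\mcO)$, which embeds compactly into $H^{-1}(\mcO)$; so by Arzel\`a--Ascoli, along a suitable subsequence (in the sense of Remark \ref{rmk:construction_subseq}) one has $Y^{(\ve)}\to Y$ in $C([0,T];H^{-1})$, together with $Y^{(\ve)}\rightharpoonup^{*}Y$ in $L^{\infty}([0,T];L^{p}(\mcO))$ for all $p\ge1$ and $\eta^{(\ve)}\rightharpoonup\eta$ in $L^{2}([0,T];H_{0}^{1}(\mcO))$.

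Next I would pass to the limit in the weak formulation of \eqref{eq:eps-approx} (the identity in the Remark following Definition \ref{def:soln}, tested against $\varrho\in H_{0}^{1}(\mcO)$): $\int_{\mcO}Y_{t}^{(\ve)}\varrho$ converges by the $H^{-1}$-convergence of $Y_{t}^{(\ve)}$, the term $\int_{0}^{t}\int_{\mcO}\nabla\eta_{r}^{(\ve)}\cdot\nabla(e^{\mu_{r}}\varrho)$ by the weak $L^{2}$-convergence of $\nabla\eta^{(\ve)}$, and $\int_{0}^{t}\int_{\mcO}\td\mu Y_{r}^{(\ve)}\varrho$ by the weak $L^{2}$-convergence of $Y^{(\ve)}$. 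This yields $Y\in L^{2}(\mcO_{T})\cap W^{1,2}([0,T];H^{-1})$ solving the $H^{-1}$-equation of Definition \ref{def:soln} with $\eta\in L^{2}([0,T];H_{0}^{1}(\mcO))$, and weak continuity of $t\mapsto Y_{t}$ in $L^{p}(\mcO)$ follows from $Y\in C([0,T];H^{-1})\cap L^{\infty}([0,T];L^{p}(\mcO))$. For $p>1$ the bound \eqref{eq:lp-limit} follows by taking $\liminf_{\ve\to0}$ in \eqref{eq:lp-bound-eps}: the correction $C\ve^{p-1}\int_{0}^{t}\int_{\mcO}\D e^{\mu_{r}+p\td\mu r}$ vanishes, while $u\mapsto\int_{\mcO}e^{p\td\mu t}|u|^{p}$ is convex, hence weakly lower semicontinuous, and $Y_{t}^{(\ve)}\rightharpoonup Y_{t}$ in $L^{p}(\mcO)$ for each $t$. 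The case $p=1$ then follows by letting $p\downarrow1$, using that \eqref{eq:lp-limit} with $p=2$ already forces $Y_{t}\in L^{\infty}(\mcO)$.

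The hard part will be the identification $\eta_{t}(\xi)\in\sgn(Y_{t}(\xi))$ a.e., since $Y^{(\ve)}$ converges strongly only in $L^{2}([0,T];H^{-1})$, not in $L^{2}(\mcO_{T})$ (there is no $\ve$-uniform $L^{2}([0,T];H_{0}^{1})$ bound on $Y^{(\ve)}$). My plan is to exploit that $\eta^{(\ve)}\rightharpoonup\eta$ weakly in $L^{2}([0,T];H_{0}^{1}(\mcO))$ while $Y^{(\ve)}\to Y$ strongly in $L^{2}([0,T];H^{-1})$, so that the duality pairing passes to the limit,
\[
\int_{0}^{T}\!\int_{\mcO}\eta_{r}^{(\ve)}Y_{r}^{(\ve)}\,d\xi\,dr=\int_{0}^{T}\langle\eta_{r}^{(\ve)},Y_{r}^{(\ve)}\rangle_{H_{0}^{1},H^{-1}}\,dr\ \longrightarrow\ \int_{0}^{T}\!\int_{\mcO}\eta_{r}Y_{r}\,d\xi\,dr.
\]
Then I would write the convexity inequality $\psi^{(\ve)}(z)\ge\psi^{(\ve)}(Y^{(\ve)})+\phi^{(\ve)}(Y^{(\ve)})(z-Y^{(\ve)})$ for an arbitrary $z\in L^{2}(\mcO_{T})$, integrate over $\mcO_{T}$, and let $\ve\to0$: the left side tends to $\int_{0}^{T}\!\int_{\mcO}|z|$ by dominated convergence ($0\le\psi^{(\ve)}\le|\cdot|$ and $\psi^{(\ve)}\to|\cdot|$); $\int_{0}^{T}\!\int_{\mcO}\phi^{(\ve)}(Y^{(\ve)})z=\int_{0}^{T}\!\int_{\mcO}\eta^{(\ve)}z$ converges by weak $L^{2}$-convergence of $\eta^{(\ve)}$; $\int_{0}^{T}\!\int_{\mcO}\phi^{(\ve)}(Y^{(\ve)})Y^{(\ve)}=\int_{0}^{T}\!\int_{\mcO}\eta^{(\ve)}Y^{(\ve)}$ converges by the display above; and $\liminf_{\ve}\int_{0}^{T}\!\int_{\mcO}\psi^{(\ve)}(Y^{(\ve)})\ge\int_{0}^{T}\!\int_{\mcO}|Y|$ by the estimate $\psi^{(\ve)}\ge|\cdot|-2\ve$ from \eqref{eq:phi-approx-error} together with weak lower semicontinuity of $u\mapsto\int_{0}^{T}\!\int_{\mcO}|u|$ on $L^{2}(\mcO_{T})$. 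This gives $\int_{0}^{T}\!\int_{\mcO}|z|\ge\int_{0}^{T}\!\int_{\mcO}|Y|+\int_{0}^{T}\!\int_{\mcO}\eta(z-Y)$ for all $z\in L^{2}(\mcO_{T})$, i.e. $\eta\in\partial\Psi(Y)$ with $\Psi(u):=\int_{0}^{T}\!\int_{\mcO}|u|$, which is precisely $\eta_{t}(\xi)\in\sgn(Y_{t}(\xi))$ a.e. The strong--weak convergence in $L^{2}([0,T];H^{-1})\times L^{2}([0,T];H_{0}^{1}(\mcO))$ asserted at the end of the theorem is then part of what has already been established.
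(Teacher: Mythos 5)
Your proposal is correct, and its overall skeleton coincides with the paper's: the same a priori bounds \eqref{eq:lp-bound-eps}, \eqref{eq:del_gradient_bound}, extraction of a weak-$*$ limit in $L^{\infty}([0,T];L^{p}(\mcO))$, a weak limit of $\eta^{(\ve)}$ in $L^{2}([0,T];H_{0}^{1}(\mcO))$, passage to the limit in the weak formulation, and weak $L^{p}$-continuity from $C([0,T];H^{-1})\cap L^{\infty}([0,T];L^{p}(\mcO))$. The differences are local but genuine. For compactness you use Arzel\`a--Ascoli in $C([0,T];H^{-1})$ (equicontinuity from the $W^{1,2}([0,T];H^{-1})$ bound plus compactness of $L^{2}(\mcO)\hookrightarrow H^{-1}$), whereas the paper uses Aubin--Lions to get strong convergence in $L^{2}([0,T];H^{-1})$ together with pointwise weak $H^{-1}$ convergence via an equicontinuity argument; both suffice and yours is marginally stronger. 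The main divergence is the identification $\eta\in\sgn(Y)$: the paper verifies $\Psi^{(\ve)}\to\Psi$ in Mosco sense, invokes strong graph convergence of $\partial\Psi^{(\ve)}\to\partial\Psi$ (Attouch), and closes with Minty's trick against arbitrary elements of the limit graph; you instead integrate the pointwise convexity inequality $\psi^{(\ve)}(z)\ge\psi^{(\ve)}(Y^{(\ve)})+\phi^{(\ve)}(Y^{(\ve)})(z-Y^{(\ve)})$ and pass to the limit directly, which avoids the Mosco/graph-convergence machinery at the cost of needing weak lower semicontinuity of $u\mapsto\int|u|$ and the error bound \eqref{eq:phi-approx-error}. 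Both arguments hinge on the same key device, which you make explicit and the paper leaves implicit: the product term $\int\eta^{(\ve)}Y^{(\ve)}$ converges because $\eta^{(\ve)}\rightharpoonup\eta$ in $L^{2}([0,T];H_{0}^{1}(\mcO))$ while $Y^{(\ve)}\to Y$ strongly in $L^{2}([0,T];H^{-1})$, compensating for the missing strong $L^{2}(\mcO_{T})$ convergence of $Y^{(\ve)}$. For \eqref{eq:lp-limit} you use pointwise-in-$t$ weak $L^{p}$ convergence plus weak lower semicontinuity, while the paper first gets the bound for a.e.\ $t$ by testing in time and then uses weak continuity; again both work. One small repair: your parenthetical claim that \eqref{eq:lp-limit} with $p=2$ ``forces $Y_{t}\in L^{\infty}(\mcO)$'' is not what that inequality gives, but the $p=1$ case needs no such fact --- either argue as the paper does directly for $p=1$ (weak $L^{1}$ convergence follows from weak $L^{2}$ convergence on the bounded domain), or let $p\downarrow1$ using Fatou on the left and dominated convergence on the right (possible since $y_{0}\in L^{\infty}(\mcO)$ and $|\mcO|<\infty$).
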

\begin{proof}
Let $(Y^{(\ve)},\eta^{(\ve)})$ be solutions to \eqref{eq:eps-approx} as constructed in Proposition \ref{prop:eps_ex}. By \eqref{eq:lp-bound-eps}, \eqref{eq:del_gradient_bound} and Aubin-Lions compactness we may extract subsequences such that
\begin{align*}
Y^{(\ve)} & \rightharpoonup^{*}Y,\quad\text{in }L^{\infty}([0,T];L^{p}(\mcO)),\ \forall p\ge1,\\
Y^{(\ve)} & \to Y,\quad\text{in }L^{2}([0,T];H^{-1}),\\
\phi^{(\ve)}(Y^{(\ve)}) & \rightharpoonup\eta,\quad\text{in }L^{2}([0,T];H_{0}^{1}(\mcO)),\ \text{for }\d\to0.
\end{align*}
As in Proposition \ref{prop:eps_ex} we have
\[
Y_{t}^{(\ve)}\rightharpoonup Y_{t},\quad\text{in }H^{-1},\text{ for all }t\in[0,T].
\]
We may then argue as in Proposition \ref{prop:eps_ex} to obtain
\[
\frac{d}{dt}Y_{t}=e^{\mu_{t}}\D\eta_{t}-\td\mu Y_{t},\quad\text{in }H^{-1}
\]
for a.e. $t\in[0,T]$ and $Y\in W^{1,2}([0,T];H^{-1})$. In particular, $Y\in C([0,T];H^{-1})$ which implies weak continuity of $t\mapsto Y_{t}$ in $L^{p}(\mcO)$ due to the $L^{\infty}([0,T];L^{p}(\mcO))$ boundedness.

In order to characterize the limit $\eta$ we may argue similar to Proposition \ref{prop:eps_ex}. For this we consider the convex, lower semicontinuous functionals
\begin{align*}
\Psi(x) & :=\int_{0}^{T}\int_{\mcO}\psi(x_{t}(\xi))d\xi dt,\\
\Psi^{(\ve)}(x) & :=\int_{0}^{T}\int_{\mcO}\psi^{(\ve)}(x_{t}(\xi))d\xi dt,\quad x\in L^{2}([0,T]\times\mcO).
\end{align*}
Then $\partial\Psi,\partial\Psi^{(\ve)}:L^{2}([0,T]\times\mcO)\to L^{2}([0,T]\times\mcO)$ with 
\begin{align*}
\partial\Psi(x) & =\{\eta\in L^{2}([0,T]\times\mcO)|\eta_{t}(\xi)\in\phi(x_{t}(\xi)),\ \text{a.e. }(t,\xi)\in[0,T]\times\mcO\}\\
\partial\Psi^{(\ve)}(x) & =\{\eta^{(\ve)}=\phi^{(\ve)}(x)\}
\end{align*}
being maximal monotone operators. Due to \eqref{eq:phi-approx-error} we have
\[
|\Psi^{\ve}(x)-\Psi(x)|\le\int_{\mcO_{T}}|\psi^{(\ve)}(x_{t}(\xi))-\psi(x_{t}(\xi))|d\xi\otimes dt\le|\mcO_{T}|\ve,\quad\forall x\in L^{2}(\mcO_{T}).
\]
Hence, $\Psi^{\ve}\to\Psi$ in Mosco sense, and thus $\partial\Psi^{\ve}\to\partial\Psi$ in strong graph sense (cf. \cite[Theorem 3.66]{A84}), i.e. for all $(\td z,\td\eta)\in\partial\Psi$ there are $(\td z^{\ve},\td\eta^{\ve}=\phi^{(\ve)}(\td z^{\ve}))\in\partial\Psi^{\ve}$ such that $\td z^{\ve}\to\td z$, $\td\eta^{\ve}\to\td\eta$ in $L^{2}([0,T]\times\mcO)$. By monotonicity of $\phi^{(\ve)}$ we have
\begin{align*}
\int_{0}^{T}\int_{\mcO}(\phi^{(\ve)}(Y^{(\ve)})-\phi^{(\ve)}(\td z^{(\ve)}))(Y^{(\ve)}-\td z^{(\ve)})d\xi dt & \ge0.
\end{align*}
Taking the limit $\ve\to0$ we obtain
\begin{align*}
\int_{0}^{T}\int_{\mcO}(\eta-\td\eta)(Y-\td z)d\xi dt & \ge0,
\end{align*}
for all $(\td z,\td\eta)\in\partial\Psi$. By maximal monotonicity this gives $\eta\in\partial\Psi(Y)$ which implies $\eta_{t}(\xi)\in\phi(Y_{t}(\xi))$ for a.e. $(t,\xi)\in\mcO_{T}$. In conclusion, $(Y,\eta)$ is a solution to \eqref{eq:TPME}.

As in the proof of \eqref{eq:del_gradient_bound}, taking $\ve\to0$ in \eqref{eq:lp-bound-eps} yields \eqref{eq:lp-limit} for almost all $t\ge0$. Then using weak lower-semicontinuity of $x\mapsto\int_{\mcO}e^{p\td\mu t}|x|^{p}d\xi$ on $L^{p}(\mcO)$ and weak continuity of $t\mapsto Y_{t}$ in $L^{p}(\mcO)$ we obtain \eqref{eq:lp-limit} for all $t\ge0$. 
\end{proof}

\section{Transformation\label{sec:tranformation}}

In this section we will give a rigorous justification of the transformation $Y_{t}:=e^{\mu_{t}}X_{t}$ leading to the transformed equation \eqref{eq:TPME-intro-1}, i.e. to
\begin{align}
\partial_{t}Y_{t} & \in e^{\mu_{t}}\D\sgn(Y_{t})-\td\mu Y_{t},\quad\text{on }\mcO_{T}\label{eq:TPME-intro-2}\\
0 & \in\sgn(Y_{t}),\quad\text{on }\partial\mcO.\nonumber 
\end{align}
Since we aim to eventually deduce statements for $X$ from $Y$ we only require the ``back-transformation'', i.e. we aim to show that if $Y$ is a solution to \eqref{eq:TPME-intro-2} constructed in Section \ref{sec:existence} along an appropriate sequence $\ve_{n}\to0$ then $X_{t}:=e^{-\mu_{t}}Y_{t}$ is a solution to \eqref{eq:BTW_SOC}, i.e. to
\begin{align}
dX_{t} & \in\D\sgn(X_{t})dt+\sum_{k=1}^{N}f_{k}X_{t}d\b_{t}^{k},\quad\text{on }\mcO_{T}\label{eq:BTW_SOC-1}\\
0 & \in\sgn(X_{t}),\quad\text{on }\partial\mcO.\nonumber 
\end{align}
In the following, let $f=(f_{k})_{k=1,\dots,N}\in C^{2}(\mcO;\R^{N})$ and $\b=(\b^{k})_{k=1,\dots,N}$ be a standard $\R^{N}$-valued Brownian motion. As before we set $\mu_{t}=-\sum_{k=1}^{N}f_{k}\b_{t}^{k}$ and $\td\mu=\frac{1}{2}\sum_{k=1}^{N}f_{k}^{2}$. Let $S=L^{2}(\mcO)$ and consider the Gelfand triple
\[
S\subseteq H^{-1}\subseteq S^{*}.
\]
Multivalued stochastic evolution inclusions of the type \eqref{eq:BTW_SOC-1} have been studied in \cite{GT11}. In order to also cover approximations to \eqref{eq:BTW_SOC-1} we will recall the setting introduced in \cite[Section 7.1]{GT11} for the more general SPDE of the type 
\begin{align}
dX_{t} & \in\D\phi(e^{\mu_{t}}X_{t})dt+\sum_{k=1}^{N}f_{k}X_{t}d\b_{t}^{k},\quad\text{on }\mcO_{T}\label{eq:general-SFDE}\\
0 & \in\phi(e^{\mu_{t}}X_{t}),\quad\text{on }\partial\mcO,\nonumber 
\end{align}
where $\phi=\partial\psi:\R\to2^{\R}$ is the subgradient of an even, convex, continuous function $\psi$ with $\psi(0)=0$, and for all $\eta\in\phi(r)$: 
\begin{equation}
|\eta|\le C(|r|+1),\quad\forall r\in\R.\label{eq:growth_cdt}
\end{equation}
We then define $\vp(t,u):=\int_{\mcO}\psi(e^{\mu_{t}}u)d\xi$ for $u\in S,t\in[0,T]$ and let $A(t):=\partial\vp(t,\cdot):S\to2^{S^{*}}$. We note
\[
A(t,u)=\{v\in S^{*}|v(\xi)\in\phi(e^{\mu_{t}}u(\xi)),\text{ a.e. }\xi\in\mcO\}
\]
and the growth condition \eqref{eq:growth_cdt} implies
\begin{equation}
\|\eta_{t}\|_{S^{*}}\le C(1+\|e^{-\mu_{t}}\|_{\infty}\|u\|_{S}),\quad\forall\eta_{t}\in A(t,u),\ t\in[0,T].\label{eq:A-growth}
\end{equation}
For $v\in A(t,u)$ we have
\[
\ _{S^{*}}\<v,w\>_{S}=\int_{\mcO}v(\xi)w(\xi)d\xi.
\]

\begin{defn}
\label{def:stoch_soln}A continuous $\bar{\mcF}_{t}$-adapted process $X:[0,T]\times\O\to H^{-1}$ is a solution to \eqref{eq:general-SFDE} if $X\in L^{2}([0,T]\times\O;S)$ and there is an $\eta\in L^{2}([0,T]\times\O;S^{*})$ such that $X$ solves the following integral equation (in $S^{*}$)
\begin{equation}
X_{t}=x_{0}-\int_{0}^{t}\eta_{r}dr+\sum_{k=1}^{N}\int_{0}^{t}f_{k}X_{r}d\b_{r}^{k},\label{eq:stoch_soln}
\end{equation}
$\P$-a.s. for each $t\in[0,T]$ and $\eta\in A(X)$, $dt\otimes\P$-almost everywhere.
\end{defn}
Note that since \eqref{eq:stoch_soln} is satisfied in $S^{*}$, implicitly the Riesz map $\iota=(-\D)^{-1}:H^{-1}\to H_{0}^{1}(\mcO)$ is applied to $X$. Hence, \eqref{eq:stoch_soln} reads
\[
(-\D)^{-1}X_{t}=(-\D)^{-1}x_{0}-\int_{0}^{t}\eta_{r}dr+(-\D)^{-1}\sum_{k=1}^{N}\int_{0}^{t}f_{k}X_{r}d\b_{r}^{k},
\]
again as an equation in $S^{*}$. As applied to $-\D\varrho\in S$ this yields
\[
\int_{\mcO}X_{t}\varrho d\xi=\int_{\mcO}x_{0}\varrho d\xi+\int_{0}^{t}\int_{\mcO}\eta_{r}\D\varrho dr+\sum_{k=1}^{N}\int_{0}^{t}\int_{\mcO}f_{k}X_{r}\varrho d\xi d\b_{r}^{k},
\]
for all $\varrho\in H^{2}(\mcO)\cap H_{0}^{1}(\mcO)$ and a.e. $t\in[0,T]$. Hence, we obtain
\begin{rem}
\label{rmk:weak_form_of_dual_form} Let $X,\eta$ as in Definition \ref{def:stoch_soln}. Then $X$ is a solution to \eqref{eq:general-SFDE} iff
\[
(X_{t},\varrho)_{2}=(x_{0},\varrho)_{2}+\int_{0}^{t}(\eta_{r},\D\varrho)_{2}dr+\sum_{k=1}^{N}\int_{0}^{t}(f_{k}X_{r},\varrho)_{2}d\b_{r}^{k},\quad\text{for a.e. }t\in[0,T]
\]
$\P$-a.s., for all $\varrho\in H^{2}(\mcO)\cap H_{0}^{1}(\mcO)$. 
\end{rem}
Using Itô's formula for Gelfand triples (cf. \cite[Theorem 4.2.5]{PR07}) and monotonicity of the operator $\D\phi:S\to2^{S^{*}}$ yields
\begin{lem}
Solutions to \eqref{eq:general-SFDE} are unique.
\end{lem}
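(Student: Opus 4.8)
The plan is to prove uniqueness by testing the difference of two solutions against itself in $H^{-1}$, exactly as in Proposition~\ref{prop:uniqueness_general_TPME}, but now accounting for the stochastic terms via the It\^o formula for Gelfand triples. Let $(X^{(1)},\eta^{(1)})$ and $(X^{(2)},\eta^{(2)})$ be two solutions to \eqref{eq:general-SFDE} with the same initial datum $x_0$, and set $Z:=X^{(1)}-X^{(2)}$, $\zeta:=\eta^{(1)}-\eta^{(2)}$. From \eqref{eq:stoch_soln} we have, in $S^{*}$,
\[
Z_{t}=-\int_{0}^{t}\zeta_{r}\,dr+\sum_{k=1}^{N}\int_{0}^{t}f_{k}Z_{r}\,d\b_{r}^{k}.
\]
First I would apply \cite[Theorem 4.2.5]{PR07} to $\|Z_{t}\|_{H^{-1}}^{2}$, the drift contribution being $2\,{}_{S^{*}}\langle -\zeta_{r},Z_{r}\rangle_{S}$ (recall the pairing here is realized with the Riesz map $(-\Delta)^{-1}$), and the martingale plus quadratic-variation contributions coming from $\sum_{k}f_{k}Z_{r}\,d\b_{r}^{k}$.

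The key point is the sign of the drift term. Since $\eta^{(i)}_{r}\in A(r,X^{(i)}_{r})$ means $\eta^{(i)}_{r}(\xi)\in\phi(e^{\mu_{r}}X^{(i)}_{r}(\xi))$ a.e., monotonicity of $\phi$ gives
\[
{}_{S^{*}}\langle \zeta_{r},Z_{r}\rangle_{S}=\int_{\mcO}(\eta^{(1)}_{r}-\eta^{(2)}_{r})(X^{(1)}_{r}-X^{(2)}_{r})\,d\xi=\int_{\mcO}e^{-\mu_{r}}(\eta^{(1)}_{r}-\eta^{(2)}_{r})(e^{\mu_{r}}X^{(1)}_{r}-e^{\mu_{r}}X^{(2)}_{r})\,d\xi\ge 0,
\]
using $e^{-\mu_{r}}>0$ and monotonicity of $\phi$ at each point; this is precisely the role of the ``$\Delta\phi$ monotone'' observation flagged before the statement. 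Hence the drift term is $\le 0$ and can be dropped. For the noise I would write the It\^o correction as $\sum_{k}\int_{0}^{t}\|f_{k}Z_{r}\|_{H^{-1}}^{2}\,dr$ and bound it by $C\int_{0}^{t}\|Z_{r}\|_{H^{-1}}^{2}\,dr$, using that multiplication by $f_{k}\in C^{2}(\overline{\mcO})$ (in particular bounded) is a bounded operator on $H^{-1}$; the stochastic integral $\sum_{k}\int_{0}^{t}2\,{}_{S^{*}}\langle f_{k}Z_{r},Z_{r}\rangle_{S}\,d\b^{k}_{r}$ is a local martingale.

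Putting this together yields $\mathbb{E}\|Z_{t\wedge\tau_{n}}\|_{H^{-1}}^{2}\le C\int_{0}^{t}\mathbb{E}\|Z_{r\wedge\tau_{n}}\|_{H^{-1}}^{2}\,dr$ along a localizing sequence $\tau_{n}$, and Gronwall's inequality together with Fatou gives $Z\equiv 0$, hence $X^{(1)}=X^{(2)}$ in $C([0,T];H^{-1})$, $\P$-a.s. I expect the only genuinely delicate point to be the verification that the setting of \cite[Theorem 4.2.5]{PR07} applies — i.e.\ that $Z$ indeed has an $H^{-1}$-valued It\^o-process structure with the $S$-valued ``drift'' $-\zeta\in L^{2}([0,T]\times\O;S^{*})$ and $S$-valued diffusion coefficients, so that the Gelfand-triple It\^o formula is legitimate and the cross term $\langle\zeta_{r},Z_{r}\rangle_{S}$ is well defined as the $L^{2}(\mcO)$ inner product; the boundedness of multiplication by $f_{k}$ on $H^{-1}$ (which uses $f_{k}\in C^{1}$) and the growth bound \eqref{eq:A-growth} ensuring $\zeta\in L^{2}([0,T]\times\O;S^{*})$ are the technical inputs making this rigorous. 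Everything else is the standard monotone-operator uniqueness computation.
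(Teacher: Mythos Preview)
Your proposal is correct and follows precisely the route the paper indicates: apply the It\^o formula for Gelfand triples \cite[Theorem 4.2.5]{PR07} to $\|X^{(1)}_t-X^{(2)}_t\|_{H^{-1}}^2$, use monotonicity of $\phi$ (equivalently of $A(t,\cdot)=\partial\varphi(t,\cdot)$) to discard the drift, bound the It\^o correction linearly in $\|Z_r\|_{H^{-1}}^2$, and conclude by Gronwall. The paper itself gives no further detail beyond naming these two ingredients, so your write-up is in fact more complete than the paper's; the only cosmetic slip is that the martingale term in the Gelfand-triple It\^o formula is $2\int_0^t (Z_r,f_k Z_r)_{H^{-1}}\,d\beta_r^k$ rather than a ${}_{S^*}\langle\cdot,\cdot\rangle_S$ pairing, but this does not affect the argument.
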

From \cite[Example 7.3]{GT11} we have
\begin{prop}
Let $x_{0}\in L^{2}(\mcO)$. Then there is a unique solution $X$ to \eqref{eq:BTW_SOC-1} in the sense of Definition \ref{def:stoch_soln}. 
\end{prop}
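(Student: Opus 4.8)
Uniqueness is already settled by the preceding lemma (It\^o's formula in the Gelfand triple $S\subseteq H^{-1}\subseteq S^{*}$ together with monotonicity of $\D\phi$), so it suffices to produce existence, and this is exactly \cite[Example 7.3]{GT11}: equation \eqref{eq:BTW_SOC-1} is the special case $\mu\equiv0$, $\phi=\sgn=\partial\psi$ with $\psi(r)=|r|$ of \eqref{eq:general-SFDE}, and since $\psi$ is even, convex and continuous with $\psi(0)=0$, and $|\eta|\le1$ for every $\eta\in\sgn(r)$, the growth condition \eqref{eq:growth_cdt} holds trivially, so the hypotheses of that reference are met. For completeness I would indicate the method behind it.

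First I would regularize $\sgn$ by the Yosida approximation $\phi^{(\ve)}$ from Section~\ref{sec:construction}---monotone, globally Lipschitz, $|\phi^{(\ve)}|\le1$, $\phi^{(\ve)}(0)=0$---and solve
\[
dX_t^{(\ve)}=\D\phi^{(\ve)}(X_t^{(\ve)})\,dt+\sum_{k=1}^{N}f_kX_t^{(\ve)}\,d\b_t^k,\qquad X_0^{(\ve)}=x_0 .
\]
Since $\phi^{(\ve)}$ is single-valued and Lipschitz, the drift $u\mapsto\D\phi^{(\ve)}(u)$ is a hemicontinuous monotone operator $S\to S^{*}$ with ${}_{S^{*}}\langle\D\phi^{(\ve)}(u),u\rangle_{S}=-(\phi^{(\ve)}(u),u)_{2}\le0$, and $u\mapsto(f_ku)_{k}$ is Lipschitz from $H^{-1}$ into the Hilbert--Schmidt operators; the variational theory for monotone SPDE underlying \cite{GT11} (cf.\ also \cite{PR07}) then yields a unique $\bar{\mcF}_t$-adapted solution $X^{(\ve)}\in L^{2}(\O;C([0,T];H^{-1}))\cap L^{2}([0,T]\times\O;S)$. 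Applying It\^o's formula for the Gelfand triple to $\|X_t^{(\ve)}\|_{H^{-1}}^{2}$ and using $\sum_k\|f_ku\|_{H^{-1}}^{2}\lesssim\|u\|_{H^{-1}}^{2}$, Gronwall's inequality and the Burkholder--Davis--Gundy inequality gives $\ve$-uniform bounds on $\mathbb{E}\big[\sup_{t\le T}\|X_t^{(\ve)}\|_{H^{-1}}^{2}\big]$ and on $\mathbb{E}\int_0^T(\phi^{(\ve)}(X_r^{(\ve)}),X_r^{(\ve)})_2\,dr$, while $|\phi^{(\ve)}|\le1$ bounds $\phi^{(\ve)}(X^{(\ve)})$ in $L^{\infty}([0,T]\times\O\times\mcO)$.

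Next I would extract weak limits $X^{(\ve)}\rightharpoonup X$ (weak-$*$ in $L^{2}(\O;L^{\infty}([0,T];H^{-1}))$ and weakly in $L^{2}([0,T]\times\O;S)$) and $\phi^{(\ve)}(X^{(\ve)})\rightharpoonup\eta$, and pass to the limit in the linear integral equation of Definition~\ref{def:stoch_soln}; all terms, including the stochastic integral (by It\^o's isometry, since $f_kX^{(\ve)}\rightharpoonup f_kX$), are linear, so $(X,\eta)$ satisfies \eqref{eq:stoch_soln} with $X\in W^{1,2}([0,T];S^{*})\cap L^{2}([0,T];S)$, whence $t\mapsto X_t$ is $H^{-1}$-continuous after adding the martingale term, and $\bar{\mcF}_t$-adaptedness is inherited from the $X^{(\ve)}$. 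What remains---and this is the main obstacle---is to identify $\eta_t(\xi)\in\sgn(X_t(\xi))$ a.e.\ by Minty's monotonicity trick in the stochastic setting: one must show $\limsup_{\ve\to0}\mathbb{E}\int_0^T(\phi^{(\ve)}(X_r^{(\ve)}),X_r^{(\ve)})_2\,dr\le\mathbb{E}\int_0^T(\eta_r,X_r)_2\,dr$, obtained by writing both sides through the It\^o energy identity for $\|X_t^{(\ve)}\|_{H^{-1}}^{2}$ and $\|X_t\|_{H^{-1}}^{2}$ and passing to the limit; the delicate point is the convergence of the It\^o-correction terms $\mathbb{E}\int_0^T\sum_k\|f_kX_r^{(\ve)}\|_{H^{-1}}^{2}\,dr$, which forces one to upgrade $X^{(\ve)}\rightharpoonup X$ to strong convergence in $L^{2}([0,T]\times\O;H^{-1})$, e.g.\ via a stochastic compactness/Gy\"ongy--Krylov argument on $(X^{(\ve)},X^{(\ve')})$ or directly from the monotone structure. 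Granting this, monotonicity of $\phi^{(\ve)}$ gives $\mathbb{E}\int_0^T(\phi^{(\ve)}(X^{(\ve)})-\phi^{(\ve)}(z),X^{(\ve)}-z)_2\,dt\ge0$ for all $z\in L^{2}([0,T]\times\O\times\mcO)$; letting $\ve\to0$, using graph convergence $\phi^{(\ve)}\to\sgn$ as in the proof of Theorem~\ref{thm:exist_transf}, yields $\mathbb{E}\int_0^T(\eta-\td\eta,X-z)_2\,dt\ge0$ for all $(z,\td\eta)$ with $\td\eta\in\sgn(z)$ a.e., and maximal monotonicity of the sign graph on $L^{2}([0,T]\times\O\times\mcO)$ forces $\eta\in\sgn(X)$ a.e. As an alternative, and in the spirit of the present paper, existence can instead be read off from Section~\ref{sec:existence}: for the solution $Y$ of \eqref{eq:TPME-intro-2} from Theorem~\ref{thm:exist_transf} one sets $X_t:=e^{-\mu_t}Y_t$ and applies the It\^o product rule together with $de^{-\mu_t}=e^{-\mu_t}\big(\sum_kf_k\,d\b_t^k+\td\mu\,dt\big)$ to obtain $dX_t=\D\eta_t\,dt+\sum_kf_kX_t\,d\b_t^k$ with $\eta_t\in\sgn(Y_t)=\sgn(X_t)$ (no cross variation, since $Y$ is absolutely continuous in $t$ in $H^{-1}$); here the work shifts to establishing $\bar{\mcF}_t$-adaptedness of $Y$---which is why $Y$ has to be built along a fixed deterministic sequence $\ve_n\to0$ and uniqueness of the limit invoked (cf.\ Remark~\ref{rmk:construction_subseq})---and to justifying the product rule at the level of the Gelfand triple, which is precisely the content of the remainder of Section~\ref{sec:tranformation}.
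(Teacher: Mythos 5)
Your proposal is correct and follows essentially the same route as the paper: uniqueness comes from the preceding lemma (It\^o's formula in the Gelfand triple plus monotonicity of $\D\phi$), and existence is obtained by verifying that $\psi(r)=|r|$, $\phi=\sgn$ satisfy the standing assumptions (in particular the growth condition \eqref{eq:growth_cdt}, trivially, since $|\eta|\le1$) and invoking \cite[Example 7.3]{GT11} -- which is exactly all the paper does. Your additional sketch of the Yosida/Minty argument behind that reference, with the caveat about needing strong $L^{2}([0,T]\times\O;H^{-1})$ convergence to handle the It\^o correction, is a reasonable account but is not required for the statement as proved in the paper.
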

We now proceed to the justification of the transformation $X_{t}:=e^{-\mu_{t}}Y_{t}$. In different contexts analogous transformations have been used e.g. in \cite{BDPR09-2,BR13,G13-2}. In the present situation the proof is more involved, since no uniqueness result for \eqref{eq:TPME-intro-2} is known. 
\begin{thm}
\label{thm:transformation}Let $x_{0}\in L^{\infty}(\mcO)$. Then there is an $\bar{\mcF}_{t}$-adapted solution $Y$ to \eqref{eq:TPME-intro-2} constructed as in Section \ref{sec:existence}. Moreover, $X_{t}:=e^{-\mu_{t}}Y_{t}$ is the unique solution to \eqref{eq:BTW_SOC-1} in the sense of Definition \ref{def:stoch_soln}.
\end{thm}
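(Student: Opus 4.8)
The plan is to verify the two assertions of the theorem separately: first that the solution $Y$ to \eqref{eq:TPME-intro-2} obtained in Theorem \ref{thm:exist_transf} can be chosen to be $\bar{\mcF}_t$-adapted, and second that $X_t := e^{-\mu_t} Y_t$ solves \eqref{eq:BTW_SOC-1} in the sense of Definition \ref{def:stoch_soln}; uniqueness of $X$ then follows from the uniqueness lemma for \eqref{eq:general-SFDE} already recorded above.

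For the first part, I would work along the approximation scheme of Section \ref{sec:construction}. For fixed $\omega$, the construction of $Y$ is purely deterministic and depends on $\omega$ only through the path $t\mapsto \mu_t(\omega) = -\sum_k f_k \b^k_t(\omega)$ and $\td\mu$. The subtle point, flagged in Remark \ref{rmk:construction_subseq}, is that $Y$ is extracted via subsequences, so I must pin down a construction that is measurable in $\omega$. The approach is to first observe that for each fixed $\ve,\d>0$, the solution $Y^{(\tau,\ve,\d)}$ of the nondegenerate, smooth, Lipschitz equation \eqref{eq:tau_eps_delta-approx} depends continuously (hence measurably) on the path $\mu^{(\tau)}$, and letting $\tau\to 0$ the whole sequence converges (uniqueness for \eqref{eq:eps_delta-approx}), so $Y^{(\ve,\d)}$ is $\bar{\mcF}_t$-adapted. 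Uniqueness for \eqref{eq:eps-approx} then gives adaptedness of $Y^{(\ve)}$ after sending $\d\to0$ along the full sequence. The only place a genuine subsequence is needed is the final limit $\ve\to 0$, where no uniqueness is available. Here I would fix in advance a deterministic sequence $\ve_n\to0$ and apply a measurable selection argument: the set of subsequential weak-$*$ limit points is nonempty and, viewing the whole procedure as a measurable multifunction of $\omega$ into a suitable Polish space (e.g. $C([0,T];H^{-1})$ with the weak topology on bounded sets, or $L^2(\mcO_T)$ with weak topology, metrizable on bounded sets by the a priori bounds of Proposition \ref{prop:eps_ex}), a measurable selection theorem (Kuratowski–Ryll-Nardzewski) yields a single $\bar{\mcF}_t$-adapted choice of $Y$. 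I expect this measurability bookkeeping to be the main obstacle; everything else is soft.

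For the second part, the strategy is again to pass to the limit in the approximations, but now keeping track of the stochastic structure. Set $X^{(\ve)}_t := e^{-\mu_t} Y^{(\ve)}_t$; since $Y^{(\ve)}$ solves the random PDE \eqref{eq:eps-approx} and $t\mapsto\mu_t$ is a (continuous semimartingale) Brownian functional, I would apply the It\^o product rule for the Gelfand-triple pairing (using It\^o's formula in the form of \cite[Theorem 4.2.5]{PR07}, or directly the classical product rule on the smooth level \eqref{eq:tau_eps_delta-approx} and then pass to the limit) to $e^{-\mu_t}Y^{(\ve)}_t$. The defining relation of \eqref{eq:eps-approx}, namely $\partial_t Y^{(\ve)}_t = e^{\mu_t}\D\phi^{(\ve)}(Y^{(\ve)}_t) - \td\mu Y^{(\ve)}_t$, combined with the It\^o correction $d(e^{-\mu_t}) = e^{-\mu_t}(-d\mu_t + \td\mu\,dt)$ and the precise choices $\mu_t = -\sum_k f_k\b^k_t$, $\td\mu = \tfrac12\sum_k f_k^2$, should make the drift terms $\td\mu Y^{(\ve)}$ and the quadratic-variation term cancel exactly, leaving
\begin{equation*}
dX^{(\ve)}_t = \D\phi^{(\ve)}(e^{\mu_t}X^{(\ve)}_t)\,dt + \sum_{k=1}^N f_k X^{(\ve)}_t\,d\b^k_t,
\end{equation*}
i.e. $X^{(\ve)}$ solves \eqref{eq:general-SFDE} with nonlinearity $\phi^{(\ve)}$, in the sense of Definition \ref{def:stoch_soln} (equivalently in the weak form of Remark \ref{rmk:weak_form_of_dual_form}). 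This identity I would verify first in the fully smoothed equation \eqref{eq:tau_eps_delta-approx}, where the classical chain rule applies, and then carry it through the limits $\tau\to0$, $\d\to0$ using the convergences established in Lemma \ref{lem:eps_del_ex} and Proposition \ref{prop:eps_ex}.

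Finally, I would send $\ve\to0$. Using the strong convergence $Y^{(\ve)}\to Y$ in $L^2([0,T];H^{-1})$ and the weak convergence $\phi^{(\ve)}(Y^{(\ve)})=\eta^{(\ve)}\rightharpoonup\eta$ in $L^2([0,T];H^1_0(\mcO))$ from Theorem \ref{thm:exist_transf}, together with the uniform $L^\infty([0,T];L^p(\mcO))$ bounds (which control $X^{(\ve)}=e^{-\mu_t}Y^{(\ve)}$ and the stochastic integrals via the It\^o isometry, since $f_k$ are bounded), I can pass to the limit in the weak formulation of Remark \ref{rmk:weak_form_of_dual_form}. The drift term $\int_0^t(\eta^{(\ve)}_r,\D\varrho)_2\,dr \to \int_0^t(\eta_r,\D\varrho)_2\,dr$ by weak convergence, the stochastic term converges in $L^2(\Omega)$ by the isometry and the $L^2([0,T]\times\Omega;L^2(\mcO))$ convergence of $X^{(\ve)}$, and the characterization $\eta_t(\xi)\in\sgn(Y_t(\xi))=\phi(e^{\mu_t}X_t(\xi))$ is exactly the inclusion already proved in Theorem \ref{thm:exist_transf}. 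This identifies $X_t = e^{-\mu_t}Y_t$ as a solution of \eqref{eq:BTW_SOC-1}; the uniqueness lemma then forces it to coincide with the unique solution, completing the proof. The only potential snag here is ensuring the $\bar{\mcF}_t$-adaptedness and continuity (in $H^{-1}$) of $X$ survive the limit — but adaptedness is guaranteed by the first part of the argument, and $H^{-1}$-continuity follows from $X\in W^{1,2}([0,T];H^{-1})\subseteq C([0,T];H^{-1})$ for the deterministic drift part plus continuity of the stochastic integral.
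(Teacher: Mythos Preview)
Your overall architecture --- adaptedness of the approximants $Y^{(\tau,\ve,\d)}$, $Y^{(\ve,\d)}$, $Y^{(\ve)}$ via uniqueness, then an It\^o product rule to show that $X^{(\ve)}:=e^{-\mu}Y^{(\ve)}$ solves the approximating SPDE, then a limit $\ve\to0$ --- matches the paper. The It\^o computation you sketch is exactly Proposition~\ref{prop:transformation}. But there is a genuine gap at the step you yourself flag as ``the main obstacle''.

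Your measurable-selection argument does not give $\bar{\mcF}_t$-adaptedness. Kuratowski--Ryll-Nardzewski, applied to the multifunction $\o\mapsto\{\text{subsequential limits of }Y^{(\ve_n)}(\o)\}$ with values in a path space such as $C([0,T];H^{-1})$, produces a map that is $\bar{\mcF}$-measurable in $\o$, not one whose time-$t$ marginal is $\bar{\mcF}_t$-measurable. The choice of convergent subsequence may depend on the entire path $\b_\cdot(\o)$, so even though each $Y^{(\ve_n)}_t$ is $\bar{\mcF}_t$-measurable, the selected limit at time $t$ need not be. Selecting time-by-time instead would restore adaptedness but destroy the property of being a solution, since different times would correspond to different subsequences. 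This is precisely the difficulty the paper highlights before the proof, and it is not resolved by a standard selection theorem.

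The paper's way around this is different and worth noting: rather than selecting on the $Y$-side, it proves \emph{strong} Cauchy convergence on the $X$-side. Lemma~\ref{lem:vanishing_visc_convergence-1-1} shows, via It\^o's formula in $H^{-1}$ and the Yosida identity $\phi^{(\ve)}(r)\in\phi(J_\ve r)$, that
\[
\E\sup_{t\in[0,T]}e^{-Kt}\|X_t^{(\ve_1)}-X_t^{(\ve_2)}\|_{H^{-1}}^2 \le C(\ve_1+\ve_2),
\]
so $X^{(\ve)}\to X$ in $L^2(\O;C([0,T];H^{-1}))$. One then picks a deterministic sequence $\ve_n\to0$ along which $X^{(\ve_n)}\to X$ $\P$-a.s.\ in $L^2([0,T];H^{-1})$; for a.e.\ $\o$ this forces $Y^{(\ve_n)}(\o)=e^{\mu(\o)}X^{(\ve_n)}(\o)$ to converge along the \emph{full} sequence, with limit $e^{\mu}X$. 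Constructing $Y$ along this specific $\ve_n$ therefore yields $Y=e^{\mu}X$ $\P$-a.s., and adaptedness of $Y$ is inherited from that of $X$. No selection is needed. The same lemma also supplies the $L^2(\O)$-type convergence of $X^{(\ve)}$ that you invoke, without justification, when passing to the limit in the stochastic integral; your claimed ``$L^2([0,T]\times\O;L^2(\mcO))$ convergence of $X^{(\ve)}$'' does not follow from the $\o$-wise convergences of Theorem~\ref{thm:exist_transf} alone.
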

The proof of Theorem \ref{thm:transformation} proceeds in several steps. The main difficulty is the proof of $\bar{\mcF}_{t}$-adaptedness of $Y$, which does not simply follow from the approximation via $Y^{(\ve_{n})}$ due to the lack of a uniqueness result for \eqref{eq:TPME-intro-2}. Note that the subsequence $\ve^{n_{m}}$ along which $Y^{(\ve_{n_{m}})}$ converges to $Y$ may depend on $\o\in\O$ and thus adaptedness of $Y$ does not (yet) follow from the adaptedness of $Y^{(\ve_{n_{m}})}$. The main idea in this section is to prove convergence (not only along some subsequence) of $Y^{(\ve_{n})}$ by proving convergence on the level of the ``back-transformation'' $X^{(\ve_{n})}:=e^{-\mu}Y^{(\ve_{n})}$. 
\begin{prop}
\label{prop:transformation}Let $x_{0}\in L^{\infty}(\mcO)$ and for all $\o\in\O$ let $(Y(\o),\eta(\o))$ be a solution to
\begin{align*}
\partial_{t}Y_{t} & \in e^{\mu_{t}(\o)}\D\phi(Y_{t})-\td\mu Y_{t},\quad\text{on }\mcO_{T}\\
0 & \in\phi(Y_{t}),\quad\text{on }\partial\mcO,
\end{align*}
with $Y_{0}=x_{0}$ in the sense of Definition \ref{def:soln}. Assume that $Y\in L^{2+\tau}([0,T]\times\O\times\mcO)$ for some $\tau>0$ and that $t\mapsto Y_{t}$ is $\bar{\mcF}_{t}$-adapted in $H^{-1}$. Then $(X:=e^{-\mu}Y,\eta)$ is a solution to \eqref{eq:general-SFDE}.\end{prop}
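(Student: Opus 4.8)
The plan is to verify, for $X:=e^{-\mu}Y$, the weak characterisation of a solution recorded in Remark~\ref{rmk:weak_form_of_dual_form}, namely that
\[
(X_{t},\varrho)_{2}=(x_{0},\varrho)_{2}+\int_{0}^{t}(\eta_{r},\D\varrho)_{2}\,dr+\sum_{k=1}^{N}\int_{0}^{t}(f_{k}X_{r},\varrho)_{2}\,d\b_{r}^{k}
\]
for a.e.\ $t\in[0,T]$, $\P$-a.s., and all $\varrho\in H^{2}(\mcO)\cap H_{0}^{1}(\mcO)$. The starting observation is that $e^{-\mu_{t}}=\exp(\sum_{k}f_{k}\b_{t}^{k})$ is an It\^o process with
\[
d\,e^{-\mu_{t}}=e^{-\mu_{t}}\sum_{k=1}^{N}f_{k}\,d\b_{t}^{k}+\td\mu\,e^{-\mu_{t}}\,dt ,
\]
the drift being the It\^o correction coming from $\langle\mu\rangle_{t}=2\td\mu t$. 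Fixing $\varrho$ and applying the It\^o product rule to the pairing of the absolutely continuous $H^{-1}$-valued path $t\mapsto Y_{t}$, for which $\frac{d}{dt}Y_{t}=e^{\mu_{t}}\D\eta_{t}-\td\mu Y_{t}$ in $H^{-1}$, against the $H_{0}^{1}(\mcO)$-valued It\^o process $t\mapsto e^{-\mu_{t}}\varrho$ --- there being no joint quadratic-variation term since $t\mapsto Y_{t}$ has bounded variation in $H^{-1}$ --- one obtains
\[
d(X_{t},\varrho)_{2}=\big\langle e^{\mu_{t}}\D\eta_{t},e^{-\mu_{t}}\varrho\big\rangle\,dt-(\td\mu\,e^{-\mu_{t}}Y_{t},\varrho)_{2}\,dt+\sum_{k=1}^{N}(f_{k}X_{t},\varrho)_{2}\,d\b_{t}^{k}+(\td\mu\,e^{-\mu_{t}}Y_{t},\varrho)_{2}\,dt .
\]
The drift $\td\mu\,e^{-\mu_{t}}\,dt$ from $d\,e^{-\mu_{t}}$ produces exactly the last term, which cancels the $-\td\mu Y_{t}$ drift carried by the equation; and since $\eta_{t},\varrho\in H_{0}^{1}(\mcO)$ and $\varrho\in H^{2}(\mcO)$, integration by parts (the boundary term vanishing because $\eta_{t}|_{\partial\mcO}=0$) gives $\big\langle e^{\mu_{t}}\D\eta_{t},e^{-\mu_{t}}\varrho\big\rangle=-\int_{\mcO}\nabla\eta_{t}\cdot\nabla\varrho\,d\xi=(\eta_{t},\D\varrho)_{2}$. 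What remains is precisely the displayed weak form, which is the rigorous counterpart of the informal equivalence between \eqref{eq:TPME-intro-2} and \eqref{eq:BTW_SOC-1}.

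To make the product rule legitimate when $Y$ is only a weak solution, $Y\in W^{1,2}([0,T];H^{-1})$, I would regularise $Y$ in time: for a time-mollification $Y^{h}\in C^{1}([0,T];H^{-1})$ the pairing $t\mapsto(Y^{h}_{t},e^{-\mu_{t}}\varrho)_{2}$ is a genuine real-valued It\^o process and the one-dimensional It\^o product formula applies verbatim, giving the above identity with $X$ replaced by $X^{h}:=e^{-\mu}Y^{h}$. Passing to the limit $h\to0$ one uses $\frac{d}{dt}Y^{h}\to e^{\mu}\D\eta-\td\mu Y$ in $L^{2}([0,T];H^{-1})$ together with the continuity of $r\mapsto e^{-\mu_{r}}\varrho$ into $H_{0}^{1}(\mcO)$ for the drift terms (the two $\td\mu$-terms cancelling also in the limit), $Y^{h}_{0}\to x_{0}$ for the initial term, and $X^{h}\to X$ in $L^{2}([0,T]\times\O;L^{2}(\mcO))$ for the stochastic term; this last convergence follows by dominated convergence from $\sup_{r\le T}\|e^{-\mu_{r}}\|_{\infty}\in L^{q}(\O)$ for every $q<\infty$ and the hypothesis $Y\in L^{2+\tau}([0,T]\times\O\times\mcO)$. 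Consequently $(f_{k}X^{h}_{r},\varrho)_{2}\to(f_{k}X_{r},\varrho)_{2}$ in $L^{2}([0,T]\times\O)$, the stochastic integrals converge uniformly in probability on $[0,T]$, and along a subsequence the identity holds $\P$-a.s.\ for all $t$.

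It then remains to record the bookkeeping required by Definition~\ref{def:stoch_soln}: $X_{t}=e^{-\mu_{t}}Y_{t}$ is $\bar{\mcF}_{t}$-adapted, being a product of two $\bar{\mcF}_{t}$-adapted factors; $t\mapsto X_{t}$ is continuous in $H^{-1}$, since $Y\in C([0,T];H^{-1})$, $t\mapsto e^{-\mu_{t}}$ is continuous into $C^{2}(\overline{\mcO})$ and multiplication $C^{2}(\overline{\mcO})\times H^{-1}\to H^{-1}$ is continuous; $X\in L^{2}([0,T]\times\O;S)$ by the same moment bound together with $Y\in L^{2+\tau}$; $\eta\in L^{2}([0,T]\times\O;S^{*})$ by the linear growth \eqref{eq:growth_cdt} of $\phi$ through \eqref{eq:A-growth} (taking $\eta$ progressively measurable, which is automatic when $\phi$ is single-valued); and $\eta\in A(X)$ because by hypothesis $\eta_{t}(\xi)\in\phi(Y_{t}(\xi))=\phi(e^{\mu_{t}(\xi)}X_{t}(\xi))$ for a.e.\ $(t,\xi)$. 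Hence $(X,\eta)$ solves \eqref{eq:general-SFDE} in the sense of Definition~\ref{def:stoch_soln}. The one genuine obstacle is the rigorous justification of the It\^o product rule for a pairing of the merely $W^{1,2}([0,T];H^{-1})$-regular path $Y$ with $e^{-\mu}\varrho$; the time-mollification reduces it to the scalar It\^o formula, the price being the $L^{2}$-convergences above, where the hypotheses $Y\in L^{2+\tau}$ and $\bar{\mcF}_{t}$-adaptedness of $Y$ are exactly what is needed.
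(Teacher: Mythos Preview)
Your argument is structurally the same as the paper's: verify the weak formulation of Remark~\ref{rmk:weak_form_of_dual_form} by applying an It\^o product rule to the pairing of $Y$ with $e^{-\mu}\varrho$, observe that the two $\td\mu$-terms cancel, and check the bookkeeping of Definition~\ref{def:stoch_soln}. The bookkeeping (adaptedness, $H^{-1}$-continuity, $L^{2}$-integrability of $X$ and $\eta$ via H\"older and Fernique, and $\eta\in A(X)$) matches the paper's almost verbatim.

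The difference lies in how the product rule is justified. The paper expands in an $L^{2}$-orthonormal eigenbasis $(e_{j})$ of $-\D$: for each $j$, $t\mapsto\,_{H^{-1}}\langle Y_{t},e_{j}\rangle_{H_{0}^{1}}$ is a real-valued adapted process of finite variation (since $Y\in W^{1,2}([0,T];H^{-1})$ is already absolutely continuous, hence a semimartingale with zero martingale part), while $t\mapsto(e_{j},e^{-\mu_{t}}\varrho)_{2}$ is a scalar It\^o process by stochastic Fubini. The one-dimensional It\^o product rule then applies directly to each mode, and summing over $j$ recovers the pairing. No mollification is needed.

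Your time-mollification step is therefore superfluous: the obstacle is not that $Y$ fails to be $C^{1}$ in time (absolute continuity suffices for a finite-variation semimartingale), but that $(Y_{t},e^{-\mu_{t}}\varrho)_{2}$ is an infinite-dimensional pairing rather than a scalar product, so ``the one-dimensional It\^o product formula applies verbatim'' is not quite accurate even after mollifying. You would still need either an infinite-dimensional It\^o product rule for the duality pairing of two Hilbert-valued semimartingales, or a reduction to scalars as the paper does. A secondary point: a symmetric time-mollifier destroys adaptedness of $Y^{h}$, so one would have to use a one-sided kernel, which you do not mention. None of this is fatal --- your route can be completed --- but the paper's eigenbasis decomposition is both more direct and entirely self-contained.
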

\begin{proof}
Since $Y\in L^{2+\tau}([0,T]\times\O\times\mcO)$ for some $\tau>0$, using Hölder's inequality and Fernique's Theorem we obtain $X=e^{-\mu}Y\in L^{2}([0,T]\times\O;S)$. Since $\eta(\o)\in\phi(Y(\o))$, $dt\otimes d\xi$-a.e. for all $\o\in\O$ and due to \eqref{eq:growth_cdt} we have $|\eta(\o)|\lesssim1+|Y(\o)|$ for all $\o\in\O$ and thus $\eta\in L^{2}([0,T]\times\O;S^{*})$. From the assumptions it immediately follows that $X_{t}$ is a continuous, $\bar{\mcF}_{t}$-adapted process in $H^{-1}$.

Let $e_{j}\in H_{0}^{1}(\mcO)\cap H^{2}(\mcO)$ be an orthonormal basis of eigenvectors of $-\D$ on $L^{2}(\mcO)$. For all $\xi\in\mcO$ the process $t\mapsto e^{-\mu_{t}(\xi)}$ is a continuous semimartingale satisfying
\begin{align*}
e{}^{-\mu_{t}(\xi)} & =e^{-\mu_{0}(\xi)}+\sum_{k=1}^{N}\int_{0}^{t}f_{k}(\xi)e^{-\mu_{r}(\xi)}d\b_{r}^{k}+\frac{1}{2}\sum_{k=1}^{N}\int_{0}^{t}f_{k}^{2}(\xi)e^{-\mu_{r}(\xi)}dr\\
 & =1+\sum_{k=1}^{N}\int_{0}^{t}f_{k}(\xi)e^{-\mu_{r}(\xi)}d\b_{r}^{k}+\td\mu(\xi)\int_{0}^{t}e^{-\mu_{r}(\xi)}dr.
\end{align*}
By the stochastic Fubini Theorem we have
\[
(e_{j},e{}^{-\mu_{t}}\varrho)_{2}=(e_{j},\varrho)_{2}-\sum_{k=1}^{N}\int_{0}^{t}(e_{j},f_{k}e^{-\mu_{r}}\varrho)_{2}d\b_{r}^{k}+\int_{0}^{t}(e_{j},\varrho\td\mu e^{-\mu_{r}})_{2}dr,
\]
for all $\varrho\in C_{0}^{2}(\mcO)$, $j\in\N$. Since $Y\in W^{1,2}([0,T];H^{-1})$ and $t\mapsto Y_{t}$ is $\bar{\mcF}_{t}$-adapted, $Y$ is an $H^{-1}$-valued semimartingale and hence $t\mapsto\ _{H^{-1}}\<Y_{t},e_{j}\>_{H_{0}^{1}(\mcO)}$ is a semimartingale satisfying
\[
\ _{H^{-1}}\<Y_{t},e_{j}\>_{H_{0}^{1}(\mcO)}=(e_{j},Y_{0})_{2}+\int_{0}^{t}(\D(e_{j}e^{\mu_{r}}),\eta_{r})_{2}dr-\int_{0}^{t}(e_{j},\td\mu Y_{r})_{2}dr,
\]
where $\eta\in\phi(Y)$, $dt\otimes d\xi\otimes\P$-almost everywhere. Now we apply Itô's product rule to obtain
\begin{align*}
 & (e_{j},e{}^{-\mu_{t}}\varrho)_{2}\ _{H^{-1}}\<Y_{t},e_{j}\>_{H_{0}^{1}(\mcO)}\\
 & =(e_{j},\varrho)_{2}(e_{j},y_{0})_{2}+\int_{0}^{t}(e_{j},e{}^{-\mu_{r}}\varrho)_{2}d\ _{H^{-1}}\<Y_{t},e_{j}\>_{H_{0}^{1}(\mcO)}+\int_{0}^{t}(e_{j},Y_{r})_{2}d(e_{j},e{}^{-\mu_{r}}\varrho)_{2}\\
 & =(e_{j},\varrho)_{2}(e_{j},y_{0})_{2}+\int_{0}^{t}(e_{j},e{}^{-\mu_{r}}\varrho)_{2}(\D(e_{j}e^{\mu_{r}}),\eta_{r})_{2}dr-\int_{0}^{t}(e_{j},e{}^{-\mu_{r}}\varrho)_{2}(e_{j},\td\mu Y_{r})_{2}dr\\
 & +\sum_{k=1}^{N}\int_{0}^{t}(e_{j},Y_{r})_{2}(e_{j},f_{k}e^{-\mu_{r}}\varrho)_{2}d\b_{r}^{k}+\int_{0}^{t}(e_{j},Y_{r})_{2}(e_{j},\varrho\td\mu e^{-\mu_{r}})_{2}dr.
\end{align*}
Since
\[
(\D(e_{j}e^{\mu_{r}}),\eta_{r})_{2}=\ _{H^{-1}}\<e^{\mu_{r}}\D\eta_{r},e_{j}\>_{H_{0}^{1}}
\]
summing over $j$ yields (the third and last term on the right hand side cancel)
\begin{align*}
\ _{H^{-1}}\<Y_{t},e{}^{-\mu_{t}}\varrho\>_{H_{0}^{1}(\mcO)}=(\varrho,y_{0})_{2} & +\int_{0}^{t}\ _{H^{-1}}\<e^{\mu_{r}}\D\eta_{r},e{}^{-\mu_{r}}\varrho\>_{H_{0}^{1}}dr\\
 & +\sum_{k=1}^{N}\int_{0}^{t}(Y_{r},f_{k}e^{-\mu_{r}}\varrho)_{2}d\b_{r}^{k}
\end{align*}
and thus
\[
\ _{H^{-1}}\<X_{t},\varrho\>_{H_{0}^{1}(\mcO)}=(\varrho,x_{0})_{2}+\int_{0}^{t}(\D\varrho,\eta_{r})_{2}dr+\sum_{k=1}^{N}\int_{0}^{t}(X_{r},f_{k}\varrho)_{2}d\b_{r}^{k},
\]
for all $\varrho\in C_{0}^{2}(\mcO)$ and all $t\in[0,T]$. By Remark \ref{rmk:weak_form_of_dual_form} this implies the claim.
\end{proof}
In order to apply Proposition \ref{prop:transformation} we need to prove that $Y$ constructed in Theorem \ref{thm:exist_transf} may be chosen to be $\bar{\mcF}_{t}$-adapted in $H^{-1}.$ As outlined above, for this we will prove convergence on the level of the ``back-transformations'' $X^{(\ve)}:=e^{-\mu}Y^{(\ve)}$.
\begin{lem}
\label{lem:vanishing_visc_convergence-1-1}Let $x\in L^{2}(\mcO)$. For all $\ve>0$ let $(X^{(\ve)},\eta^{(\ve)})$ be a solution to \eqref{eq:general-SFDE} with $\phi\equiv\phi^{(\ve)}$ and $\phi^{(\ve)}$ as in Section \ref{sec:construction}. Assume $\sup_{\ve\ge0}\|X^{(\ve)}\|_{L^{2}([0,T]\times\O;S)}\le C$. Then
\[
X^{(\ve)}\to X\quad\text{for }\ve\to0\text{ in }L^{2}(\O;C([0,T];H^{-1})),
\]
where $X$ is a solution to \eqref{eq:general-SFDE} with $\phi=\partial\sgn$. \end{lem}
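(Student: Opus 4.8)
The plan is to obtain the convergence from an $H^{-1}$-energy estimate for the difference of two approximating solutions, and then to identify the limit as a solution by the Minty monotonicity argument, in which the nonlinear term is controlled through an $H^{-1}$-energy balance rather than through spatial compactness. First I would fix $\ve,\ve'>0$, put $Z:=X^{(\ve)}-X^{(\ve')}$ and $\eta^{(\ve)}_r:=\phi^{(\ve)}(e^{\mu_r}X^{(\ve)}_r)$ (single-valued), and note that by Definition \ref{def:stoch_soln} the process $Z$ solves, in $S^*$, $\iota Z_t=-\int_0^t(\eta^{(\ve)}_r-\eta^{(\ve')}_r)\,dr+\iota\sum_{k=1}^N\int_0^t f_kZ_r\,d\b^k_r$, with $\iota=(-\D)^{-1}$ and $Z_0=0$. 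Applying It\^o's formula for the Gelfand triple $S\subseteq H^{-1}\subseteq S^*$ (\cite[Theorem 4.2.5]{PR07}), with a localising sequence of stopping times if needed, gives $\|Z_t\|_{H^{-1}}^2=-2\int_0^t{}_{S^*}\<\eta^{(\ve)}_r-\eta^{(\ve')}_r,Z_r\>_S\,dr+2\sum_k\int_0^t(f_kZ_r,Z_r)_{H^{-1}}\,d\b^k_r+\sum_k\int_0^t\|f_kZ_r\|_{H^{-1}}^2\,dr$. The crucial term is the drift: writing $Z_r=e^{-\mu_r}(a-b)$ with $a:=e^{\mu_r}X^{(\ve)}_r$ and $b:=e^{\mu_r}X^{(\ve')}_r$ one has ${}_{S^*}\<\eta^{(\ve)}_r-\eta^{(\ve')}_r,Z_r\>_S=\int_\mcO e^{-\mu_r}(\phi^{(\ve)}(a)-\phi^{(\ve')}(b))(a-b)\,d\xi$, and I would decompose $a-b=(a-J_\ve a)+(J_\ve a-J_{\ve'}b)+(J_{\ve'}b-b)$: the middle cross-term is $\ge0$ since $\phi^{(\ve)}(a)\in\sgn(J_\ve a)$, $\phi^{(\ve')}(b)\in\sgn(J_{\ve'}b)$ and $\sgn$ is monotone, while the two remaining cross-terms are bounded below by $-2\ve$ and $-2\ve'$ using $|a-J_\ve a|=\ve|\phi^{(\ve)}(a)|\le\ve$ and $|\phi^{(\ve)}|,|\phi^{(\ve')}|\le1$ (cf. \eqref{eq:Yosida}, Section \ref{sec:construction}); hence ${}_{S^*}\<\eta^{(\ve)}_r-\eta^{(\ve')}_r,Z_r\>_S\ge-2(\ve+\ve')\int_\mcO e^{-\mu_r}\,d\xi$.

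Since multiplication by $f_k\in C^2(\mcO)$ is bounded on $H_0^1(\mcO)$ (Poincar\'e inequality), hence on $H^{-1}$, one has $\sum_k\|f_kZ_r\|_{H^{-1}}^2\le C_f\|Z_r\|_{H^{-1}}^2$; inserting this and the drift bound, taking expectations (the stochastic integral being a genuine martingale along the localisation), using $\mathbb{E}\int_0^T\int_\mcO e^{-\mu_r}\,d\xi\,dr=\int_\mcO\int_0^T e^{\frac12|f(\xi)|^2 r}\,dr\,d\xi<\infty$ and Gronwall's lemma gives $\sup_{t\le T}\mathbb{E}\|Z_t\|_{H^{-1}}^2\le C_T(\ve+\ve')$; taking instead the supremum in $t$ inside the expectation, applying the Burkholder-Davis-Gundy inequality to the martingale term, absorbing via Young's inequality and using Gronwall once more, yields $\mathbb{E}\sup_{t\le T}\|X^{(\ve)}_t-X^{(\ve')}_t\|_{H^{-1}}^2\le C'_T(\ve+\ve')\to0$ as $\ve,\ve'\to0$. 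Therefore $(X^{(\ve)})_{\ve>0}$ is Cauchy in $L^2(\O;C([0,T];H^{-1}))$; let $X$ be its limit, which is continuous, $\bar\mcF_t$-adapted, and lies in $L^2([0,T]\times\O;S)$ by the hypothesis $\sup_\ve\|X^{(\ve)}\|_{L^2([0,T]\times\O;S)}\le C$ together with weak lower semicontinuity of the norm. To see that $X$ solves \eqref{eq:general-SFDE}, I would pass to the limit in the weak formulation of Remark \ref{rmk:weak_form_of_dual_form}: since $|\eta^{(\ve)}|\le1$, along a subsequence $\eta^{(\ve)}\rightharpoonup^*\eta$ in $L^\infty([0,T]\times\O\times\mcO)$ with $|\eta|\le1$; moreover $X^{(\ve)}_t\to X_t$ in $L^2(\O;H^{-1})$ for every $t$ and $f_kX^{(\ve)}\to f_kX$ in $L^2([0,T]\times\O;H^{-1})$, so the stochastic integrals converge, and this shows that $X$ satisfies the integral equation with this $\eta$, not yet known to be a selection of $\sgn(e^\mu X)$.

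For the identification of $\eta$ I would first apply the above It\^o formula to $\|X^{(\ve)}_t\|_{H^{-1}}^2$ and to $\|X_t\|_{H^{-1}}^2$ and, using the strong convergences established above, deduce the energy convergence $\mathbb{E}\int_0^T\int_\mcO\eta^{(\ve)}_r X^{(\ve)}_r\,d\xi\,dr\to\mathbb{E}\int_0^T\int_\mcO\eta_r X_r\,d\xi\,dr$. On $L^2([0,T]\times\O\times\mcO)$ I would then introduce the proper convex lower semicontinuous functionals $\Psi^{(\ve)}(u):=\mathbb{E}\int_0^T\int_\mcO e^{-\mu_r}\psi^{(\ve)}(e^{\mu_r}u_r)\,d\xi\,dr$ and $\Psi(u):=\mathbb{E}\int_0^T\int_\mcO|u_r|\,d\xi\,dr$ (note $e^{-\mu}\psi(e^\mu u)=|u|$ because $\psi=|\cdot|$), whose subdifferentials are $\partial\Psi^{(\ve)}(u)=\{\phi^{(\ve)}(e^\mu u)\}$ and $\partial\Psi(u)=\{v:v_r(\xi)\in\sgn(e^{\mu_r(\xi)}u_r(\xi))\text{ a.e.}\}$, the latter coinciding with $\{v:v_r(\xi)\in\sgn(u_r(\xi))\}$ since $e^{\mu_r}>0$. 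By \eqref{eq:phi-approx-error}, $|\Psi^{(\ve)}(u)-\Psi(u)|\le2\ve\,\mathbb{E}\int_0^T\int_\mcO e^{-\mu_r}\,d\xi\,dr\to0$ uniformly in $u$, so $\Psi^{(\ve)}\to\Psi$ in the Mosco sense and $\partial\Psi^{(\ve)}\to\partial\Psi$ in the strong graph sense (\cite[Theorem 3.66]{A84}). Given any $(\tilde z,\tilde\eta)\in\partial\Psi$, pick approximants $(\tilde z^{(\ve)},\tilde\eta^{(\ve)}=\phi^{(\ve)}(e^\mu\tilde z^{(\ve)}))\to(\tilde z,\tilde\eta)$ strongly in $L^2([0,T]\times\O\times\mcO)$; monotonicity of $\phi^{(\ve)}$ gives $\mathbb{E}\int_0^T\int_\mcO(\eta^{(\ve)}_r-\tilde\eta^{(\ve)}_r)(X^{(\ve)}_r-\tilde z^{(\ve)}_r)\,d\xi\,dr\ge0$, and passing to the limit --- the product $\eta^{(\ve)}X^{(\ve)}$ by the energy convergence, the two mixed products by weak-strong convergence (here using $X^{(\ve)}\rightharpoonup X$ weakly in $L^2([0,T]\times\O\times\mcO)$, which follows from the uniform bound), the last product by strong-strong convergence --- yields $\mathbb{E}\int_0^T\int_\mcO(\eta_r-\tilde\eta_r)(X_r-\tilde z_r)\,d\xi\,dr\ge0$ for all $(\tilde z,\tilde\eta)\in\partial\Psi$. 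Maximal monotonicity of $\partial\Psi$ then forces $\eta\in\partial\Psi(X)$, i.e. $\eta_r(\xi)\in\sgn(e^{\mu_r(\xi)}X_r(\xi))$ for a.e. $(r,\xi)$ and $\P$-a.s., so $(X,\eta)$ is a solution of \eqref{eq:general-SFDE} with $\phi=\partial\sgn$, as claimed.

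I expect the last step to be the main obstacle. The reason is structural: because no uniform $L^2([0,T];H_0^1(\mcO))$ bound on the approximants is available as $\ve\to0$ --- precisely the source of the difficulties peculiar to the $\sgn$-nonlinearity, where the limiting solution need not have any spatial Sobolev regularity --- strong $L^2$-compactness of $X^{(\ve)}$ in the space variable is out of reach, so one cannot pass to the limit in the product $\eta^{(\ve)}X^{(\ve)}$ by an Aubin-Lions argument; instead this passage must be extracted from the $H^{-1}$-energy balance for both $X^{(\ve)}$ \emph{and} the still unidentified limit $X$, which is why the It\^o formula has to be invoked twice and the convex functionals $\Psi^{(\ve)},\Psi$ set up with the random weights $e^{\pm\mu}$. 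The only genuine integrability input, the finiteness of $\mathbb{E}\int_0^T\int_\mcO e^{-\mu_r}\,d\xi\,dr$, is handled by the elementary Gaussian moment identity above, and everything else in the identification is soft.
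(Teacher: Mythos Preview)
Your proof is correct and follows essentially the same route as the paper: It\^o's formula for the difference in $H^{-1}$, the resolvent decomposition \eqref{eq:Yosida} to bound the drift below by $-2(\ve+\ve')$ (pointwise in $a,b$), BDG plus Gronwall for the Cauchy property, and then Mosco convergence of the associated convex functionals together with maximal monotonicity to identify the limiting selection $\eta$. Two points where you are actually more careful than the paper: you keep track of the factor $e^{-\mu_r}$ that appears when rewriting $X^{(\ve)}-X^{(\ve')}=e^{-\mu}(a-b)$ (the paper silently absorbs it), and you spell out the energy-convergence step $\E\int\eta^{(\ve)}X^{(\ve)}\to\E\int\eta X$ needed to pass to the limit in the weak$\times$weak product, which the paper leaves implicit in the phrase ``Taking the limit $\ve\to0$''; your derivation of this via the It\^o identity for $\|X^{(\ve)}_T\|_{H^{-1}}^2$ and $\|X_T\|_{H^{-1}}^2$ is the standard and correct way to close that gap.
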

\begin{proof}
For $\ve_{1},\ve_{2}>0$ we consider two solutions $(X^{(\ve_{i})},\eta^{(\ve_{i})})$, $i=1,2$ to \eqref{eq:general-SFDE}. For notational simplicity let $B(v)(u):=\sum_{k=1}^{N}f_{k}vu^{k}$ for $v\in H^{-1},u\in\R^{N}$ in the following. We further let $L_{2}=L_{2}(\R^{N};H^{-1})$ be the space of Hilbert-Schmidt operators from $\R^{N}$ to $H^{-1}$. By Itô's formula we observe
\begin{align*}
 & \|X_{t}^{(\ve_{1})}-X_{t}^{(\ve_{2})}\|_{H^{-1}}^{2}\\
 & =2\int_{0}^{t}\ _{S^{*}}\<\eta^{(\ve_{1})}-\eta^{(\ve_{2})},X_{r}^{(\ve_{1})}-X_{r}^{(\ve_{2})}\>_{S}dr+\int_{0}^{t}\|B(X_{r}^{(\ve_{1})})-B(X_{r}^{(\ve_{2})})\|_{L_{2}}^{2}dr\\
 & +2\int_{0}^{t}(B(X_{r}^{(\ve_{1})})-B(X_{r}^{(\ve_{2})}),X_{r}^{(\ve_{1})}-X_{r}^{(\ve_{2})})_{H^{-1}}dW_{r}\\
 & =-2\int_{0}^{t}\int_{\mcO}(\phi^{(\ve_{1})}(e^{\mu_{r}}X_{r}^{(\ve_{1})})-\phi^{(\ve_{2})}(e^{\mu_{r}}X_{r}^{(\ve_{2})}))(X_{r}^{(\ve_{1})}-X_{r}^{(\ve_{2})})d\xi dr\\
 & +\int_{0}^{t}\|B(X_{r}^{(\ve_{1})})-B(X_{r}^{(\ve_{2})})\|_{L_{2}}^{2}dr\\
 & +2\int_{0}^{t}(B(X_{r}^{(\ve_{1})})-B(X_{r}^{(\ve_{2})}),X_{r}^{(\ve_{1})}-X_{r}^{(\ve_{2})})_{H^{-1}}dW_{r}.
\end{align*}
Due to \eqref{eq:Yosida} we note
\begin{align*}
(\phi^{(\ve_{1})}(a)-\phi^{(\ve_{2})}(b))\cdot(a-b) & =(\phi^{(\ve_{1})}(a)-\phi^{(\ve_{2})}(b))\cdot(J_{\ve}a-J_{\ve}b)\\
 & +(\phi^{(\ve_{1})}(a)-\phi^{(\ve_{2})}(b))\cdot(a-J_{\ve}a-(b-J_{\ve}b))\\
 & \ge(\phi^{(\ve_{1})}(a)-\phi^{(\ve_{2})}(b))\cdot(\ve_{1}\phi^{(\ve_{1})}(a)-\ve_{2}\phi^{(\ve_{2})}(b))\\
 & \ge-2(\ve_{1}+\ve_{2})\quad\forall a,b\in\R.
\end{align*}
Using the Burkholder-Davis-Gundy inequality we obtain
\begin{align*}
\E\sup_{t\in[0,T]}e^{-Kt}\|X_{t}^{(\ve_{1})}-X_{t}^{(\ve_{2})}\|_{H^{-1}}^{2} & \le C(\ve_{1}+\ve_{2})T,
\end{align*}
for $K>0$ sufficiently large. We conclude
\[
X^{(\ve)}\to X\quad\text{in }L^{2}(\O;C([0,T];H^{-1})).
\]
It remains to identify $X$ as a solution to \eqref{eq:general-SFDE}. Boundedness of $X^{(\ve)}$ in $L^{2}([0,T]\times\O;S)$ and \eqref{eq:A-growth} imply boundedness of $\eta^{(\ve)}$ in $L^{2}([0,T]\times\O;S^{*})$. Hence, we may extract subsequences such that
\begin{align*}
X^{(\ve)} & \rightharpoonup X\quad\text{in }L^{2}([0,T]\times\O;S)\\
\eta^{(\ve)}=\phi^{(\ve)}(e^{\mu}X^{(\ve)}) & \rightharpoonup\eta\quad\text{in }L^{2}([0,T]\times\O;S^{*}).
\end{align*}
Since 
\[
X_{t}^{(\ve)}=x_{0}+\int_{0}^{t}\eta_{r}^{(\ve)}dr+\sum_{k=1}^{N}\int_{0}^{t}f_{k}X_{r}^{(\ve)}d\b_{r}^{k},
\]
as an equation in $S^{*}$, taking the limit $\ve\to0$ we obtain 
\[
X_{t}=x_{0}+\int_{0}^{t}\eta_{r}dr+\sum_{k=1}^{N}\int_{0}^{t}f_{k}X_{r}d\b_{r}^{k}
\]
and it remains to identify $\eta$. Similar to Section \ref{sec:existence} we now consider the convex, lower semicontinuous functionals
\begin{align*}
\Psi(x) & :=\E\int_{0}^{T}\int_{\mcO}\psi(e^{\mu_{t}}x_{t}(\xi))d\xi dt,\\
\Psi^{(\ve)}(x) & :=\E\int_{0}^{T}\int_{\mcO}\psi^{(\ve)}(e^{\mu_{t}}x_{t}(\xi))d\xi dt,\quad x\in L^{2}([0,T]\times\O\times\mcO).
\end{align*}
Then $\partial\Psi,\partial\Psi^{(\ve)}:L^{2}([0,T]\times\O\times\mcO)\to L^{2}([0,T]\times\O\times\mcO)$ are maximal monotone operators. Since $\psi^{(\ve)}\to\psi$ uniformly, we have $\Psi^{(\ve)}\to\Psi$ in Mosco sense, and thus $\partial\Psi^{(\ve)}\to\partial\Psi$ in strong graph sense (cf. \cite[Theorem 3.66]{A84}), i.e. for all $(\td z,\td\eta)\in\partial\Psi$ there are $(\td z^{(\ve)},\td\eta^{(\ve)}=\phi^{(\ve)}(e^{\mu_{t}}\td z^{(\ve)}))\in\partial\Psi^{(\ve)}$ such that $\td z^{(\ve)}\to\td z$, $\td\eta^{(\ve)}\to\td\eta$ in $L^{2}([0,T]\times\O\times\mcO)$. By monotonicity of $\phi^{(\ve)}$ we have
\begin{align*}
\E\int_{0}^{T}\int_{\mcO}(\phi^{(\ve)}(e^{\mu_{r}}X_{r}^{(\ve)})-\phi^{(\ve)}(e^{\mu_{r}}\td z_{r}^{(\ve)})(X_{r}^{(\ve)}-\td z_{r}^{(\ve)})d\xi dr & \ge0.
\end{align*}
Taking the limit $\ve\to0$ we obtain
\begin{align*}
\E\int_{0}^{T}\int_{\mcO}(\eta_{r}-\td\eta_{r})(X_{r}-\td z_{r})d\xi dr & \ge0,
\end{align*}
for all $(\td z,\td\eta)\in\partial\Psi$. By maximal monotonicity this gives $\eta\in\partial\Psi(X)$ which implies $\eta_{t}(\xi)\in\phi(e^{-\mu_{t}(\xi)}X_{t}(\xi))$ for a.e. $(t,\xi)\in\mcO_{T}$. In conclusion, $(X,\eta)$ is a solution to \eqref{eq:general-SFDE} with $\phi=\partial\psi$.
\end{proof}

\begin{proof}[Proof of Theorem \ref{thm:transformation}:]
 In order to prove $\bar{\mcF}_{t}$-adaptedness of $Y^{(\ve,\d)}$ we note that in the construction we may choose the approximation $\mu^{(\tau)}$ of $\mu$ in an $\bar{\mcF}_{t}$-adapted way (e.g. by first shifting the time variable by $\tau$, then mollifying with a standard Dirac sequence). Continuity of the solution to \eqref{eq:tau_eps_delta-approx} with respect to $\mu^{(\tau)}$ is classical. This implies $\bar{\mcF}_{t}$-adaptedness of $Y^{(\tau,\ve,\d)}$. From Lemma \ref{lem:eps_del_ex} and by uniqueness of $Y^{(\ve,\d)}$ we have weak convergence along the full sequence, i.e.
\begin{align*}
Y^{(\tau,\ve,\d)} & \rightharpoonup Y^{(\ve,\d)},\quad\text{in }L^{2}([0,T];H^{-1}).
\end{align*}
Hence, $Y^{(\ve,\d)}$ is $\bar{\mcF}_{t}$-adapted. Analogous reasoning yields $\bar{\mcF}_{t}$-adaptedness of $Y^{(\ve)}$. 

From \eqref{eq:lp-bound-eps} we have
\begin{align*}
\sup_{t\in[0,T]}\E\int_{\mcO}|Y_{t}^{(\ve)}|^{p}d\xi & \le\int_{\mcO}|y_{0}|^{p}d\xi+C\ve^{p-1}\E\int_{0}^{T}\int_{\mcO}|\D e^{\mu_{r}+p\td\mu r}|d\xi dr
\end{align*}
and the right hand side is finite due to Fernique's Theorem. Hence, $Y^{(\ve)}$ is uniformly bounded in $L^{p}([0,T]\times\O\times\mcO)$ for all $p\ge1$. Proposition \ref{prop:transformation} implies that $X^{(\ve)}:=e^{-\mu}Y^{(\ve)}$ is a solution to \eqref{eq:general-SFDE} with $\phi=\phi^{(\ve)}.$ Again employing Fernique's Theorem we note that $X^{(\ve)}$ is uniformly bounded in $L^{2}([0,T]\times\O;S)$. Since $\psi^{(\ve)}$ is the Moreau-Yosida approximation of $\psi$, Lemma \ref{lem:vanishing_visc_convergence-1-1} implies $X^{(\ve)}\to X$ in $L^{2}([0,T]\times\O;H^{-1}).$ Thus, there is a sequence $\ve_{n}\to0$ such that
\[
X^{(\ve_{n})}\to X\quad\P\text{-a.s. in }L^{2}([0,T];H^{-1}).
\]
Since also $Y^{(\ve_{n_{l}})}\rightharpoonup Y$ in $L^{2}([0,T];H^{-1})$ along some subsequence $n_{l}$, we obtain
\begin{align*}
e^{\mu}X=Y & ,\quad\P\text{-a.s.},
\end{align*}
which implies that $Y$ is $\bar{\mcF}_{t}$-adapted, if constructed along this specific sequence $\ve_{n}$. 

Proposition \ref{prop:transformation} then finishes the proof.
\end{proof}

\section{Finite time extinction\label{sec:FTE}}

In this section we will prove finite time extinction via energy methods as outlined in the introduction. We will first prove an energy inequality for the approximating solutions $Y^{(\ve,\d)}$ constructed in Lemma  \ref{lem:eps_del_ex}. By a limiting argument this will imply finite time extinction for $Y$. 

In order to control the amount of energy added to the system by the random perturbation we need to require

\begin{hypothesis}\label{hyp:noise}

Assume that $f=(f_{k})_{k=1,\dots,N}\in C^{2}(\mcO;\R^{N})$ is such that for all $p\ge1$ and a.a. $\o\in\O$ there is a $t_{0}=t_{0}(p,\o)$ such that 
\[
\int_{\mcO}e^{-p\sum_{k=1}^{N}f_{k}(\xi)\b_{t}^{k}(\o)-\frac{1}{2}f_{k}^{2}(\xi)t}d\xi=\int_{\mcO}e^{-p\mu_{t}(\xi,\o)-p\td\mu(\xi)t}d\xi\le C(p,\o)<\infty,
\]
for all $t\ge t_{0}.$

\end{hypothesis}

Based on the law of iterated logarithm, it is not difficult to see that as long as $\td\mu$ is strictly positive Hypothesis \ref{hyp:noise} is satisfied. Trivially, Hypothesis \ref{hyp:noise} is satisfied when no noise is present (i.e. $\mu,\td\mu\equiv0$). More generally, a mild decay condition on the size of the level sets of $\td\mu$ is sufficient to guarantee Hypothesis \ref{hyp:noise}:
\begin{rem}
\label{rmk:hyp_noise}Assume that for each $p\ge1$ there are $\ve_{0},c>0$ such that
\[
\left|\left\{ \xi\in\mcO\Big|0<\sum_{k=1}^{N}f_{k}^{2}(\xi)\le\ve\right\} \right|=|\{0<\td\mu\le\ve\}|\le c|\log(\ve)|^{-p},\quad\forall\ve\le\ve_{0}.
\]
Then Hypothesis \ref{hyp:noise} is satisfied. In particular, Hypothesis \ref{hyp:noise} is satisfied whenever the mass of the sublevel sets of $\td\mu$ decays polynomially, i.e. if $|\{0<\td\mu\le\ve\}|\lesssim\ve^{\d}$ for all $\ve\le\ve_{0}$ and some $\d>0$.\end{rem}
\begin{proof}
For each $\tau>1$, by the law of iterated logarithm we have
\[
|\b_{t}(\o)|\le\tau\sqrt{2t\log_{2}(t)},\quad\text{for all }t\ge t_{0}(\tau,\o),
\]
for $\P$-a.a. $\o\in\O$. For $p\ge1$, we estimate
\begin{align*}
\int_{\mcO}e^{-p\mu_{t}(\xi)-p\td\mu(\xi)t}d\xi\le & \int_{\mcO}e^{p|f||\b_{t}|-p\td\mu t}d\xi\\
\le & \int_{\mcO}e^{p|f|(|\b_{t}|-\frac{|f|}{2}t)}d\xi\\
= & \int_{\mcO}e^{p|f|\sqrt{t}\left(\tau\sqrt{2\log_{2}(t)}-\frac{|f|}{2}\sqrt{t}\right)}d\xi\\
= & \int_{\{|f|\le\frac{2\tau\sqrt{2\log_{2}(t)}}{\sqrt{t}}\}}e^{p|f|\sqrt{t}\left(\tau\sqrt{2\log_{2}(t)}-\frac{|f|}{2}\sqrt{t}\right)}d\xi\\
 & +\int_{\{|f|>\frac{2\tau\sqrt{2\log_{2}(t)}}{\sqrt{t}}\}}e^{p|f|\sqrt{t}\left(\tau\sqrt{2\log_{2}(t)}-\frac{|f|}{2}\sqrt{t}\right)}d\xi\\
\le & \int_{\{|f|\le\frac{2\tau\sqrt{2\log_{2}(t)}}{\sqrt{t}}\}}e^{p|f|\sqrt{t}\tau\sqrt{2\log_{2}(t)}}d\xi+|\mcO|\\
\le & e^{4p\tau^{2}\log_{2}(t)}\left|\left\{ |f|\le\frac{2\tau\sqrt{2\log_{2}(t)}}{\sqrt{t}}\right\} \right|+|\mcO|,
\end{align*}
for all $t\ge t_{0}(\tau,\o)$. Since $\frac{2\tau\sqrt{2\log_{2}(t)}}{\sqrt{t}}\to0$ for $t\to\infty$ we can use the assumption to conclude 
\begin{align*}
\int_{\mcO}e^{-\mu_{t}(\xi)-\td\mu(\xi)t}d\xi & \lesssim|\log(t)|^{4p\tau^{2}}|\log(t^{-\frac{1}{4}})|^{-(4p+1)}+|\mcO|\\
 & \lesssim|\log(t)|^{4p\tau^{2}-(4p+1)}+|\mcO|,
\end{align*}
for all $t\ge t_{0}(\tau,p,\o)$. Choosing $\tau>0$ small enough implies the claim. 
\end{proof}
We will now proceed to prove the key energy estimate in an approximate form for $Y^{(\ve,\d)}$. In the following let $\vp\in C^{2}(\mcO)$ be the classical solution to 
\begin{align*}
\D\vp & =-1,\quad\text{on }\mcO\\
\vp & =1,\quad\text{on }\partial\mcO.
\end{align*}
By the maximum principle we have $1\le\vp\le\|\vp\|_{\infty}=:C_{\vp}$. 
\begin{lem}
\label{lem:main_estimate_eps-delta}Assume that Hypothesis \ref{hyp:noise} is satisfied. Let $y_{0}\in L^{\infty}(\mcO)$, $\ve,\d>0$ and $Y^{(\ve,\d)}$ be the associated solution to \eqref{eq:eps_delta-approx}. Let $\tau>0$ and $[s,t]\subseteq\R_{+}$ such that 
\[
\sup_{r\in[s,t]}\D\vp e^{\mu_{r}-\mu_{s}}\le0
\]
and $s\ge t_{0}=t_{0}(p,\tau,\o)$, with $t_{0}$ as in Hypothesis \ref{hyp:noise}. Then, for all $p>\frac{d}{2}\vee1$ we have
\begin{align*}
 & \int_{\mcO}\psi^{(\ve)}(Y_{t}^{(\ve,\d)})\vp e^{-\mu_{s}}d\xi\\
 & \le\left(\left(h_{2}(s,1,\tau,\d,\ve,\|x_{0}\|_{1+\tau}^{1+\tau})-K^{\ve}\right)^{(1-\a)}-(1-\a)\int_{s}^{t}g^{(\ve,\d)}(r)dr\vee0\right)^{\frac{1}{1-\a}}+K^{\ve},
\end{align*}
where
\begin{align*}
h_{2}(r,p,\tau,\d,\ve,x):= & C_{1}C_{\vp}^{p}\Big(e^{\d\int_{0}^{r}h_{1}(s,p+\tau)ds}x\\
 & +C\ve^{p+\tau-1}\int_{0}^{r}e^{\d\int_{s}^{t}h_{1}(w,p+\tau)dw}h_{1}(s,p+\tau)ds\Big)^{\frac{p}{p+\tau}}\\
h_{1}(r,p):= & \sup_{\xi\in\mcO}|\D e^{\mu_{r}+p\td\mu r}|\\
K^{(\ve)}:= & C_{\vp}\|e^{-\mu_{s}}\|_{\infty}\frac{\ve}{2}
\end{align*}
and $C_{1}=C_{1}(p,\tau,\o)$, $\a=\frac{2p^{*}}{q}<1$,
\[
g^{(\ve,\d)}(r)=\begin{cases}
\left(\inf_{\xi\in\mcO}e^{\mu_{r}-\mu_{s}}\right)\|\phi^{(\ve)}(Y_{r}^{(\ve,\d)})\|_{\infty} & ,\text{ for }d=1\\
\frac{\inf_{\xi\in\mcO}e^{\mu_{r}-\mu_{s}}}{h_{2}(r,p,\tau,\d,\ve,\|x_{0}\|_{p+\tau}^{p+\tau})^{\frac{2p^{*}}{pq}}} & ,\text{ for }d\ge2,
\end{cases}
\]
and
\begin{align}
q & =\infty\text{ if }d=1,\nonumber \\
q & \in(2,\infty)\text{ arbitrary if }d=2,\label{eq:Sobolev constant-1}\\
q & =\frac{2d}{d-2}\text{ if }d\ge3.\nonumber 
\end{align}
\end{lem}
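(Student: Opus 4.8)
The plan is to run the weighted $L^{1}$-energy inequality \eqref{eq:eps_main_est} of Lemma~\ref{lem:eps_del_ex} with the fixed $C^{2}$ weight $\varrho:=\vp e^{-\mu_{s}}$ (with $s$ kept fixed), to abbreviate $u(r):=\int_{\mcO}\psi^{(\ve)}(Y_{r}^{(\ve,\d)})\vp e^{-\mu_{s}}d\xi$, and to convert the resulting dissipation estimate into the stated comparison ODE. Since $\varrho e^{\mu_{r}}=\vp e^{\mu_{r}-\mu_{s}}$, on every subinterval $[r_{1},r_{2}]\subseteq[s,t]$ the hypothesis $\sup_{r\in[s,t]}\D(\vp e^{\mu_{r}-\mu_{s}})\le0$ makes both perturbative terms of \eqref{eq:eps_main_est} nonpositive (using $(\phi^{(\ve)})^{2}\ge0$, $\psi^{(\ve)}\ge0$), so they may be discarded, giving
\[
u(r_{2})+\int_{r_{1}}^{r_{2}}\!\!\int_{\mcO}\vp e^{\mu_{r}-\mu_{s}}|\nabla\phi^{(\ve)}(Y_{r}^{(\ve,\d)})|^{2}d\xi\,dr\le u(r_{1}).
\]
In particular $u$ is non-increasing on $[s,t]$ and, using $\vp\ge1$, $\dot u(r)\le-\big(\inf_{\xi\in\mcO}e^{\mu_{r}-\mu_{s}}\big)\|\nabla\phi^{(\ve)}(Y_{r}^{(\ve,\d)})\|_{2}^{2}$ for a.e.\ $r\in[s,t]$. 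The initial value is bounded by combining $\psi^{(\ve)}(\cdot)\le|\cdot|$, the splitting $e^{-\mu_{s}}=e^{-\mu_{s}-\td\mu s}e^{\td\mu s}$ with $|Y_{s}^{(\ve,\d)}|e^{\td\mu s}=(e^{(1+\tau)\td\mu s}|Y_{s}^{(\ve,\d)}|^{1+\tau})^{1/(1+\tau)}$, Hölder with exponents $1+\tau$ and $\tfrac{1+\tau}{\tau}$, Hypothesis~\ref{hyp:noise} (exponent $\tfrac{1+\tau}{\tau}$, valid for $s\ge t_{0}$) for the weight factor, and Gronwall applied to \eqref{eq:eps_lp_bound} at exponent $1+\tau$ for the remaining factor; this yields $u(s)\le h_{2}(s,1,\tau,\d,\ve,\|x_{0}\|_{1+\tau}^{1+\tau})$.

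The heart of the argument is to bound $\|\nabla\phi^{(\ve)}(Y_{r}^{(\ve,\d)})\|_{2}^{2}$ below by a power of $u(r)$. Writing $w:=\phi^{(\ve)}(Y_{r}^{(\ve,\d)})$ and using the elementary pointwise bound $\psi^{(\ve)}(y)\le\tfrac{\ve}{2}+2^{\lambda}|\phi^{(\ve)}(y)|^{\lambda}|y|$, valid for every $\lambda\ge0$ (checked by distinguishing $|y|\le\ve$ from $|y|>\ve$), together with $e^{\td\mu s}\le e^{\td\mu r}$ (as $s\le r$, $\td\mu\ge0$), one gets
\[
u(r)\le K^{(\ve)}+2^{\lambda}C_{\vp}\int_{\mcO}|w|^{\lambda}\,(e^{(p+\tau)\td\mu r}|Y_{r}^{(\ve,\d)}|^{p+\tau})^{1/(p+\tau)}\,e^{-\mu_{s}-\td\mu s}d\xi,
\]
where $K^{(\ve)}$ absorbs $\tfrac{\ve}{2}\int_{\mcO}\vp e^{-\mu_{s}}d\xi$. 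For $d\ge2$ one then chooses $\lambda=q/p^{*}$ and applies Hölder with the three exponents $p^{*}$, $p+\tau$, $m$ determined by $\tfrac1{p^{*}}+\tfrac1{p+\tau}+\tfrac1m=1$, i.e.\ $\tfrac1m=\tfrac1p-\tfrac1{p+\tau}>0$ (admissible precisely because $p>1$): the $|w|^{\lambda}$-factor produces $\|w\|_{q}^{q/p^{*}}$, the $Y$-factor is controlled by $h_{2}(r,p,\tau,\d,\ve,\|x_{0}\|_{p+\tau}^{p+\tau})^{1/p}$ via \eqref{eq:eps_lp_bound} and Gronwall, and the last factor is bounded by Hypothesis~\ref{hyp:noise} with exponent $m$. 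Hence $u(r)-K^{(\ve)}\lesssim\|w\|_{q}^{q/p^{*}}h_{2}(r,p,\tau,\d,\ve,\|x_{0}\|_{p+\tau}^{p+\tau})^{1/p}$, and the Sobolev embedding $H_{0}^{1}(\mcO)\hookrightarrow L^{q}(\mcO)$ with $q$ as in \eqref{eq:Sobolev constant-1} upgrades this to $\|\nabla\phi^{(\ve)}(Y_{r}^{(\ve,\d)})\|_{2}^{2}\gtrsim(u(r)-K^{(\ve)})^{2p^{*}/q}h_{2}(r,p,\tau,\d,\ve,\|x_{0}\|_{p+\tau}^{p+\tau})^{-2p^{*}/(pq)}$. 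For $d=1$ one uses instead $H_{0}^{1}(\mcO)\hookrightarrow L^{\infty}(\mcO)$: either $\|Y_{r}^{(\ve,\d)}\|_{\infty}<\ve$, in which case $u(r)\le K^{(\ve)}$, or $\|Y_{r}^{(\ve,\d)}\|_{\infty}\ge\ve$, in which case $\|\phi^{(\ve)}(Y_{r}^{(\ve,\d)})\|_{\infty}=1$ and $\|\nabla\phi^{(\ve)}(Y_{r}^{(\ve,\d)})\|_{2}^{2}\gtrsim\|\phi^{(\ve)}(Y_{r}^{(\ve,\d)})\|_{\infty}^{2}=\|\phi^{(\ve)}(Y_{r}^{(\ve,\d)})\|_{\infty}$ (up to a Poincaré constant, suppressed as elsewhere); by monotonicity of $u$ this dichotomy is one of subintervals.

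Feeding these lower bounds into $\dot u\le-(\inf_{\xi}e^{\mu_{r}-\mu_{s}})\|\nabla\phi^{(\ve)}(Y_{r}^{(\ve,\d)})\|_{2}^{2}$ yields, on $\{u>K^{(\ve)}\}$, the differential inequality $\tfrac{d}{dr}(u-K^{(\ve)})\le-g^{(\ve,\d)}(r)(u-K^{(\ve)})^{\a}$ with $\a=\tfrac{2p^{*}}{q}$; this exponent is strictly below $1$ precisely because $p>\tfrac d2\vee1$ forces $p^{*}<\tfrac d{d-2}$ for $d\ge3$, by the choice of $q$ for $d=2$, and trivially ($\a=0$) for $d=1$. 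Then one writes $\tfrac{d}{dr}(u-K^{(\ve)})^{1-\a}\le-(1-\a)g^{(\ve,\d)}$ on $\{u>K^{(\ve)}\}$, integrates in $r$, and uses the monotonicity of $u$ to deal with the first time $u$ reaches the level $K^{(\ve)}$ (after which $u$ stays $\le K^{(\ve)}$), obtaining $u(t)-K^{(\ve)}\le\big(((u(s)-K^{(\ve)})^{1-\a}-(1-\a)\int_{s}^{t}g^{(\ve,\d)}(r)dr)\vee0\big)^{1/(1-\a)}$; inserting the bound on $u(s)$ from the first step concludes. I expect the delicate step to be the interpolation for $d\ge2$: choosing $\lambda=q/p^{*}$ in the pointwise bound, verifying admissibility of the three Hölder exponents, and matching the left-endpoint weight $e^{-\mu_{s}}$ to Hypothesis~\ref{hyp:noise} via $e^{\td\mu s}\le e^{\td\mu r}$ so that all constants depend only on $p,\tau,\o$ and not on the interval $[s,t]$ --- it is exactly the extraction of the dissipation to the power $q/p^{*}$, equivalently $\a<1$, that makes the comparison ODE reach the level $K^{(\ve)}$ in finite ``time'' $\int_{s}^{t}g^{(\ve,\d)}$.
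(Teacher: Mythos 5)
Your proposal is correct and follows essentially the same route as the paper: the weighted energy inequality \eqref{eq:eps_main_est} with $\varrho=\vp e^{-\mu_{s}}$, dropping the perturbative terms via the sign hypothesis, the interpolation of $u(r)-K^{(\ve)}$ against the Sobolev-controlled dissipation using H\"older, Hypothesis \ref{hyp:noise} and the Gronwall bound from \eqref{eq:eps_lp_bound} (the paper packages this through the level-set bound $\|\phi^{(\ve)}(Y_{r})\|_{q}^{q}\ge|\{|Y_{r}|>\ve\}|$ rather than your pointwise bound $\psi^{(\ve)}\le\tfrac{\ve}{2}+|\phi^{(\ve)}|^{q/p^{*}}|\cdot|$ with a three-exponent H\"older, but the resulting exponents are identical), and finally the ODE comparison with $\a=\tfrac{2p^{*}}{q}<1$, which the paper delegates to Lemma \ref{lem:ODE} while you argue directly via monotonicity of $u$. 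Your suppression of the Sobolev and volume constants matches the paper's own treatment, so there is no substantive gap.
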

\begin{proof}
From \eqref{eq:eps_main_est} with $\varrho=\vp e^{-\mu_{s}}$ we have
\begin{align*}
 & \int_{\mcO}\psi^{(\ve)}(Y_{v}^{(\ve,\d)})\vp e^{-\mu_{s}}d\xi+\int_{u}^{v}\int_{\mcO}\vp e^{\mu_{r}-\mu_{s}}|\nabla\phi^{(\ve)}(Y_{r}^{(\ve,\d)})|^{2}d\xi dr\\
 & \le\int_{\mcO}\psi^{(\ve)}(Y_{u}^{(\ve,\d)})\vp e^{-\mu_{s}}d\xi,
\end{align*}
for all $[u,v]\subseteq[s,t]$ (where $[s,t]$ is as in the statement). We now use the Sobolev embedding $H_{0}^{1}(\mcO)\hookrightarrow L^{q}(\mcO)$ with $q>2$ as in \eqref{eq:Sobolev constant-1}. The simpler case $d=1$ has already been outlined in the introduction. Hence, we shall restrict to the case $d\ge2$ in the following. We obtain
\begin{align*}
 & \int_{\mcO}\psi^{(\ve)}(Y_{v}^{(\ve,\d)})\vp e^{-\mu_{s}}d\xi+\int_{u}^{v}\inf_{\xi\in\mcO}e^{\mu_{r}-\mu_{s}}\left(\int_{\mcO}|\phi^{(\ve)}(Y_{r}^{(\ve,\d)})|^{q}d\xi\right)^{\frac{2}{q}}dr\\
 & \le\int_{\mcO}\psi^{(\ve)}(Y_{u}^{(\ve,\d)})\vp e^{-\mu_{s}}d\xi.
\end{align*}
We have
\begin{align*}
\int_{\mcO}|\phi^{(\ve)}(Y_{r}^{(\ve,\d)})|^{q}d\xi & =\int_{|Y_{r}^{(\ve,\d)}|\le\ve}|\phi^{(\ve)}(Y_{r}^{(\ve,\d)})|^{q}d\xi+\int_{|Y_{r}^{(\ve,\d)}|>\ve}|\phi^{(\ve)}(Y_{r}^{(\ve,\d)})|^{q}d\xi\\
 & \ge|\{|Y_{r}^{(\ve,\d)}|>\ve\}|.
\end{align*}
This implies
\begin{align}
 & \int_{\mcO}\psi^{(\ve)}(Y_{v}^{(\ve,\d)})\vp e^{-\mu_{s}}d\xi+\int_{u}^{v}\inf_{\xi\in\mcO}e^{\mu_{r}-\mu_{s}}|\{|Y_{r}^{(\ve,\d)}|>\ve\}|^{\frac{2}{q}}dr\label{eq:main-est-2}\\
 & \le\int_{\mcO}\psi^{(\ve)}(Y_{u}^{(\ve,\d)})\vp e^{-\mu_{s}}d\xi.\nonumber 
\end{align}
We note
\begin{align}
 & \int_{\mcO}\psi^{(\ve)}(Y_{v}^{(\ve,\d)})\vp e^{-\mu_{s}}d\xi\nonumber \\
 & =\int_{|Y_{v}^{(\ve,\d)}|\le\ve}\psi^{(\ve)}(Y_{v}^{(\ve,\d)})\vp e^{-\mu_{s}}d\xi+\int_{|Y_{v}^{(\ve,\d)}|>\ve}\psi^{(\ve)}(Y_{v}^{(\ve,\d)})\vp e^{-\mu_{s}}d\xi\nonumber \\
 & \le C_{\vp}\|e^{-\mu_{s}}\|_{\infty}\frac{\ve}{2}+\int_{|Y_{v}^{(\ve,\d)}|>\ve}|Y_{v}^{(\ve,\d)}|\vp e^{-\mu_{s}}d\xi\label{eq:bound_by_mass}\\
 & \le C_{\vp}\|e^{-\mu_{s}}\|_{\infty}\frac{\ve}{2}+\|e^{-\mu_{s}}\vp Y_{v}^{(\ve,\d)}\|_{p}|\{|Y_{v}^{(\ve,\d)}|>\ve\}|^{\frac{1}{p^{*}}},\quad\forall p>1.\nonumber 
\end{align}
From \eqref{eq:eps_lp_bound} we obtain
\begin{align*}
\int_{\mcO}e^{p\td\mu t}|Y_{t}^{(\ve,\d)}|^{p}d\xi\le & e^{\d\int_{0}^{t}h_{1}(r,p)dr}\int_{\mcO}|y_{0}|^{p}d\xi+C\ve^{p-1}\int_{0}^{t}e^{\d\int_{r}^{t}h_{1}(\tau,p)d\tau}h_{1}(r,p)dr,
\end{align*}
with $h_{1}(r,p):=\sup_{\xi\in\mcO}|\D e^{\mu_{r}+p\td\mu r}|$. Thus, by Hypothesis \ref{hyp:noise}: 
\begin{align}
\int_{\mcO}|e^{-\mu_{s}}\vp Y_{r}^{(\ve,\d)}|^{p}= & \int_{\mcO}e^{p(-\mu_{s}-\td\mu s)}e^{-p\td\mu(r-s)}\vp^{p}e^{p\td\mu r}|Y_{r}^{(\ve,\d)}|^{p}d\xi\nonumber \\
\le & C_{1}C_{\vp}^{p}\left(\int_{\mcO}e^{(p+\tau)\td\mu r}|Y_{r}^{(\ve,\d)}|^{p+\tau}d\xi\right)^{\frac{p}{p+\tau}}\nonumber \\
\le & C_{1}C_{\vp}^{p}\Big(e^{\d\int_{0}^{r}h_{1}(s,p+\tau)ds}\int_{\mcO}|x_{0}|^{p+\tau}d\xi\label{eq:Y-bound-1}\\
 & +C\ve^{p+\tau-1}\int_{0}^{r}e^{\d\int_{s}^{t}h_{1}(w,p+\tau)dw}h_{1}(s,p+\tau)ds\Big)^{\frac{p}{p+\tau}}\nonumber \\
=: & h_{2}(r,p,\tau,\d,\ve,\|x_{0}\|_{p+\tau}^{p+\tau}),\nonumber 
\end{align}
for all $p\ge1$ and $r\ge s\vee t_{0}$, where  $C_{1}=C_{1}(p,\tau,\o)$, $t_{0}=t_{0}(p,\tau,\o)$ is as in Hypothesis \ref{hyp:noise}. From \eqref{eq:bound_by_mass} and \eqref{eq:Y-bound-1} we conclude
\begin{align*}
 & |\{|Y_{v}^{(\ve,\d)}|>\ve\}|\\
 & \ge\frac{1}{\|e^{-\mu_{s}}\vp Y_{v}^{(\ve,\d)}\|_{p}^{p^{*}}}\left(\int_{\mcO}\psi^{(\ve)}(Y_{v}^{(\ve,\d)})\vp e^{-\mu_{s}}d\xi-C_{\vp}\|e^{-\mu_{s}}\|_{\infty}\frac{\ve}{2}\right)^{p^{*}}\\
 & \ge\frac{1}{h_{2}(v,p,\tau,\d,\ve,\|x_{0}\|_{p+\tau}^{p+\tau})^{\frac{p^{*}}{p}}}\left(\int_{\mcO}\psi^{(\ve)}(Y_{v}^{(\ve,\d)})\vp e^{-\mu_{s}}d\xi-C_{\vp}\|e^{-\mu_{s}}\|_{\infty}\frac{\ve}{2}\right)^{p^{*}}.
\end{align*}
Using this in \eqref{eq:main-est-2} yields
\begin{align*}
 & \int_{\mcO}\psi^{(\ve)}(Y_{v}^{(\ve,\d)})\vp e^{-\mu_{s}}d\xi+\int_{u}^{v}g^{(\ve,\d)}(r)\left(\int_{\mcO}\psi^{(\ve)}(Y_{r}^{(\ve,\d)})\vp e^{-\mu_{s}}d\xi-K^{(\ve)}\right)^{\frac{2p^{*}}{q}}dr\\
 & \le\int_{\mcO}\psi^{(\ve)}(Y_{u}^{(\ve,\d)})\vp e^{-\mu_{s}}d\xi,
\end{align*}
 for all $[u,v]\subseteq[s,t]$ with 
\begin{align*}
g^{(\ve,\d)}(r): & =\frac{\inf_{\xi\in\mcO}e^{\mu_{r}-\mu_{s}}}{h_{2}(r,p,\tau,\d,\ve,\|x_{0}\|_{p+\tau}^{p+\tau})^{\frac{2p^{*}}{pq}}}\\
K^{(\ve)} & :=C_{\vp}\|e^{-\mu_{s}}\|_{\infty}\frac{\ve}{2}.
\end{align*}
We will require $\a:=\frac{2p^{*}}{q}<1$ which is equivalent to choosing $p>\frac{q}{q-2}$. Note that $\frac{q}{q-2}=\frac{d}{2}$ for $d\ge3$ and $\frac{q}{q-2}$ can be chosen arbitrarily close to $1$ for $d=2$. Hence, let $p>\frac{d}{2}$ arbitrary, fixed.

With $f(u):=\int_{\mcO}\psi^{(\ve)}(Y_{u}^{(\ve,\d)})\vp e^{-\mu_{s}}d\xi=:\|Y_{u}^{(\ve,\d)}\|_{\vp e^{-\mu_{s}}}$ we obtain
\begin{align*}
f(v)+\int_{u}^{v}g^{(\ve,\d)}(r)\left(f(r)-K^{(\ve)}\right)dr\le & f(u),\quad\text{for all }[u,v]\subseteq[s,t].
\end{align*}
Note that $f$ is continuous since $Y^{(\ve,\d)}\in C([0,T];L^{2}(\mcO))$. Hence, from Lemma \ref{lem:ODE} we obtain
\[
\|Y_{t}^{(\ve,\d)}\|_{\vp e^{-\mu_{s}}}\le\left(\left(\|Y_{s}^{(\d)}\|_{\vp e^{-\mu_{s}}}-K^{\ve}\right)^{(1-\a)}-(1-\a)\int_{s}^{t}g^{(\ve,\d)}(r)dr\vee0\right)^{\frac{1}{1-\a}}+K^{\ve}.
\]
From \eqref{eq:Y-bound-1} (with $p=1$ we conclude)
\begin{align*}
 & \|Y_{t}^{(\ve,\d)}\|_{\vp e^{-\mu_{s}}}\\
 & \le\left(\left(h_{2}(s,1,\tau,\d,\ve,\|x_{0}\|_{1+\tau}^{1+\tau})-K^{\ve}\right)^{(1-\a)}-(1-\a)\int_{s}^{t}g^{(\ve,\d)}(r)dr\vee0\right)^{\frac{1}{1-\a}}+K^{\ve}.
\end{align*}

\end{proof}

We may now derive the key energy estimate for $Y$ from Lemma \ref{lem:main_estimate_eps-delta} by taking the limits $\d\to0$ then $\ve\to0$. We obtain
\begin{lem}
\label{lem:delta_main_bound-1-1}Assume that Hypothesis \ref{hyp:noise} is satisfied. Let $y_{0}\in L^{\infty}(\mcO)$ and $Y$ be a solution to \eqref{eq:TPME} constructed in Theorem \ref{thm:exist_transf}. Let $\tau>0$ and $[s,t]\subseteq\R_{+}$ such that 
\[
\sup_{r\in[s,t]}\D\vp e^{\mu_{r}-\mu_{s}}\le0
\]
and $s\ge t_{0}=t_{0}(p,\tau,\o)$, with $t_{0}$ as in Hypothesis \ref{hyp:noise}. Then, for all $p>\frac{d}{2}\vee1$ we have
\[
\int_{\mcO}\psi(Y_{t})\vp e^{-\mu_{s}}d\xi\le\left(\left(C_{1}C_{\vp}\|x_{0}\|_{1+\tau}\right)^{(1-\a)}-(1-\a)\int_{s}^{t}g(r)dr\vee0\right)^{\frac{1}{1-\a}},
\]
where $C_{1}=C_{1}(p,\tau,\o)$, $\a=\frac{2p^{*}}{q}<1$,
\[
g(r)=\begin{cases}
\left(\inf_{\xi\in\mcO}e^{\mu_{r}-\mu_{s}}\right)\|\eta_{r}\|_{\infty} & ,\text{ for }d=1\\
\frac{\inf_{\xi\in\mcO}e^{\mu_{r}-\mu_{s}}}{(C_{1}C_{\vp}\|x_{0}\|_{p+\tau})^{\frac{2p^{*}}{q}}} & ,\text{ for }d\ge2,
\end{cases}
\]
and
\begin{align*}
q & =\infty\text{ if }d=1,\\
q & \in(2,\infty)\text{ arbitrary if }d=2,\\
q & =\frac{2d}{d-2}\text{ if }d\ge3.
\end{align*}
\end{lem}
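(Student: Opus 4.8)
The plan is to obtain the estimate by passing to the limit $\d\to0$ and then $\ve\to0$ in the inequality of Lemma \ref{lem:main_estimate_eps-delta}, exploiting the convergences of the approximating families established in Section \ref{sec:existence}. Fix $\ve>0$ first. By Proposition \ref{prop:eps_ex} (together with uniqueness for \eqref{eq:eps_delta-approx}, which upgrades subsequential convergence to full convergence) we have $Y^{(\ve,\d)}\rightharpoonup^{*}Y^{(\ve)}$ in $L^{\infty}([0,T];L^{p}(\mcO))$ for every $p\ge1$, $Y^{(\ve,\d)}\to Y^{(\ve)}$ in $L^{2}([0,T];H^{-1})$, and $\phi^{(\ve)}(Y^{(\ve,\d)})\rightharpoonup\phi^{(\ve)}(Y^{(\ve)})$ in $L^{2}([0,T];H_{0}^{1}(\mcO))$. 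On the right-hand side of Lemma \ref{lem:main_estimate_eps-delta} the $\d$-dependence sits only in the factors $e^{\d\int h_{1}}$ inside $h_{2}$, which converge to $1$; hence $h_{2}(\cdot,\cdot,\cdot,\d,\ve,\cdot)$ and $g^{(\ve,\d)}$ converge pointwise to their $\d=0$ analogues. For $d\ge2$ the $g^{(\ve,\d)}$ are bounded uniformly in $\d$ on $[s,t]$ (the denominator $h_{2}$ is bounded below away from $0$ and $\inf_{\xi}e^{\mu_{r}-\mu_{s}}$ is a fixed continuous function), so $\int_{s}^{t}g^{(\ve,\d)}\to\int_{s}^{t}g^{(\ve)}$ by dominated convergence; for $d=1$ one instead uses that $v\mapsto\int_{s}^{t}(\inf_{\xi}e^{\mu_{r}-\mu_{s}})\|v_{r}\|_{\infty}dr$ is convex and strongly continuous, hence weakly lower semicontinuous, on $L^{2}([s,t];H_{0}^{1}(\mcO))$ (because $H_{0}^{1}(\mcO)\hookrightarrow L^{\infty}(\mcO)$ when $d=1$), which gives $\int_{s}^{t}g^{(\ve)}\le\liminf_{\d}\int_{s}^{t}g^{(\ve,\d)}$.

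For the left-hand side I would integrate the inequality of Lemma \ref{lem:main_estimate_eps-delta} in time against a nonnegative $\eta\in L^{\infty}([s,t])$ with $\|\eta\|_{1}=1$, pass to the limit $\d\to0$ using weak lower semicontinuity of the convex functional $x\mapsto\int_{\mcO}\psi^{(\ve)}(x)\vp e^{-\mu_{s}}d\xi$ on $L^{2}(\mcO)$, and recover the pointwise-in-$t$ statement from weak continuity of $t\mapsto Y_{t}^{(\ve)}$ in $L^{p}(\mcO)$ --- exactly the device used to prove \eqref{eq:del_gradient_bound}. Since $u\mapsto(u\vee0)^{1/(1-\a)}$ is continuous and nondecreasing, this yields for $Y^{(\ve)}$ the bound of Lemma \ref{lem:delta_main_bound-1-1} with $g$ replaced by $g^{(\ve)}$, with the initial term $h_{2}(s,1,\tau,0,\ve,\|x_{0}\|_{1+\tau}^{1+\tau})$, and with the additive constants $K^{(\ve)}=C_{\vp}\|e^{-\mu_{s}}\|_{\infty}\tfrac{\ve}{2}$ still present.

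Next I would let $\ve\to0$ along the sequence $\ve_{n}$ used to construct $Y$ in Theorem \ref{thm:exist_transf}, so that $Y^{(\ve_{n})}\rightharpoonup^{*}Y$ in $L^{\infty}([0,T];L^{p}(\mcO))$, $Y^{(\ve_{n})}\to Y$ in $L^{2}([0,T];H^{-1})$ and $\phi^{(\ve_{n})}(Y^{(\ve_{n})})\rightharpoonup\eta$ in $L^{2}([0,T];H_{0}^{1}(\mcO))$. Since $p>\tfrac{d}{2}\vee1\ge1$ we have $\ve^{p+\tau-1}\to0$ and $K^{(\ve)}\to0$, hence $h_{2}(s,1,\tau,0,\ve,\|x_{0}\|_{1+\tau}^{1+\tau})\to C_{1}C_{\vp}\|x_{0}\|_{1+\tau}$ and $h_{2}(r,p,\tau,0,\ve,\|x_{0}\|_{p+\tau}^{p+\tau})^{2p^{*}/(pq)}\to(C_{1}C_{\vp}\|x_{0}\|_{p+\tau})^{2p^{*}/q}$ after absorbing the powers of the generic constant into $C_{1}$; this produces exactly the function $g$ and the initial term in the statement. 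Using $|\psi^{(\ve)}-\psi|\le2\ve$ from \eqref{eq:phi-approx-error} one replaces $\psi^{(\ve)}$ by $\psi$ up to an $O(\ve)$ error on the left-hand side, and then weak lower semicontinuity of $x\mapsto\int_{\mcO}\psi(x)\vp e^{-\mu_{s}}d\xi$ together with weak continuity of $t\mapsto Y_{t}$ in $L^{p}(\mcO)$ transfers the estimate to $Y$; on the right-hand side one passes to the $\limsup$, again using continuity of $u\mapsto(u\vee0)^{1/(1-\a)}$ and, for $d=1$, lower semicontinuity of $v\mapsto\int_{s}^{t}(\inf_{\xi}e^{\mu_{r}-\mu_{s}})\|v_{r}\|_{\infty}dr$ so that $\int_{s}^{t}g\le\liminf_{\ve}\int_{s}^{t}g^{(\ve)}$, yielding the claimed bound.

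The main obstacle I anticipate is not the algebra of the limiting constants but two measure-theoretic points: passing from weak-$*$ $L^{\infty}_{t}L^{p}_{x}$ convergence to an estimate valid for every fixed time $t$ (which is why the time-integrated test-function argument, rather than a naive pointwise limit, is needed), and, in dimension one, making sure the dissipative term $\int_{s}^{t}(\inf_{\xi}e^{\mu_{r}-\mu_{s}})\|\eta_{r}\|_{\infty}dr$ survives the limit --- this rests on the weak lower semicontinuity of the $L^{\infty}$-norm on $H_{0}^{1}(\mcO)$, which holds precisely because point evaluations are bounded linear functionals there when $d=1$. Everything else --- convergence of $h_{1},h_{2}$, continuity of $u\mapsto(u\vee0)^{1/(1-\a)}$, and collapsing generic constants --- is routine.
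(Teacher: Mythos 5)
Your proposal is correct and follows essentially the same route as the paper: pass to the limit $\d\to0$ and then $\ve\to0$ in Lemma \ref{lem:main_estimate_eps-delta}, integrating the estimate in time against a normalized nonnegative test function, using weak lower semicontinuity of the convex functionals together with weak continuity of $t\mapsto Y_{t}^{(\ve)}$ (resp. $t\mapsto Y_{t}$) in $L^{p}(\mcO)$ to recover the pointwise-in-time bound, and then identifying the limits of $h_{2}$, $K^{(\ve)}$ and $g^{(\ve,\d)}$ exactly as in the paper. Your justification of the survival of the $d=1$ dissipative term via weak lower semicontinuity of $v\mapsto\int_{s}^{t}\left(\inf_{\xi\in\mcO}e^{\mu_{r}-\mu_{s}}\right)\|v_{r}\|_{\infty}dr$ on $L^{2}([s,t];H_{0}^{1}(\mcO))$ is a detail the paper leaves implicit, and it is the right one given that the right-hand side is nonincreasing in $\int_{s}^{t}g$.
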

\begin{proof}
We first recall that the weak convergences $Y^{(\ve,\d)}\rightharpoonup Y^{(\ve)}$ and $Y^{(\ve)}\rightharpoonup Y$ hold as weak limits in $L^{p}(\mcO_{T})$ for all $p>1$. Integrating the main estimate from Lemma \ref{lem:main_estimate_eps-delta} against a nonnegative testfunction $\eta\in C^{1}([s,t])$ with $\|\eta\|_{L^{1}([s,t])}=1$ and using weak lower semicontinuity of $v\mapsto\int_{s}^{t}\eta_{r}\int_{\mcO}\psi^{(\ve)}(v_{r})\vp e^{-\mu_{s}}d\xi dr$ on $L^{p}(\mcO_{T})$ we obtain by taking $\d\to0$:
\begin{align*}
 & \int_{s}^{t}\eta_{r}\int_{\mcO}\psi^{(\ve)}(Y_{r}^{(\ve)})\vp e^{-\mu_{s}}d\xi dr\\
 & \le\int_{s}^{t}\eta_{r}\left(\left(h_{2}(s,1,\tau,\ve,\|x_{0}\|_{1+\tau}^{1+\tau})-K^{\ve}\right)^{(1-\a)}-(1-\a)\int_{s}^{r}g^{(\ve)}(\tau)d\tau\vee0\right)^{\frac{1}{1-\a}}dr+K^{\ve}.
\end{align*}
Since $t\mapsto Y_{t}^{(\ve)}$ is weakly continuous on $L^{p}(\mcO)$ this implies 
\begin{align*}
 & \int_{\mcO}\psi^{(\ve)}(Y_{r}^{(\ve)})\vp e^{-\mu_{s}}d\xi\\
 & \le\left(\left(h_{2}(s,1,\tau,\ve,\|x_{0}\|_{1+\tau}^{1+\tau})-K^{\ve}\right)^{(1-\a)}-(1-\a)\int_{s}^{r}g^{(\ve)}(\tau)d\tau\vee0\right)^{\frac{1}{1-\a}}+K^{\ve},
\end{align*}
for all $r\in[s,t]$. Due to \eqref{eq:phi-approx-error} this implies
\begin{align*}
 & \int_{\mcO}\psi(Y_{r}^{(\ve)})\vp e^{-\mu_{s}}d\xi\\
 & \le\left(\left(h_{2}(s,1,\tau,\ve,\|x_{0}\|_{1+\tau}^{1+\tau})-K^{\ve}\right)^{(1-\a)}-(1-\a)\int_{s}^{r}g^{(\ve)}(\tau)d\tau\vee0\right)^{\frac{1}{1-\a}}+K^{\ve}+C\ve,
\end{align*}
for all $r\in[s,t]$. Using the same reasoning as for $\d\to0$ allows to take the limit $\ve\to0$, which yields
\[
\int_{\mcO}\psi(Y_{t})\vp e^{-\mu_{s}}d\xi\le\left(\left(h_{2}(1,\tau,\|x_{0}\|_{1+\tau}^{1+\tau})\right)^{(1-\a)}-(1-\a)\int_{s}^{t}g(\tau)d\tau\vee0\right)^{\frac{1}{1-\a}},
\]
where
\begin{align*}
h_{2}(p,\tau,x)=\lim_{\ve\to0}\lim_{\d\to0} & h_{2}(s,p,\tau,\d,\ve,x):=C_{1}C_{\vp}^{p}x^{\frac{p}{p+\tau}}
\end{align*}
and
\[
g(r)=\lim_{\ve\to0}\lim_{\d\to0}g^{(\ve,\d)}(r)=\begin{cases}
\left(\inf_{\xi\in\mcO}e^{\mu_{r}-\mu_{s}}\right)\|\eta_{r}\|_{\infty} & ,\text{ for }d=1\\
\frac{\inf_{\xi\in\mcO}e^{\mu_{r}-\mu_{s}}}{C_{1}C_{\vp}^{p}\left(\int_{\mcO}|x_{0}|^{p+\tau}d\xi\right)^{\frac{2p^{*}}{q(p+\tau)}}} & ,\text{ for }d\ge2.
\end{cases}
\]
$ $\end{proof}
\begin{thm}
\label{thm:FTE-1}Assume that Hypothesis \ref{hyp:noise} is satisfied%
\footnote{In fact, we only need Hypothesis \ref{hyp:noise} to hold for certain $p\in\R_{+}$ depending on the integrability of the initial condition $x_{0}$ and the dimension $d$. %
}. Let $x_{0}\in L^{\infty}(\mcO)$, $X$ be the unique solution to \eqref{eq:BTW_SOC} and let 
\[
\tau_{0}(\o):=\inf\{t\ge0|X_{t}(\o)=0,\text{ for a.e. }\xi\in\mcO\}.
\]
Then finite time extinction holds, i.e. 
\[
\P[\tau_{0}<\infty]=1.
\]
The extinction time $\tau_{0}(\o)$ may be chosen uniformly for $x_{0}$ bounded in $L^{p}(\mcO)$, for any
\[
p>\begin{cases}
1 & ,\text{ if }d=1\\
\frac{d}{2} & ,\text{ if }d\ge2.
\end{cases}
\]
\end{thm}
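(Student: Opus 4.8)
To prove Theorem~\ref{thm:FTE-1} the plan is to read the result off from the energy inequality of Lemma~\ref{lem:delta_main_bound-1-1} together with a ``freezing'' property of Brownian paths. First I would reduce the statement about $X$ to one about $Y$: by Theorem~\ref{thm:transformation} the unique solution of \eqref{eq:BTW_SOC} is $X_{t}=e^{-\mu_{t}}Y_{t}$ with $Y$ a solution of \eqref{eq:TPME} constructed as in Section~\ref{sec:existence}, and since $e^{-\mu_{t}(\xi)}>0$ pointwise, $X_{t}=0$ a.e.\ is equivalent to $Y_{t}=0$ a.e., and this in turn to $\int_{\mcO}\psi(Y_{t})\vp e^{-\mu_{s}}d\xi=0$ for some (equivalently, every) $s\ge0$. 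Hence it suffices to exhibit, $\P$-a.s., a finite time $t$ at which this weighted $L^{1}$-norm vanishes.

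Next I would fix a deterministic smallness radius for the Brownian increments. From
\[
\D\vp e^{\mu_{r}-\mu_{s}}=e^{\mu_{r}-\mu_{s}}\big(-1+2\nabla\vp\cdot\nabla(\mu_{r}-\mu_{s})+\vp\big(|\nabla(\mu_{r}-\mu_{s})|^{2}+\D(\mu_{r}-\mu_{s})\big)\big),
\]
together with $\vp\ge1$ and $\|\mu_{r}-\mu_{s}\|_{C^{2}(\mcO)}\le\big(\sum_{k}\|f_{k}\|_{C^{2}(\mcO)}\big)|\b_{r}-\b_{s}|$, I would choose $\rho>0$, depending only on $\mcO$ and $f$, such that $|\b_{r}-\b_{s}|\le\rho$ for all $r\in[s,t]$ forces the hypothesis $\sup_{r\in[s,t]}\D\vp e^{\mu_{r}-\mu_{s}}\le0$ of Lemma~\ref{lem:delta_main_bound-1-1}; on such an interval one moreover has $\inf_{\xi\in\mcO}e^{\mu_{r}-\mu_{s}}\ge e^{-C_{f}\rho}=:c_{\rho}>0$, with $C_{f}=\big(\sum_{k}\|f_{k}\|_{\infty}^{2}\big)^{1/2}$. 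I would then establish Lemma~\ref{lem:constant_BM}: for every $\rho,L>0$, $\P$-a.s.\ infinitely many of the independent events $\{\sup_{r\in[kL,(k+1)L]}|\b_{r}-\b_{kL}|\le\rho\}$, $k\in\N$, occur, which is immediate from the second Borel--Cantelli lemma since each has the same positive probability.

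Then I would conclude as follows. Fix $p>\frac{d}{2}\vee1$ and $\tau>0$, and let $t_{0}=t_{0}(p,\tau,\o)$, $C_{1}=C_{1}(p,\tau,\o)$ and $\a=\frac{2p^{*}}{q}<1$ be as in Lemma~\ref{lem:delta_main_bound-1-1}. For $d\ge2$ the term $g$ there obeys, on any good interval $[s,s+L]$ with $s\ge t_{0}$, the deterministic bound $g(r)\ge c_{\rho}\big(C_{1}C_{\vp}\|x_{0}\|_{p+\tau}\big)^{-\a}>0$; for $d=1$ (so $\a=0$) one splits cases according to whether $Y_{r}=0$ a.e.\ for some $r\in[s,s+L]$---in which case extinction has already occurred---or else $Y_{r}\ne0$ a.e.\ throughout, forcing $\|\eta_{r}\|_{\infty}=1$ and hence $g(r)\ge c_{\rho}$ a.e.\ on the interval. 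In either dimension I would pick a (random, $\P$-a.s.\ finite) length $L$ so large that these lower bounds give $(1-\a)\int_{s}^{s+L}g(r)\,dr\ge(C_{1}C_{\vp}\|x_{0}\|_{1+\tau})^{1-\a}$, invoke Lemma~\ref{lem:constant_BM} to find $k\in\N$ with $s:=kL\ge t_{0}$ and $\sup_{r\in[s,s+L]}|\b_{r}-\b_{s}|\le\rho$, and apply Lemma~\ref{lem:delta_main_bound-1-1} on $[s,s+L]$: the bracket there vanishes, so $\int_{\mcO}\psi(Y_{s+L})\vp e^{-\mu_{s}}d\xi=0$, i.e.\ $X_{s+L}=0$ a.e., whence $\tau_{0}\le(k+1)L<\infty$ $\P$-a.s. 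Since $L$ enters only through $C_{1}$, $\|x_{0}\|_{1+\tau}$ and $\|x_{0}\|_{p+\tau}$, and since $p,\tau$ can be chosen so that $1+\tau$ and $p+\tau$ do not exceed the exponent named in the statement (which is $>\frac{d}{2}\vee1\ge1$), boundedness of $\mcO$ upgrades this to a bound depending on $x_{0}$ only through that single norm, yielding the uniformity assertion.

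I expect the genuinely hard analysis to be already contained in Lemma~\ref{lem:delta_main_bound-1-1}---converting the highly singular-degenerate drift $\D\sgn$ into the dissipative quantity $\int g$ while keeping the perturbative second term under control (whence the detour through \eqref{eq:eps_delta-approx} and Hypothesis~\ref{hyp:noise})---so that the present argument is largely an assembly. The two points that still require care here are: (i) $t_{0}$ and $C_{1}$ are $\o$-dependent, so the interval length $L$ must be taken random and one genuinely needs arbitrarily long ``frozen'' stretches of the Brownian path occurring \emph{after} the random time $t_{0}(\o)$---which is exactly what the Borel--Cantelli step delivers; and (ii) in dimensions $d\ge2$ the dissipation exponent $\a=2p^{*}/q$ is strictly less than $1$ precisely when $p>\frac{d}{2}$, and this is what makes the ODE comparison of Lemma~\ref{lem:ODE} produce \emph{finite-time} extinction rather than merely exponential decay, explaining the dimensional restriction on $p$.
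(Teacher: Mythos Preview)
Your proposal is correct and follows essentially the same route as the paper: reduce to $Y$ via Theorem~\ref{thm:transformation}, choose $\rho$ so that small Brownian increments force the sign condition on $\D\vp e^{\mu_r-\mu_s}$, use Lemma~\ref{lem:constant_BM} (Borel--Cantelli) to locate arbitrarily long ``frozen'' intervals beyond the random threshold $t_0(\o)$, and read off extinction from Lemma~\ref{lem:delta_main_bound-1-1}. Your treatment of $d=1$ via the dichotomy on $\|\eta_r\|_\infty$ is slightly more explicit than the paper, which defers that case to the outline in Section~\ref{sub:proof-outline}, and your handling of the exponents for the uniformity claim matches the paper's choice $\td p=(p+\tfrac{d}{2})/2$, $\tau=p-\td p$.
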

\begin{proof}
Let $Y$ be a solution to \eqref{eq:TPME-intro-2} as constructed in Theorem \ref{thm:transformation}. Let $p>\frac{d}{2}$ ($p>1$ in case $d=1$) and set $\td p:=\frac{p+\frac{d}{2}}{2}$, $\tau:=p-\td p$. From Lemma \ref{lem:delta_main_bound-1-1} we recall: For all intervals $[s,t]$ with $s\ge t_{0}=t_{0}(\td p,\tau,\o)=t_{0}(p,d,\o)$ such that $\sup_{r\in[s,t]}\D\vp e^{\mu_{r}-\mu_{s}}\le0$ we have 
\begin{align}
\int_{\mcO}|Y_{t}|\vp e^{-\mu_{s}}d\xi & \le\left(\left(C_{1}C_{\vp}\|x_{0}\|_{1+\tau}\right)^{1-\a}-(1-\a)\int_{s}^{t}g(r)dr\vee0\right)^{\frac{1}{1-\a}}\label{eq:limit_main_est}\\
 & \le\left(\left(C_{1}C_{\vp}\|x_{0}\|_{p}\right)^{1-\a}-(1-\a)\int_{s}^{t}g(r)dr\vee0\right)^{\frac{1}{1-\a}}\nonumber 
\end{align}
with
\[
g(r)=\begin{cases}
\left(\inf_{\xi\in\mcO}e^{\mu_{r}-\mu_{s}}\right)\|\eta_{r}\|_{\infty} & ,\text{ for }d=1\\
\frac{\inf_{\xi\in\mcO}e^{\mu_{r}-\mu_{s}}}{(C_{1}C_{\vp}\|x_{0}\|_{p})^{\frac{2\td p^{*}}{q}}} & ,\text{ for }d\ge2,
\end{cases}
\]
and the same constants as in Lemma \ref{lem:delta_main_bound-1-1}. Note that $\td p>\frac{d}{2}$ and thus $\a=\frac{2\td p^{*}}{q}<1$. We will now restrict to the more difficult case $d\ge2$, while $d=1$ follows similarly.

By Lemma \ref{lem:constant_BM} there is a set $\O_{0}\subset\O$ of full $\P$-measure, such that for all $\ve>0,\o\in\O$ we may find arbitrarily large intervals $[s,t]$ with $s\ge t_{0}$ such that $\sup_{r\in[s,t]}|\b_{r}(\o)-\b_{s}(\o)|\le\ve$ for all $\o\in\O_{0}$. Also note
\begin{align*}
\D\vp e^{\mu_{r}-\mu_{s}} & =e^{\mu_{r}-\mu_{s}}\left(-1+2\nabla(\mu_{r}-\mu_{s})\cdot\nabla\vp+\vp(|\nabla(\mu_{r}-\mu_{s})|^{2}+\D(\mu_{r}-\mu_{s}))\right)\\
 & \to-1,
\end{align*}
for $\|\mu_{\cdot}-\mu_{s}\|_{C^{0,2}([s,t]\times\mcO)}\to0$. Thus, for $\ve$ small enough we have
\[
\sup_{r\in[s,t]}\D\vp e^{\mu_{r}-\mu_{s}}\le0,
\]
whenever $\sup_{r\in[s,t]}|\b_{r}(\o)-\b_{s}(\o)|\le\ve$. In conclusion, we may choose arbitrarily large intervals $[s,t]$ such that $\sup_{r\in[s,t]}\D\vp e^{\mu_{r}-\mu_{s}}\le0$ and 
\[
g(r)\ge\left(\frac{1}{2(1\vee C_{1}C_{\vp}\|x_{0}\|_{p})}\right)^{\frac{2\td p^{*}}{q}}.
\]
On such intervals we have from \eqref{eq:limit_main_est}:
\begin{align*}
 & \int_{\mcO}|Y_{t}|\vp e^{-\mu_{s}}d\xi\\
 & \le\left((C_{1}C_{\vp}\|x_{0}\|_{p})^{1-\a}-|t-s|(1-\a)\left(\frac{1}{2(1\vee C_{1}C_{\vp}\|x_{0}\|_{p})}\right)^{\frac{2\td p^{*}}{q}}\vee0\right)^{\frac{1}{1-\a}}.
\end{align*}
Since we may choose $|t-s|$ arbitrary large this implies that for all $\o\in\O_{0}$ there is a $\tau_{0}(\o)$ such that $ $
\[
Y_{\tau_{0}}=0,\text{ a.e. in }\mcO.
\]
The claim now follows from Theorem \ref{thm:transformation}.
\end{proof}

\section{Decay due to (Itô-)noise\label{sec:exp_decay}}

Using similar ideas as in the proof of Theorem \ref{thm:FTE-1} we may (partially) sharpen a result obtained in \cite[Theorem 2.3]{BR12b}. More precisely, assuming $\td\mu=\frac{1}{2}\sum_{k=1}^{N}f_{k}^{2}>0$ it has been shown in \cite{BR12b} that
\[
\int_{K}X_{t}d\xi\le\|x_{0}\|_{2}|K|^{\frac{1}{2}}e^{\sup_{K}\td\mu^{\frac{1}{2}}\left(\sum_{k=1}^{N}(\b_{t}^{k})^{2}\right)^{\frac{1}{2}}}e^{-\frac{t}{2}\inf_{K'}\td\mu},
\]
for every compact set $K\subseteq\mcO$ and every compact neighborhood $K'\supseteq K$. In contrast, for the result presented here we do \textit{not} assume any non-degeneracy condition on the noise. Moreover, the exponential rate of decay is more explicit and its relation to the decay of geometric Brownian motion becomes evident.
\begin{prop}
\label{prop:decay_due_to_ito_noise}Let $x_{0}\in L^{\infty}(\mcO)$ and let $X$ be the corresponding unique solution to \eqref{eq:BTW_SOC}. Then, $\P$-a.s. and for all $t\ge0$
\begin{equation}
X_{t}\le e^{-\mu_{t}-\td\mu t}\|x_{0}\|_{\infty},\quad\text{for a.e. }\xi\in\mcO.\label{eq:exp_decay}
\end{equation}
\end{prop}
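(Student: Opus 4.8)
The plan is to obtain \eqref{eq:exp_decay} as a pointwise comparison principle for the transformed equation \eqref{eq:TPME-intro-2}, exploiting the $L^\infty$-bound $\|y_0\|_\infty = \|x_0\|_\infty$ together with the fact that $\sgn$ is bounded by $1$ and that the Itô-correction term $-\td\mu Y_t$ provides an exponential decay factor. First I would work at the level of the smooth, non-degenerate approximation \eqref{eq:eps_delta-approx} (or, if convenient, the fully smooth \eqref{eq:tau_eps_delta-approx}), where $Y^{(\ve,\d)}$ is a classical solution and the ordinary maximum principle applies. The candidate supersolution is $w_t(\xi):=e^{-\mu_t(\xi)-\td\mu(\xi)t}\|x_0\|_\infty$, which satisfies $w_0 = \|x_0\|_\infty \ge y_0$, and $w_t \ge 0$ on $\partial\mcO$; I would check that $w$ is a (super)solution of the approximate equation by computing $\partial_t w_t = (-\partial_t\mu_t - \td\mu)\,w_t$ and comparing with $e^{\mu_t}\Delta\phi^{(\ve)}(w_t) + \d e^{\mu_t}\Delta w_t - \td\mu w_t$; since $\phi^{(\ve)}$ is monotone increasing and bounded by $1$, the "bad" term $e^{\mu_t}\Delta\phi^{(\ve)}(w_t)$ should be controllable — indeed for $\d$ small it is dominated, or one argues directly that $w$ together with $\eta = \phi^{(\ve)}(w)$ forms a supersolution in the weak sense of Definition \ref{def:soln}.

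The cleaner route, which I would actually carry out, is to avoid regularity worries and instead test the difference: set $v_t := Y_t - w_t$ and show $(v_t)^+ = 0$ for all $t$. Using the equation for $Y$ and the (semimartingale/ODE in $H^{-1}$) equation for $w$ — note $w$ solves $\partial_t w = -(\partial_t\mu + \td\mu)w$ exactly, with no diffusion — one computes $\frac{d}{dt}\int_\mcO (v_t)^+ \varrho\,d\xi$ for a suitable weight $\varrho$ (e.g. $\varrho = e^{\mu_t}\vp$ or simply $\varrho=\vp$ as in Lemma \ref{lem:energy_bound}), obtaining a term of the form $\int_{\{v_t>0\}} \nabla\eta_t\cdot\nabla(e^{\mu_t}\varrho)\,d\xi$ from the $\Delta\sgn(Y)$ part and a term $-\int_{\{v_t>0\}}\td\mu\, v_t\,\varrho\,d\xi \le 0$ from the zero-order part. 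The diffusion term needs care: where $v_t>0$ we have $Y_t > w_t > 0$, so $\eta_t \in \sgn(Y_t)$ means $\eta_t = 1$ there, hence $\nabla\eta_t = 0$ on $\{v_t>0\}$ up to sets of measure zero (using $\eta_t\in H^1_0$ and the fact that $\eta_t$ equals the constant $1$ on that set), killing the diffusion contribution. This leaves $\frac{d}{dt}\int_\mcO(v_t)^+\varrho\,d\xi \le C\int_\mcO (v_t)^+\varrho\,d\xi$ for a constant depending on $\|\mu\|_{C^{0,2}}$ and $\td\mu$, and Gronwall with $(v_0)^+=0$ gives $(v_t)^+\equiv 0$, i.e. $Y_t \le w_t$, which after back-transformation $X_t = e^{-\mu_t}Y_t$ yields exactly \eqref{eq:exp_decay}.

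The main obstacle I anticipate is making the step "$\eta_t$ is constant on $\{v_t>0\}$, hence $\nabla\eta_t = 0$ there" rigorous, since $Y$ is only known to lie in $L^2(\mcO_T)\cap W^{1,2}([0,T];H^{-1})$ and $\eta\in L^2([0,T];H^1_0)$ with no pointwise control of $\nabla Y$. One cannot directly differentiate $(Y_t-w_t)^+$ in $H^{-1}$ against $\eta_t$ without justification. I would handle this by a regularization/smoothing of the positive-part function — replace $(\cdot)^+$ by a smooth convex $\beta_\kappa$ with $\beta_\kappa' \uparrow \mathbf 1_{\{\cdot>0\}}$ — and test the equation against $\beta_\kappa'(Y_t-w_t)\varrho$, which is a legitimate $H^1_0$ test function; the diffusion term becomes $-\int \nabla\eta_t\cdot\nabla\big(\beta_\kappa'(Y_t-w_t)e^{\mu_t}\vp\big)$, and since $\nabla\eta_t = \sgn'(Y_t)\nabla Y_t$ is supported (in the limiting approximation sense) on $\{Y_t = 0\}$ while $\beta_\kappa'(Y_t - w_t)$ is supported near $\{Y_t > w_t\}$, these supports are disjoint, forcing the term to vanish as $\kappa\to 0$. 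Alternatively — and this is perhaps the most economical proof — I would simply run the whole argument on the $\ve$-approximation, where $\phi^{(\ve)}$ is Lipschitz and all computations are classical, establish $Y^{(\ve)}_t \le e^{-\mu_t - \td\mu t}(\|x_0\|_\infty + c\ve)$ with the $c\ve$ coming from the $\psi^{(\ve)}$-vs-$\psi$ discrepancy \eqref{eq:phi-approx-error} or a similar error, and then pass to the limit $\ve\to 0$ using the convergence $Y^{(\ve)}\to Y$ from Theorem \ref{thm:exist_transf} and Theorem \ref{thm:transformation}, which immediately gives \eqref{eq:exp_decay}.
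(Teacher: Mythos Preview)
Your overall strategy---prove a comparison principle for the transformed equation at the level of the smooth approximation and then pass to the limit---is the paper's strategy, and the key mechanism you identify (where the supersolution is strictly positive, $\sgn\equiv 1$ so the diffusion term vanishes) is exactly right. However, your candidate supersolution is wrong, and this breaks the argument as written.

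You take $w_t(\xi)=e^{-\mu_t(\xi)-\td\mu(\xi)t}\|x_0\|_\infty$ as a supersolution for the $Y$-equation. But the bound \eqref{eq:exp_decay} is a bound on $X$, and since $X_t=e^{-\mu_t}Y_t$, it is equivalent to $Y_t\le e^{-\td\mu t}\|x_0\|_\infty$. So the correct barrier for $Y$ is $K_t(\xi):=e^{-\td\mu(\xi)t}\|x_0\|_\infty$, with no $\mu_t$ in it. Your $w$ has two concrete problems: first, $t\mapsto\mu_t$ is a Brownian path and is not differentiable, so the computation $\partial_t w_t=(-\partial_t\mu_t-\td\mu)w_t$ is not available; second, even formally, comparing $\partial_t w_t$ with $-\td\mu w_t$ leaves a residual term $-\partial_t\mu_t\,w_t$ of indeterminate sign, so $w$ is not a supersolution of \eqref{eq:TPME-intro-2}. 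Finally, your back-transformation at the end would give $X_t\le e^{-2\mu_t-\td\mu t}\|x_0\|_\infty$, not \eqref{eq:exp_decay}.

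Once you replace $w$ by $K_t=e^{-\td\mu t}\|x_0\|_\infty$, your ``most economical'' route coincides with what the paper does: work at the fully smooth $(\tau,\ve,\d)$-level, observe that $K^{(\tau)}:=e^{-\td\mu^{(\tau)}t}\|x_0\|_\infty$ is bounded below by a strictly positive constant, so $\phi^{(\tau,\ve)}(K^{(\tau)})\equiv 1$ for $\tau,\ve$ small and hence $\D\phi^{(\tau,\ve)}(K^{(\tau)})=0$, apply the classical maximum principle, and pass to the limit. Two refinements in the paper are worth noting: the viscosity term $\d e^{\mu_t}\D K_t$ is not zero and has no sign, so the paper perturbs to $K^{(\tau)}+\nu t$ with $\nu>0$ to absorb it on a time interval $[0,T_0(\d)]$ with $T_0(\d)\uparrow\infty$; and the passage to the limit is done by noting that $K_t-Y_t^{(\tau,\ve,\d)}\ge 0$ is preserved under weak $H^{-1}$-limits because the cone of nonnegative distributions is weakly closed. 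Your alternative idea of testing $(Y-K)^+$ directly could also be made to work with the correct $K$, but as you yourself note, the justification of $\nabla\eta=0$ on $\{Y>K\}$ at the limiting regularity is delicate; the paper sidesteps this entirely by staying at the classical level until the very end.
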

\begin{proof}
In the following we will restrict to the case $x_{0}\not\equiv0$. For $x_{0}\equiv0$ we may proceed similarly. We consider the approximants $Y^{(\tau,\ve,\d)}$ solving \eqref{eq:tau_eps_delta-approx}. Let $M:=\|x_{0}\|_{\infty}$ and 
\[
K^{(\tau)}(t,\xi):=e^{-\td\mu^{(\tau)}(\xi)t}M+\nu t,
\]
with $\nu>0$ arbitrary, fixed. Then 
\[
\partial_{t}K^{(\tau)}=-\td\mu^{(\tau)}e^{-\td\mu^{(\tau)}t}M+\nu\ge-\td\mu^{(\tau)}K^{(\tau)}+\nu
\]
 and
\begin{align*}
e^{\mu_{t}^{(\tau)}}\D\phi^{(\tau,\ve)}(K_{t}^{(\tau)}) & =e^{\mu_{t}^{(\tau)}}\left(\ddot{\phi}{}^{(\tau,\ve)}(K_{t}^{(\tau)})|\nabla K_{t}^{(\tau)}|^{2}+\dot{\phi}{}^{(\tau,\ve)}(K_{t}^{(\tau)})\D K_{t}^{(\tau)}\right).
\end{align*}
Since $\td\mu^{(\tau)}$ is uniformly bounded we have $K^{(\tau)}\ge c>0$ for some $c>0$. We note that $\phi^{(\ve)}(r)=\sgn(r)$ for $|r|>\ve.$ Hence, also $\phi^{(\tau,\ve)}(r)=\sgn(r)$ for $|r|>\ve+\tau$ and for $\ve,\tau>0$ sufficiently small we get
\[
\ddot{\phi}{}^{(\tau,\ve)}(K^{(\tau)}),\dot{\phi}{}^{(\tau,\ve)}(K^{(\tau)})=0.
\]
Thus,
\begin{align*}
e^{\mu_{t}^{(\tau)}}\D\phi^{(\tau,\ve)}(K_{t}^{(\tau)}) & =0
\end{align*}
for all $\ve,\tau$ small enough. Moreover, we note
\[
\d e^{\mu_{t}^{(\tau)}}\D K_{t}^{(\tau)}=\d t\left(e^{\mu_{t}^{(\tau)}-\td\mu^{(\tau)}t}\D\td\mu^{(\tau)}+te^{\mu_{t}^{(\tau)}-\td\mu^{(\tau)}t}|\nabla\td\mu^{(\tau)}|^{2}\right)\le\nu,
\]
for all $t\d$ small enough. Hence,
\[
e^{\mu_{t}^{(\tau)}}\D\phi^{(\tau,\ve)}(K_{t}^{(\tau)})+\d e^{\mu_{t}^{(\tau)}}\D K_{t}^{(\tau)}-\td\mu^{(\tau)}K_{t}^{(\tau)}\le\nu-\td\mu^{(\tau)}K_{t}^{(\tau)}\le\partial_{t}K^{(\tau)}.
\]
and $K^{(\tau)}$ is a supersolution to \eqref{eq:tau_eps_delta-approx} for each $(\tau,\ve,\d)$ small enough on a time-interval $[0,T_{0}(\d)]$, where $T_{0}(\d)\uparrow\infty$ for $\d\to0$. Consequently, 
\[
Y_{t}^{(\tau,\ve,\d)}(\xi)\le K_{t}^{(\tau)}(\xi),\quad\forall(t,\xi)\in[0,T_{0}(\d)]\times\mcO.
\]
In other words, $K_{t}^{(\tau)}-Y_{t}^{(\tau,\ve,\d)}$ is a nonnegative distribution in $H^{-1}$ for all $t\in[0,T_{0}(\d)]$. Since all the limits $\tau,\ve,\d\to0$ in the construction of $Y$ hold for all $t\in[0,T]$ weakly in $H^{-1}$ and the convex cone of nonnegative distributions in $H^{-1}$ is weakly closed this implies 
\[
K_{t}-Y_{t}\ge0\quad\text{in }H^{-1}\ \text{for all }t\ge0,
\]
(using $T_{0}(\d)\uparrow\infty$ for $\d\to0$), where
\[
K(t,\xi):=e^{-\td\mu(\xi)t}M+\nu t.
\]
Since also $K_{t}-Y_{t}\in L^{2}(\mcO)$ for all $t\in[0,T]$ this implies 
\[
Y_{t}\le K_{t},\quad\text{for all }t\in[0,T],\ \text{a.e. }\xi\in\mcO.
\]
Now letting $\nu\to0$ implies the claim.\end{proof}
\begin{rem}
On an informal level the proof of Proposition \ref{prop:decay_due_to_ito_noise} relies on choosing 
\[
K(t,\xi)=e^{-\td\mu(\xi)t}\|x_{0}\|_{\infty}
\]
as a supersolution to \eqref{eq:TPME}. Since $K\ge c>0$ for some $c>0$ we have (informally)
\[
\D\sgn(K)\equiv0.
\]
Hence, the observed decay neglects the diffusive effect and is purely due to the noise and its Itô form. This explains the geometric Brownian motion type of decay in \eqref{eq:exp_decay} and is in sharp contrast to our main result Theorem \ref{thm:FTE-1} which is stable under vanishing noise (i.e. if $\td\mu\downarrow0$). 
\end{rem}
\appendix

\section{Finite time extinction for ODE}
\begin{lem}
\label{lem:ODE}Let $ $$f,g:\R_{+}\to\R_{+}$, $f$ lower semicontinuous and $g\in L_{loc}^{1}(\R_{+})$ such that there is a $K>0$ so that
\begin{equation}
f(t)\le f(s)-\int_{s}^{t}g(r)(f(r)-K)^{\a}dr,\quad\forall0\le s\le t,\label{eq:singular_ODE-1}
\end{equation}
for some $\a\in(0,1)$. Then
\[
f(t)\le\left((f(0)-K)^{1-\a}-(1-\a)\int_{0}^{t}g(r)dr\vee0\right)^{\frac{1}{1-\a}}+K,\quad\forall t\in\R_{+}.
\]
\end{lem}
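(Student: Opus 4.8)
The plan is to read the right-hand side as (the integrated form of) the solution of the comparison ODE and then show that $f$ stays below it. Set
\[
h(t):=\left((f(0)-K)^{1-\a}-(1-\a)\int_{0}^{t}g(r)\,dr\vee0\right)^{\frac{1}{1-\a}}+K ,
\]
which is nonincreasing, satisfies $h\ge K$, and (where positive) solves $h'=-g\,(h-K)^{\a}$ with $h(0)=f(0)$; the goal is $f(t)\le h(t)$ for all $t\ge0$. First I would record two elementary consequences of \eqref{eq:singular_ODE-1} (with the convention that $(f(r)-K)^{\a}$ means $((f(r)-K)\vee0)^{\a}$ wherever $f(r)<K$): $f$ is nonincreasing, and hence, together with lower semicontinuity, $U:=\{t\ge0\mid f(t)>K\}$ is a relatively open sub-interval of $\R_{+}$ containing $0$, i.e.\ $U=[0,\tau^{*})$ for some $\tau^{*}\in[0,\infty]$, with $f(\tau^{*})\le K$ whenever $\tau^{*}<\infty$. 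For $t\ge\tau^{*}$ this already settles the claim: \eqref{eq:singular_ODE-1} with $s=\tau^{*}$ gives $f(t)\le f(\tau^{*})\le K\le h(t)$. So the content is the estimate on $[0,\tau^{*})$.

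On $[0,\tau^{*})$ put $F:=f-K>0$, so that \eqref{eq:singular_ODE-1} reads $F(t)\le F(s)-\int_{s}^{t}gF^{\a}$, and I would aim to prove the ``integrated'' inequality
\[
F(t)^{1-\a}\le F(s)^{1-\a}-(1-\a)\int_{s}^{t}g(r)\,dr ,\qquad 0\le s\le t<\tau^{*};
\]
taking $s=0$ and using $F(t)>0$ then gives $f(t)=F(t)+K\le h(t)$. If $f$ were $C^{1}$ this would just be ``$(F^{1-\a})'=(1-\a)F^{-\a}F'\le-(1-\a)g$ a.e., integrate''. For the merely lower semicontinuous $f$ I would argue by a telescoping sum. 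Fix $s<t<\tau^{*}$, so that $F\ge F(t)=:m>0$ on $[s,t]$; take a partition $s=r_{0}<\dots<r_{n}=t$, set $\Delta_{i}:=\int_{r_{i}}^{r_{i+1}}g\ge0$ and $\eta:=\max_{i}\Delta_{i}$. Monotonicity of $F$ on $[r_{i},r_{i+1}]$ gives $F(r_{i+1})\le F(r_{i})-F(r_{i+1})^{\a}\Delta_{i}$, hence $F(r_{i})\ge F(r_{i+1})\,(1+F(r_{i+1})^{\a-1}\Delta_{i})$; combining this with the elementary inequality $(1+x)^{1-\a}-1\ge(1-\a)(1+x)^{-\a}x$ for $x\ge0$, the identity $F(r_{i+1})^{1-\a}F(r_{i+1})^{\a-1}=1$, and the bound $F(r_{i+1})^{\a-1}\Delta_{i}\le m^{\a-1}\eta$ yields
\[
F(r_{i})^{1-\a}-F(r_{i+1})^{1-\a}\ge(1-\a)\,\Delta_{i}\,(1+m^{\a-1}\eta)^{-\a}.
\]
Summing over $i$ and telescoping gives $F(s)^{1-\a}-F(t)^{1-\a}\ge(1-\a)(1+m^{\a-1}\eta)^{-\a}\int_{s}^{t}g$; since $g\in L^{1}_{loc}$ the function $r\mapsto\int_{0}^{r}g$ is uniformly continuous on $[s,t]$, so the partition can be refined until $\eta\to0$, and passing to the limit yields the integrated inequality.

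The step I expect to be the real obstacle is precisely this last one: the Hölder exponent $\a<1$ makes the nonlinearity continuous but not Lipschitz, so neither dividing by $F^{\a}$ (illegitimate without differentiability) nor a textbook Gronwall comparison applies to the non-smooth $f$. The discretization above is what circumvents it, and it works only because (i) $F$ is monotone, which lets me lower-bound $\int_{r_i}^{r_{i+1}}gF^{\a}$ by $F(r_{i+1})^{\a}\Delta_{i}$, (ii) $F$ is bounded below away from $0$ on every $[s,t]$ with $t<\tau^{*}$, which keeps the correction factor $(1+F(r_{i+1})^{\a-1}\Delta_{i})^{-\a}$ close to $1$, and (iii) $g\in L^{1}_{loc}$, so the mesh of $(\Delta_{i})$ can be driven to $0$. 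With the integrated inequality in hand on $[0,\tau^{*})$ and the trivial bound $f(t)\le K\le h(t)$ on $[\tau^{*},\infty)$, the two ranges combine to give $f\le h$ everywhere, which is the assertion.
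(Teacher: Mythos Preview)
Your argument is correct, and its overall architecture matches the paper's: reduce to $K=0$ (you do this implicitly by passing to $F=f-K$), observe that $f$ is nonincreasing so the set $\{f>K\}$ is an initial interval $[0,\tau^{*})$, dispose of $t\ge\tau^{*}$ trivially, and establish the comparison $f\le h$ on $[0,\tau^{*})$. The difference is in this last step. The paper simply notes that on any $[0,(\tau_{1}\wedge\tau_{2})-\varepsilon]$ both $f$ and $h$ are bounded below by a positive constant, so $u\mapsto u^{\a}$ is Lipschitz there, and then invokes ``comparison holds'' for the ODE $\dot h=-gh^{\a}$ with $f$ as an integral subsolution. You instead prove the integrated inequality $F(t)^{1-\a}\le F(s)^{1-\a}-(1-\a)\int_{s}^{t}g$ directly by partitioning $[s,t]$, using monotonicity of $F$ to get $F(r_{i+1})\le F(r_{i})-F(r_{i+1})^{\a}\Delta_{i}$, applying the elementary bound $(1+x)^{1-\a}-1\ge(1-\a)x(1+x)^{-\a}$, and passing to the mesh limit via absolute continuity of $\int g$. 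Your route is longer but entirely self-contained: it never appeals to a comparison principle for non-differentiable subsolutions, which the paper uses without spelling out. One small omission: you assume $0\in U$, i.e.\ $f(0)>K$; the case $f(0)\le K$ is trivial (then $f(t)\le f(0)\le K\le h(t)$ for all $t$) but should be mentioned, as the paper does for $f(0)=0$ after its reduction.
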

\begin{proof}
We first note that by subtracting $K$ from \eqref{eq:singular_ODE-1} and replacing $f$ by $f-K$ we may suppose $K=0$.

If $f(0)=0$ then $f(t)=0$ for all $t\in[0,T]$ and nothing needs to be shown. Hence, assume $f(0)=q>0$ and let
\[
h(t):=\left(q^{1-\a}-(1-\a)\int_{0}^{t}g(r)dr\vee0\right)^{\frac{1}{1-\a}},\quad\text{for }t\in\R_{+}.
\]
Let
\begin{align*}
\tau_{1} & =\inf\{t\ge0|f(t)=0\}\\
\tau_{2} & =\inf\{t\ge0|h(t)=0\}.
\end{align*}
Since $f$ is lower semicontinuous and $h$ is continuous we have $\tau_{1},\tau_{2}>0$ and $f$, $h$ are strictly positive on $[0,\tau_{1}-\ve]$ ($[0,\tau_{2}-\ve]$ resp.) for all $\ve>0$. Thus, $h$ is the unique solution to
\begin{align}
\dot{h} & =-gh^{\a}\label{eq:singular_ODE}\\
h(0) & =q,\nonumber 
\end{align}
on $[0,\tau_{2}-\ve]$, while $f$ is a subsolution to the same equation. Since $f,h$ are strictly positive on $[0,(\tau_{1}\wedge\tau_{2})-\ve]$ comparison holds for \eqref{eq:singular_ODE} and thus
\[
f(t)\le h(t),\quad\forall t\in[0,(\tau_{1}\wedge\tau_{2})-\ve].
\]
By lower semicontinuity of $f$ we conclude $f\le h$ on $[0,\tau_{1}\wedge\tau_{2}]$. In particular, $\tau_{1}\le\tau_{2}$ and 
\[
f(t)\le h(t),\quad\forall t\in[0,\tau_{1}],
\]
which proves the claim. 
\end{proof}

\section{Some properties of Brownian Motion}
\begin{lem}
Let $\b$ be an $\R^{N}$-valued Brownian motion, $\ve>0$. Then
\[
\P[\sup_{r\in[0,t]}|\b_{r}|<\ve]>0.
\]
\end{lem}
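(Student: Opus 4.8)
The plan is to reduce the statement to one spatial dimension and then use that the topological support of one-dimensional Wiener measure is the whole path space; I also indicate a self-contained alternative for the one-dimensional step. Since $|\b_r|\le\sqrt{N}\,\max_{1\le k\le N}|\b^{k}_{r}|$, the event $\bigcap_{k=1}^{N}\{\sup_{r\in[0,t]}|\b^{k}_{r}|<\ve/\sqrt{N}\}$ is contained in $\{\sup_{r\in[0,t]}|\b_r|<\ve\}$, and the coordinate processes $\b^{1},\dots,\b^{N}$ are independent one-dimensional Brownian motions, so
\[
\P\Big[\sup_{r\in[0,t]}|\b_r|<\ve\Big]\ \ge\ \prod_{k=1}^{N}\P\Big[\sup_{r\in[0,t]}|\b^{k}_{r}|<\tfrac{\ve}{\sqrt{N}}\Big].
\]
Hence it suffices to prove that for a one-dimensional Brownian motion $\b$ and every $\delta>0$, $t>0$ one has $\P[\sup_{r\in[0,t]}|\b_r|<\delta]>0$.

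\textbf{Quick route.} The law of $(\b_r)_{r\in[0,t]}$ is Wiener measure on $C_{0}([0,t];\R)$, whose topological support is all of $C_{0}([0,t];\R)$ (Stroock--Varadhan support theorem). The set $\{\omega\in C_{0}([0,t];\R):\|\omega\|_{\infty}<\delta\}$ is open and nonempty (it contains the zero path), hence has strictly positive Wiener measure, which is exactly the claim.

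\textbf{Self-contained alternative.} By Brownian scaling $\sup_{r\in[0,t]}|\b_r|$ has the law of $\sqrt{t}\,\sup_{r\in[0,1]}|\b_r|$, so one may take $t=1$. Fix $n\in\N$ and decompose the path: on each interval $[(k-1)/n,k/n]$ write $\b_r$ as the linear interpolation between $\b_{(k-1)/n}$ and $\b_{k/n}$ plus an independent Brownian bridge $B^{(k)}$ from $0$ to $0$, the vector $(\b_{j/n})_{j=0}^{n}$ and the bridges $B^{(1)},\dots,B^{(n)}$ being mutually independent. On the event $\{\max_{0\le j\le n}|\b_{j/n}|<\delta/2\}\cap\bigcap_{k=1}^{n}\{\sup_{r}|B^{(k)}_{r}|<\delta/2\}$ one has $\sup_{r\in[0,1]}|\b_r|<\delta$, because the interpolation term is a convex combination of two grid values; by independence,
\[
\P\Big[\sup_{r\in[0,1]}|\b_r|<\delta\Big]\ \ge\ \P\Big[\max_{0\le j\le n}|\b_{j/n}|<\tfrac{\delta}{2}\Big]\,\prod_{k=1}^{n}\P\Big[\sup_{r}|B^{(k)}_{r}|<\tfrac{\delta}{2}\Big].
\]
The first factor is the everywhere-positive Gaussian density integrated over a nonempty open subset of $\R^{n}$, hence positive. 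For the bridge factors, $B^{(k)}_{r}=\b_r-\b_{(k-1)/n}-n(r-\tfrac{k-1}{n})(\b_{k/n}-\b_{(k-1)/n})$ is dominated in absolute value by $2\sup_{s\in[(k-1)/n,k/n]}|\b_s-\b_{(k-1)/n}|$, whose $L^{2}$-norm is $O(n^{-1/2})$ by Doob's maximal inequality; thus $\sup_{r}|B^{(k)}_{r}|\to 0$ in probability as $n\to\infty$, and for $n$ large enough each bridge factor exceeds $1/2$. Fixing such an $n$ gives a strictly positive lower bound.

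\textbf{Main obstacle.} The only genuine subtlety is that the elementary reflection-principle identity controls only the \emph{one-sided} running maximum, $\P[\sup_{r\in[0,t]}\b_r<\delta]=\P[|\b_t|<\delta]>0$, whereas here the \emph{two-sided} supremum $\sup_{r\in[0,t]}|\b_r|$ is needed; the two one-sided events are negatively correlated, so one cannot simply multiply, and it is exactly this gap that forces either the support theorem or the bridge decomposition above. All remaining steps are routine.
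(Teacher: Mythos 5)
Your proof is correct, but it takes a genuinely different route from the paper. The paper argues by symmetrization: if $\b^{1},\b^{2}$ are independent copies of the Brownian motion, then $(\b^{1}-\b^{2})/\sqrt{2}$ is again a Brownian motion; by separability of $C([0,t])$ some ball $B_{\ve}(x)$ carries positive mass $q$ under the law of $\b^{1}$, so by independence both copies lie in that same ball with probability $q^{2}>0$, which forces the normalized difference to stay uniformly small (up to the harmless factor $\sqrt{2}$, i.e.\ one should start from balls of radius $\ve/\sqrt{2}$). That argument uses no Gaussian computation at all, only separability, independence, and the fact that the scaled difference is again a Brownian motion. Your argument instead reduces to $d=1$ via $|\b_{r}|\le\sqrt{N}\max_{k}|\b_{r}^{k}|$ and coordinate independence (fine), and then either quotes full support of Wiener measure (correct, though Stroock--Varadhan is more machinery than needed: full support of Wiener measure itself is the relevant classical fact) or runs the grid-plus-bridge decomposition. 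The self-contained variant checks out: the grid vector and the bridges are indeed mutually independent, the bound $|B^{(k)}_{r}|\le 2\sup_{s\in[(k-1)/n,k/n]}|\b_{s}-\b_{(k-1)/n}|$ is valid, and Doob plus Chebyshev make each bridge factor at least $1/2$ for $n$ large, so the product is at least $(1/2)^{n}>0$ for that fixed $n$. In short, your route is quantitative and self-contained (or reduces to a standard support statement), whereas the paper's symmetrization trick buys maximal brevity with no estimates whatsoever; both are valid proofs of the lemma.
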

\begin{proof}
Let $\b^{1},\b^{2}$ be independent $\R^{N}$-valued Brownian motions over the interval $[0,t]$, then $\b:=\frac{(\b^{1}\text{\textminus}\b^{2})}{\sqrt{2}}$ is also an $\R^{N}$-valued Brownian motion. There exists at least one ball $B_{\ve}(x)\subseteq C([0,t])$ such that $\mcL(\b^{1})(B_{\ve}(x))=\P[\b^{1}\in B_{\ve}(x)]=q>0$. By independence, 
\[
\P[\b^{1}\in B_{\ve}(x)\cap\b^{2}\in B_{\ve}(x)]=\P[\b^{1}\in B_{\ve}(x)]\P[\b^{2}\in B_{\ve}(x)]=q^{2}>0.
\]
Hence,
\[
\P[\b\in B_{\ve}(0)]\ge\P[\b^{1}\in B_{\ve}(x)\cap\b^{2}\in B_{\ve}(x)]\ge q^{2}>0.
\]
\end{proof}
\begin{lem}
\label{lem:constant_BM}Let $\b$ be an $\R^{N}$-valued Brownian motion. Then, there is a set $\O_{0}\subseteq\O$ of full $\P$-measure such that for all $m,n\in\N$, $\ve>0$, $\o\in\O_{0}$ there is an interval $[s,t]\subseteq[m,\infty)$ of length $|t-s|=n$ such that
\[
\sup_{r\in[s,t]}|\b_{r}(\o)-\b_{s}(\o)|<\ve.
\]
\end{lem}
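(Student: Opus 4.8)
The plan is to exhaust the half-line by consecutive intervals of length $n$ and to show, via the second Borel--Cantelli lemma, that infinitely many of them carry the desired oscillation bound; the preceding lemma supplies exactly the positive probability needed as input.

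First I would reduce to countably many parameters. It is enough to construct, for each fixed pair $n,j\in\N$, a set of full $\P$-measure on which the conclusion holds with $\ve=\tfrac1j$: the countable intersection $\O_0$ of all these sets still has full measure, and for an arbitrary $\ve>0$ one simply picks $j$ with $\tfrac1j\le\ve$. The quantifier over the starting level $m$ will be absorbed automatically, since the Borel--Cantelli argument produces suitable intervals with arbitrarily large left endpoint.

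Next, for fixed $n$ and $\ve=\tfrac1j$ I would introduce the events
\[
A_k:=\Big\{\,\sup_{r\in[0,n]}\big|\b_{kn+r}-\b_{kn}\big|<\ve\,\Big\},\qquad k\in\N .
\]
By the independence and stationarity of Brownian increments, the processes $r\mapsto\b_{kn+r}-\b_{kn}$ on $[0,n]$ are i.i.d.\ copies of Brownian motion on $[0,n]$, so the $A_k$ are independent events all of the same probability $p_{n,\ve}:=\P[\sup_{r\in[0,n]}|\b_r|<\ve]$, which is strictly positive by the preceding lemma. Hence $\sum_k\P[A_k]=\infty$, and the second Borel--Cantelli lemma gives $\P[\limsup_k A_k]=1$. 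I then set $\O_{n,j}:=\limsup_k A_k$ and $\O_0:=\bigcap_{n,j\in\N}\O_{n,j}$, which has full measure.

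Finally, on $\O_0$ and for given $m,n\in\N$ and $\ve>0$, choose $j$ with $\tfrac1j\le\ve$ and then $k$ with $kn\ge m$ and $\o\in A_k$; the interval $[s,t]:=[kn,(k+1)n]$ lies in $[m,\infty)$, has length exactly $n$, and satisfies $\sup_{r\in[s,t]}|\b_r(\o)-\b_s(\o)|<\tfrac1j\le\ve$. The argument is essentially routine; the only mild subtlety worth flagging is the bookkeeping that converts the countably many almost-sure statements into a single exceptional null set valid simultaneously for all $m$, $n$, and $\ve$, which is precisely what the countable intersection achieves.
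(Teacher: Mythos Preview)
Your argument is correct and matches the paper's own proof essentially line for line: both partition $[0,\infty)$ into consecutive length-$n$ blocks, use independence of Brownian increments together with the preceding lemma to see that the oscillation bound holds on some block almost surely, and then pass to a countable intersection over $\ve$ (and $n$) to obtain a single full-measure set. The only cosmetic differences are that you invoke the second Borel--Cantelli lemma explicitly while the paper computes $\prod_k(1-q)=0$ directly, and you absorb the quantifier over $m$ via the $\limsup$ whereas the paper first shifts to $m=0$; neither changes the substance.
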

\begin{proof}
Let $m,n\in\N$, $\ve>0$. We first note that by replacing $\b$ by $\b_{t}^{m}:=\b_{t+m}-\b_{m}$ we may assume $m=0$. For each $k\in\N$ we consider the interval $[kn,(k+1)n]$ and note that $\b_{t}^{k}(\o):=\b_{t+kn}(\o)-\b_{kn}(\o)=\b_{t}(\t_{kn}\o)$ are independent Brownian motions on $[0,n]$. Hence,

\[
\P[\sup_{r\in[0,n]}|\b_{r}^{k}|<\ve]=:q>0,
\]
for all $k$. We conclude
\begin{align*}
\P[\sup_{r\in[kn,k(n+1)]}|\b_{r}-\b_{kn}|\ge\ve,\ \forall k] & =\P[\sup_{r\in[0,n]}|\b_{r}^{k}|\ge\ve,\ \forall k]\\
 & =\prod_{k}\P[\sup_{r\in[0,n]}|\b_{r}^{k}|\ge\ve]\\
 & =\prod_{k}(1-q)=0.
\end{align*}
Since it is sufficient to consider $\ve\in\Q_{+}$ the claim follows.
\end{proof}

\bibliographystyle{amsalpha.bst}
\bibliography{/home/benni/cloud/current_work/latex-refs/refs}

\end{document}